\newtheorem{thm}{Theorem}[section]
\newtheorem{ineq}{Inequality}
\newtheorem{lem}[thm]{Lemma}
\newtheorem{conj}[thm]{Conjecture}
\newtheorem{prop}[thm]{Proposition}
\newtheorem{cor}[thm]{Corollary}
\theoremstyle{definition}
\newtheorem{rem}[thm]{Remark}
\newtheorem{example}[thm]{Example}
\numberwithin{equation}{section}
\let\min\relax \DeclareMathOperator*\min{\vphantom{p}min}
\let\max\relax \DeclareMathOperator*\max{\vphantom{p}max}
\begin{document}
\begin{sloppypar}

\title[On the monotonicity of left and right Riemann sums]{On the monotonicity of left and right Riemann sums}

\address{Département de mathématiques et de statistique, Université Laval, 2325 Rue de l'Université, Québec, QC G1V 0A6}

\email{ludovick.bouthat.1@ulaval.ca}

\author{Ludovick Bouthat}

\begin{abstract}
    Riemann sums, a classical method for approximating the definite integral of a function, have been extensively studied in the past. However, their monotonic properties, while still of great importance, particularly in approximation theory and interpolation theory, remain somewhat obscure. This paper is dedicated to proving general theorems about the monotonicity of left and right Riemann sums, a problem first raised by Fejér in 1950. We provide a much-needed review of the literature on the problem and offer several new sufficient and necessary conditions for the monotonicity of Riemann sums. Additionally, we present a new insightful proof of a fundamental theorem related to these sums using tools from the theory of majorization. The author also delves deeper into a question posed by Borwein, almost resolving it completely.
\end{abstract}

\subjclass{26D15, 26A48, 26A42, 41A05}

\keywords{Riemann sums, Monotone sequences, Interpolation in approximation theory}

\maketitle

\vspace{-3pt}
\section{Introduction}

\subsection{Background and context}

\subsubsection*{Some Definitions}

Although they have been used sparingly throughout history, it was in 1854, in Bernhard Riemann's \emph{Habilitationsschrift} (an essay similar to a doctoral thesis), that the idea of the \emph{Riemann sums} was first put on a firm footing \cite{Riemann}. Therein, Riemann defines the \emph{Riemann integral} via the use of these sums and he introduces the concept of a \emph{Riemann integrable function}, which generalized the work of Cauchy who had already defined the integral for continuous functions. 
Recall that, given a bounded function $f:[a,b]\to\mathbb{R}$ over a partition
\vspace{-2pt}
\[
{a = x_0} < x_1 < \dots < x_{n-1} < x_n = b,\vspace{-2pt}
\]
a \emph{Riemann sum of $f$} is a finite sum of the form
\vspace{-4pt}
$$ 
\sum\limits_{k=1}^n f(x_k^*)(x_k - x_{k-1}),\vspace{-4pt}
$$
where each $x_k^*$ is chosen arbitrarily in the subinterval $[x_{k-1}, x_k]$, for each value of $k$. If, no matter which $x_k^*$'s were chosen, the difference between the Riemann sum and the Riemann integral $\int_a^b f(x) \, dx$ approaches zero as $n$ tends to infinity, then $f$ is said to be a Riemann integrable function.

Different choices of $x_k^*$ in the subinterval $[x_{k-1}, x_k]$ yield different Riemann sums of $f$. Of course, some choices are more natural than others. Hence, when $x_k^*$ is chosen to be $x_{k-1}$, the left endpoint of the subinterval $[x_{k-1}, x_k]$, it is customary to speak of the \textit{left Riemann sum of $f$}. As for when $x_k^*$ is set to be $x_k$ for all $k$, we speak of the \textit{right Riemann sum}. These are the main subject of this document.

In what follows, we shall restrict ourselves, without any loss of generality, to Riemann integrable functions defined on the interval $[0,1]$. We shall denote the \textit{left Riemann sum of $f$ with respect to the uniform partition of $[0,1]$} into $n$ intervals of equal width (of size $\frac{1}{n}$) by
$$ 
L_n(f) \,:=\, \frac{1}{n}\sum\limits_{k=0}^{n-1} f\big(\tfrac{k}{n}\big).\smallskip
$$
Similarly, we shall designate the \textit{right Riemann sum of $f$ with respect to the uniform partition of $[0,1]$} by
\begin{equation*}
    R_n(f) \,:=\, \frac{1}{n}\sum\limits_{k=1}^{n} f\big(\tfrac{k}{n}\big).\smallskip
\end{equation*}
To simplify the terminology, we simply say left (resp. right) Riemann sum to refer to $L_n$ (resp. $R_n$) in the following.

\subsubsection*{The Main Problem}

Riemann sums, as a mathematical object in itself, have been the subject of study for a long time. Naturally, as they primarily arise in the context of approximating integrals of functions, most of these efforts have focused on their asymptotic properties. For instance, it is well-known that when a function $f$ is decreasing on the interval $[0,1]$, the left Riemann sum overestimates the integral, while the right Riemann sum underestimates it. 

Nowadays, we have a very complete understanding of the asymptotic properties of Riemann sums. This, along with the rise of the Lebesgue integral beginning in 1904 \cite{Lebesgue} and the development of better numerical methods to approximate integrals, has contributed to a relative decline in the popularity of Riemann sums as an object of study in the last century (although they remain present in schools to teach students about integration). 

However, some important and interesting questions about left and right Riemann sums remain understudied. Among those is a problem which appear to have been initially proposed by Fejér in 1950 \cite{MR41284}. This problem is the main concern of this paper.

\medskip
\noindent\textbf{Main problem} (Fejér): \emph{Find good sufficient conditions on a function $f:[0,1]\to \mathbb{R}$ to ensure the monotonicity of its left and/or right Riemann sums relative to $n$.}

\subsubsection*{Some Applications}

This problem has applications in various areas, including approximation theory, sampling theory, and interpolation. For instance, in \cite{MR3277883}, the equivalence of the approximate sampling theorem and the Euler--Maclaurin summation formula, which can be used to approximate an integral by its Riemann sums, is demonstrated. In \cite{MR4214351}, the author characterizes quasi-Banach interpolation spaces for the couple $(L^p(0,\alpha),L^q(0,\alpha))$ in terms of two monotonicity properties, employing tools from majorization theory, which will be crucial for us in \Cref{sec - maj}. Additionally, in \cite{MR709348}, the authors prove the Shannon sampling theorem, two forms of Poisson's summation formula, and provide error estimates for approximations of the integral of a function $f(t)$ over segments of the real axis as a finite Riemann sum under certain asymptotic properties of $f(t)$. They even present an explicit example for the latter results using the function $f(x):= \frac{1}{1+x^2}$. Unexpectedly, the monotonicity of the Riemann sums of this same function was independently studied in detail, as we shall see in \Cref{sec - f_b1}.

Moreover, while the problem of monotonicity of Riemann sums is interesting in its own right from a theoretical point of view, it also holds applications as a means to derive various inequalities. Indeed, if the right Riemann sums of a function $f$ are monotonically increasing, then we have
\begin{equation}\label{eq - ineq_right}
    \frac{1}{n} \sum_{k=1}^n f\big(\tfrac{k}{n} \big) \,\leq\, \frac{1}{n+1} \sum_{k=1}^{n+1} f\big(\tfrac{k}{n+1} \big)
\end{equation}
for all integer $n\geq 1$. This type of inequality is useful in various contexts. For instance, in \cite{Bouthat}, it is shown that \eqref{eq - ineq_right} holds for $f(x):=\sin^p(\pi x)$ for every $0<p\leq 2$. This somewhat artificial inequality was subsequently used in \cite{Bouthat3} to address a problem at the intersection of matrix theory and metric geometry. Indeed, using this inequality, one can show that the diameter of the $n$-dimensional Birkhoff polytope, i.e., the set of $n\times n$ doubly stochastic matrices \cite{MR0020547}, with respect to the Schatten $p$-norm ($1 \leq p \leq 2)$ verifies
\vspace{-2pt}
\[
\text{diam}_{\mathcal{S}_p}(\mathcal{D}_n) \,=\, 2\!\left(\sum\limits_{k=1}^n \sin^p\!\left(\tfrac{k\pi}{n}\right)\!\right)^{\!\!\frac{1}{p}}\!.
\]

Another example is found in K.\ Jichang's paper from 1999 \cite{Kuang}. Therein, the author considered the functions $f(x)=\ln(x)$ and $f(x)=\ln\big(\frac{x}{1+x}\big)$ and, using \eqref{eq - ineq_right}, managed to provide extensions and refinements of Alzer's inequality \cite{Alzer2}. This inequality, itself a refinement of the Minc--Sathre inequality \cite{MincSathre}, asserts that if $r$ is a positive number and $n$ is a positive integer, then
\vspace{-1pt}
\begin{equation}\label{eq - Minc-Sathre}
    \frac{n}{n+1} \,\leq\, \left( \frac{\frac{1}{n}\sum_{k=1}^n k^r}{\frac{1}{n+1}\sum_{k=1}^{n+1} k^r}\right)^{\!\!\frac{1}{r}} \!\leq\, \frac{\sqrt[n]{n!}}{\sqrt[n+1]{(n+1)!}},
\end{equation}
\vspace{-1pt}
where both bounds are sharp.

\subsubsection*{What Remains to be Done}

In recent years, the problem of the monotonicity of left and right Riemann sums has experienced a resurgence in interest, thanks to the work of Kyrezi \cite{Kyrezi}, Szilárd \cite{Szilard}, Borwein, Borwein \& Sims, and Bouthat, Mashreghi \& Morneau-Guérin \cite{Bouthat}. These recent contributions build upon the foundations laid as far back as 70 years ago by Féjer, Szegö \& Turán \cite{MR41284,MR137818}, van Lint \cite{MR2408409}, Qi \cite{Qi1,Qi2,Chen}, Guo \cite{Qi2}, Chen, Cerone \& Dragomir \cite{Chen}, Jichang \cite{Kuang}, and Bennett \& Jameson \cite{Bennett}. However, the problem remains challenging, and there are still very few known sufficient conditions. This difficulty, as we shall see in \Cref{sec - maj}, has contributed to the scarcity of such conditions in the existing literature. Surprisingly, despite its natural appeal, there are very few papers dedicated to Fejér's problem. Therefore, in this paper, we aim to consolidate all the relevant research on the subject, while also introducing new results. Our objective is to formally present this problem to the mathematical community.

\medskip
\subsection{Survey of previous work}\label{sec - previous}

\subsubsection*{Early work (1950-2000) -- A fundamental theorem on left and right Riemann sums and a generalization}

Inequalities of the form \eqref{eq - ineq_right} (or very similar ones) were used by mathematicians for a long time for specific choices of $f$. However, due to their ubiquity and the lack of coherent terminology for these kinds of inequalities, it is impossible to recount all of these instances. 

Hence, let us focus in this section on general work on the monotonicity of left and right Riemann sums. The first instance of such work appears to be in 1950, when Fejér initially proposed the problem of the monotonicity of Riemann sums in \cite{MR41284}. Therein, the author gave without proofs necessary and sufficient conditions for certain Riemann sums of certain functions $f(t)$ to converge monotonically to $\int_0^1 f(t)dt$. Formal proofs seems to only have come 11 years later in \cite{MR137818} when G.\ Szeg\H{o} \& P.\ Tur\'{a}n showed, among other things, the following theorem.

\begin{thm}\textup{\cite{MR41284,MR137818}}\label{thm - Kuang1}
    Let $f:[0,1]\to \mathbb{R}$.
    \begin{enumerate}
        \item If $f$ is increasing and convex (or concave) on $[0,1]$, then $L_n(f)$ is increasing and $R_n(f)$ is decreasing; 
        \item If $f$ is decreasing and convex (or concave) on $[0,1]$, then $L_n(f)$ is decreasing and $R_n(f)$ is increasing.
    \end{enumerate}
\end{thm}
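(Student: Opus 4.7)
The plan is to establish two dual inequalities for $L_{n+1}(f) - L_n(f)$ that together settle all four monotonicity statements about $L_n(f)$, and then deduce the four corresponding statements for $R_n(f)$ by applying those results to the reflected function $\tilde f(x) := f(1-x)$, which satisfies $L_n(\tilde f) = R_n(f)$ while preserving convexity/concavity but reversing monotonicity.

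The first inequality comes from the easily verified identity
\[
\frac{j}{n} \;=\; \frac{n-j}{n}\cdot \frac{j}{n+1} \,+\, \frac{j}{n}\cdot \frac{j+1}{n+1}, \qquad 0\le j\le n-1.
\]
Applying convexity of $f$ pointwise, summing over $j$, and collecting the coefficient of each $f(k/(n+1))$ yields, after a short computation,
\[
n^2\bigl(L_{n+1}(f)-L_n(f)\bigr) \;\ge\; L_{n+1}(f) - f(0),
\]
with the reverse inequality when $f$ is concave. If $f$ is increasing then every sample $f(k/(n+1))$ is at least $f(0)$, so $L_{n+1}(f)\ge f(0)$; if $f$ is decreasing, the reverse holds. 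These two observations settle the cases ``convex and increasing $\Rightarrow L_n$ increasing'' and ``concave and decreasing $\Rightarrow L_n$ decreasing''.

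The remaining two cases rest on a second identity,
\[
\frac{k}{n+1} \;=\; \frac{k}{n+1}\cdot \frac{k-1}{n} \,+\, \frac{n+1-k}{n+1}\cdot \frac{k}{n}, \qquad 1\le k\le n-1,
\]
valid only in the interior. Convexity (or concavity) applied termwise, together with equality at $k=0$, controls the samples $f(k/(n+1))$ for $0\le k\le n-1$ by values on the $L_n$-grid. The main obstacle is the endpoint sample $f(n/(n+1))$: since $n/(n+1)>(n-1)/n$, it lies outside the $L_n$-grid and cannot be estimated via convexity alone. Here monotonicity of $f$ becomes indispensable, providing $f(n/(n+1))\ge f((n-1)/n)$ when $f$ is increasing and the reverse inequality when decreasing. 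Combining both contributions and simplifying produces
\[
(n+1)^2\bigl(L_{n+1}(f)-L_n(f)\bigr) \;\ge\; f\bigl(\tfrac{n-1}{n}\bigr) - L_n(f)
\]
in the concave case, and the reverse in the convex case. Since $f((n-1)/n)$ is the maximum (resp.\ minimum) of the $n$ samples defining $L_n(f)$ when $f$ is increasing (resp.\ decreasing), the right-hand side has the required sign, finishing the cases ``concave and increasing'' and ``convex and decreasing''.
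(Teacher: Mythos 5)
Your proof is correct: I checked both convex-combination identities, the coefficient bookkeeping (the first identity yields $n^2(L_{n+1}-L_n)\ge L_{n+1}-f(0)$ for convex $f$, the second yields $(n+1)^2(L_{n+1}-L_n)\ge f(\tfrac{n-1}{n})-L_n$ for concave $f$, with reversed inequalities in the dual cases), and the reflection $\tilde f(x)=f(1-x)$ does transport all four $L_n$ statements to the four $R_n$ statements. However, your route is genuinely different from the one taken in the paper. The paper deliberately avoids the elementary convexity argument and instead proves a weak supermajorization relation $x\prec^w y$ between the vector of $R_n$-nodes (each repeated $n+1$ times) and the vector of $R_{n+1}$-nodes (each repeated $n$ times), then invokes the Hardy--Littlewood--P\'olya--type equivalence stating that $\sum_j\phi(x_j)\le\sum_j\phi(y_j)$ for \emph{all} decreasing convex $\phi$ if and only if $x\prec^w y$. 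What the paper's approach buys is conceptual: since that characterization is an equivalence, it shows the class of monotone convex (or concave) functions is exactly the class naturally attached to this comparison of grids, which is the paper's explanation for why further sufficient conditions are hard to find. What your approach buys is self-containedness and slightly more quantitative information: your two displayed inequalities are strict refinements of bare monotonicity (they control the size of the increment $L_{n+1}-L_n$, in the spirit of \Cref{thm - concave+}), and the argument is essentially the classical one found in the earlier independent proofs cited around \Cref{thm - Kuang1}. One cosmetic remark: your second identity is algebraically valid for all $k$; the restriction $1\le k\le n-1$ is needed only so that both nodes $\tfrac{k-1}{n}$ and $\tfrac{k}{n}$ lie on the $L_n$-grid, and you correctly isolate $k=n$ as the endpoint term requiring monotonicity of $f$.
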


\begin{rem}
\vspace{-1.5pt}
    Restricting ourselves to the functions $R_{2^n}(f)$ and $L_{2^n}(f)$, it is easy to see geometrically that it suffices for $f$ to be increasing for $R_{2^n}(f)$ to be decreasing and $L_{2^n}(f)$ to be increasing (the opposite being true if $f$ is decreasing). However, this is no longer true in the case of a general $n$. To see this, it suffices to consider the function $f(x):=\mathbf{1}_{[\frac{1}{2},1]}(x)$.
    \vspace{-5.5pt}
\end{rem}

\Cref{thm - Kuang1} is in many ways the fundamental theorem at the basis of our problem. Note that, due to its simplicity, the theorem admits several different formulations. These divergence in terminology has hindered the clear identification of the problem and, as a result, the literature review. Hence why this theorem has been independently proven on at least five distinct occasions over the years. In 1975, J.\ H.\ van Lint used the theorem to give a solution to J.\ van de Lune's \emph{Problem 399} \cite[p.\,565]{MR2408409}. In 1999, K.\ Jichang used the result to refine Minc--Sathre's inequality \eqref{eq - Minc-Sathre}. Subsequently, it was proven at least two more times in 2010 by I.\ Kyrezi \cite{Kyrezi} and in 2012 by A.\ Szilárd \cite{Szilard}. 

Moreover, \Cref{thm - Kuang1} was also slightly generalized on different occasions, notably by F.\ Qi in 2000 \cite{Qi1} and by I.\ Gavrea in 2008 \cite{Gavrea}. We present in the following theorem the result of the first author. Note that it was initially shown in a somewhat less general context, but we have chosen to generalize it here.

\vspace{-4pt}
\begin{thm}\textup{\cite{Qi1}}
    Let $f:[0,1]\to \mathbb{R}$ and $k$ be a nonnegative integer.
    \begin{enumerate}
    \itemsep0.15em
        \item If $f$ is increasing and convex (or concave) on $[0,1]$, then the sequence $\frac{1}{n} \sum_{i=k+1}^{n+k} f(\frac{i}{n+k})$ is decreasing relative to both $n$ and $k$ and the sequence $\frac{1}{n} \sum_{i=0}^{n-1} f(\frac{i}{n+k})$ is increasing relative to both $n$ and $k$;
        \item If $f$ is decreasing and convex (or concave) on $[0,1]$, then the sequence $\frac{1}{n} \sum_{i=k+1}^{n+k} f(\frac{i}{n+k})$ is increasing relative to both $n$ and $k$ and the sequence $\frac{1}{n} \sum_{i=0}^{n-1} f(\frac{i}{n+k})$ is decreasing relative to both $n$ and $k$.
    \end{enumerate}
    \vspace{-5pt}
\end{thm}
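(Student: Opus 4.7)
The plan is to adapt the convex-combination argument underlying \Cref{thm - Kuang1} to this parametrized setting. As a preliminary observation, via the increasing affine bijection $\phi_{n,k}(t) := \tfrac{k+nt}{n+k}$ of $[0,1]$ onto $[k/(n+k),1]$, one has $A_{n,k}(f) = R_n(f\circ\phi_{n,k})$, and an analogous identity holds for $B_{n,k}(f)$. Because $\phi_{n,k}$ is an increasing affine map, the composition $f\circ\phi_{n,k}$ inherits the monotonicity and the convexity/concavity of $f$; this heuristically explains why the conclusions of \Cref{thm - Kuang1} should persist here. A direct appeal is nonetheless forbidden, since $\phi_{n,k}$ itself depends on the indices $n$ and $k$.

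For the monotonicity in $n$ with $k$ fixed, I would therefore carry out the argument by hand, starting from the convex combination
\[
\frac{j}{n+k+1} \,=\, \frac{j}{n+k+1}\cdot\frac{j-1}{n+k} \,+\, \frac{n+k+1-j}{n+k+1}\cdot\frac{j}{n+k}, \qquad j = k+1,\ldots,n+k+1,
\]
in which the out-of-range value $f((n+k+1)/(n+k))$ appears with coefficient zero and is thereby suppressed. Applying Jensen's inequality for convex $f$ and summing over $j$ would yield, after reindexing, a bound of the form
\[
(n+1)A_{n+1,k}(f) \,\leq\, \frac{n(n+k+2)}{n+k+1}\,A_{n,k}(f) \,+\, \frac{k+1}{n+k+1}\,f\!\left(\tfrac{k}{n+k}\right),
\]
and a short algebraic manipulation should reduce the desired inequality $A_{n+1,k}(f) \leq A_{n,k}(f)$ to the endpoint comparison $f(k/(n+k)) \leq A_{n,k}(f)$, which is immediate from $f$ being increasing. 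The concave case is handled by reversing the Jensen inequality. For the monotonicity in $k$ with $n$ fixed, I would apply the complementary convex combination $\tfrac{i+1}{n+k+1} = \tfrac{i+1}{n+k+1}\cdot\tfrac{i}{n+k}+\tfrac{n+k-i}{n+k+1}\cdot\tfrac{i+1}{n+k}$ and repeat the argument, with the roles of the two running indices interchanged.

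Finally, the statements for $B_{n,k}(f)$ and for decreasing $f$ follow by exploiting the symmetry $\check f(x) := f(1-x)$, under which a short reindexation yields $B_{n,k}(\check f) = A_{n,k}(f)$ while swapping increasing versus decreasing behaviour and preserving convexity/concavity. All four sign combinations thereby reduce to the one already handled. The main obstacle I foresee is the combinatorial bookkeeping: each sign combination (increasing/decreasing $\times$ convex/concave) requires selecting both the right convex combination and the right direction of Jensen's inequality, and in every case the residual estimate must be checked to reduce to an endpoint monotonicity of $f$. The fortuitous vanishing of the boundary coefficients, which keeps $f$ from being evaluated outside $[0,1]$, is precisely the combinatorial coincidence that makes the whole scheme go through.
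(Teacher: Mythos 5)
A preliminary caveat: the paper itself offers no proof of this theorem --- it is quoted from \cite{Qi1} as part of the literature survey, in a form the author says has been ``generalized'' from the original --- so your proposal can only be judged on its own terms. Your treatment of the monotonicity in $n$ for increasing \emph{convex} $f$ is correct: the convex combination is valid, the boundary coefficient vanishes as you say, and collecting coefficients does reduce $A_{n+1,k}(f)\le A_{n,k}(f)$ to $f\bigl(\tfrac{k}{n+k}\bigr)\le A_{n,k}(f)$. The serious problem is the monotonicity in $k$: that half of the statement is simply false, so no scheme can prove it. Take $f(x)=x$, which is increasing and both convex and concave; then
\[
\frac{1}{n}\sum_{i=k+1}^{n+k}\frac{i}{n+k} \,=\, \frac{n+2k+1}{2(n+k)}
\qquad\text{and}\qquad
\frac{1}{n}\sum_{i=0}^{n-1}\frac{i}{n+k} \,=\, \frac{n-1}{2(n+k)},
\]
and for $n\ge 2$ the first is strictly \emph{increasing} in $k$ (e.g.\ $\tfrac34,\tfrac56,\dots$ for $n=2$) while the second is strictly \emph{decreasing} in $k$ --- the opposite of what is claimed. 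Heuristically, raising $k$ pushes the sample points of the first sum to the right and those of the second to the left. Your own machinery detects this: running your ``complementary'' combination over $j=k+2,\dots,n+k+1$ and collecting coefficients gives $nA_{n,k+1}(f)\le \tfrac{n(n+k+2)}{n+k+1}A_{n,k}(f)-\tfrac{n}{n+k+1}f\bigl(\tfrac{k+1}{n+k}\bigr)$, and the residual inequality needed to conclude is $A_{n,k}(f)\le f\bigl(\tfrac{k+1}{n+k}\bigr)$ --- that the average be dominated by the \emph{smallest} sampled value, which is exactly backwards for increasing $f$. The $k$-statements should be corrected (reversed), not proved.

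A second, repairable gap is the sentence ``the concave case is handled by reversing the Jensen inequality.'' Reversing Jensen in your displayed combination produces a \emph{lower} bound on $(n+1)A_{n+1,k}(f)$, which cannot yield $A_{n+1,k}(f)\le A_{n,k}(f)$. For concave increasing $f$ one must use the dual decomposition, expressing each coarse node as a convex combination of two fine nodes,
\[
\frac{i}{n+k} \,=\, \frac{n+k-i}{n+k}\cdot\frac{i}{n+k+1}\,+\,\frac{i}{n+k}\cdot\frac{i+1}{n+k+1},
\]
applying Jensen upward and summing over $i=k+1,\dots,n+k$; the residual inequality then reads $(k-1)A_{n+1,k}(f)\ge k\,f\bigl(\tfrac{k+1}{n+k+1}\bigr)-f(1)$, which does follow from $f\bigl(\tfrac{k+1}{n+k+1}\bigr)\le A_{n+1,k}(f)\le f(1)$. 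You do acknowledge in closing that each case needs its own combination, but as written the plan assigns the dual combination to the $k$-step (where nothing can work) rather than to the concave case (where it is actually required).
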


\subsubsection*{Modern work (2000-2015) -- More generalizations on the monotonicity of Riemann sums}

This generalization was then further improved by Chen, Qi, Cerone \& Dragomir in 2003 \cite{Chen} to obtain two results, which were themselves slightly improved in 2006 by Qi \& Guo \cite{Qi2}. We combine the contributions of both papers in the following theorem.

\vspace{-4pt}
\begin{thm}\textup{\cite{Qi2,Chen}}\label{thm - weird}
    Let $f:[0,1]\to\mathbb{R}$ be an increasing function and $\varphi:(0,\infty)\to\mathbb{R}$ a positive increasing function. Define $a_0=0$, $a_k:=\varphi(k)$ $(k\geq 1)$, $A_k:=1-\frac{a_k}{a_{k+1}}$ and $B_k:=\frac{a_{k+1}}{a_k}-1$.
    \begin{enumerate}
    \itemsep0.15em
        \item If $f$ is convex (resp. concave) on $[0,1]$ and if the sequence $kA_k$ (resp. $kB_k$) is increasing, then $ \frac{1}{n} \sum_{k=1}^n f\big(\frac{a_k}{a_n}\big)$ is decreasing relative to $n$.
        \item If $f$ is convex (resp. concave) on $[0,1]$ and if the sequence $kB_k$ (resp. $kA_k$) is decreasing, then $ \frac{1}{n} \sum_{k=0}^{n-1} f\big(\frac{a_k}{a_n}\big)$ is increasing relative to $n$.
    \end{enumerate}
    If, moreover, $f$ is nonnegative, then
    \begin{enumerate}
    \itemsep0.15em
        \item[3.] If $f$ is convex (or concave) on $[0,1]$, if $\varphi$ is convex and if the sequence $a_kA_k$ is increasing, then $ \frac{1}{a_n} \sum_{k=1}^n f\big(\frac{a_k}{a_n}\big)$ is decreasing relative to $n$;
        \item[4.] If $f$ is convex (resp. concave) on $[0,1]$ and if the sequence $ a_{k+1}B_k $ (resp. $a_kA_k$) is decreasing, then $ \frac{1}{a_n} \sum_{k=0}^{n-1} f\big(\frac{a_k}{a_n}\big)$ is increasing relative to $n$.
    \end{enumerate}
    \vspace{-4pt}
\end{thm}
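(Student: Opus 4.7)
The plan is to establish each of the four monotonicity statements by computing the first difference of the relevant sequence and verifying that it has the expected sign, using convexity of $f$ in conjunction with the hypothesized monotonicity of the auxiliary sequence. I would focus on part~1 with $f$ convex, as the other parts follow by analogous (dual) arguments.

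For part~1 (convex case), set $T_n := \tfrac{1}{n}\sum_{k=1}^n f(a_k/a_n)$. A routine expansion of $n(n+1)(T_n - T_{n+1})$ reduces the claim $T_n \geq T_{n+1}$ to the inequality
\[
\sum_{k=1}^n \bigl[f(1) - f(a_k/a_n)\bigr] \;\leq\; n\sum_{k=1}^n \bigl[f(a_k/a_n) - f(a_k/a_{n+1})\bigr].
\]
Both sides are non-negative (since $f$ is increasing and $a_n \leq a_{n+1}$), but it is not obvious that convexity alone forces the RHS to dominate. Indeed, the standard three-point convexity inequality applied with $x=a_k/a_{n+1}$, $y=a_k/a_n$, $z=1$ yields an upper bound on each summand of the RHS in terms of the corresponding summand of the LHS, which is the wrong direction for a term-by-term estimate. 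The way out is to apply Abel summation on both sides, rewriting each as a weighted sum of the first-order differences $d_j^{(m)} := f(a_{j+1}/a_m) - f(a_j/a_m)$. The telescoping identity $f(1) - f(a_k/a_n) = \sum_{j=k}^{n-1} d_j^{(n)}$, followed by an interchange of the order of summation, transforms the LHS into $\sum_{j=1}^{n-1} j\, d_j^{(n)}$. An analogous (but more delicate) regrouping of the RHS, which exploits the interleaving of the partitions $\{a_j/a_n\}$ and $\{a_j/a_{n+1}\}$, produces a comparable weighted sum. Convexity now enters globally through the monotonicity in $j$ of the divided differences $d_j^{(m)}\, a_m/(a_{j+1}-a_j)$, and the hypothesis that $k A_k$ is non-decreasing provides the precise inequality needed to compare the resulting coefficients.

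For the concave case of part~1 and for parts 2--4, the same blueprint applies with natural modifications. Replacing $f$ by $-f$ reverses the direction of all convexity inequalities, which, together with the identity $A_k = B_k/(1+B_k)$, explains the switch of hypothesis from $kA_k$ to $kB_k$. Part~2 interchanges the summation range from $\{1,\ldots,n\}$ to $\{0,\ldots,n-1\}$, substituting $f(0)$ for $f(1)$ as the boundary value and again swapping the roles of $A_k$ and $B_k$. Parts~3 and~4 re-weight by $1/a_n$ rather than $1/n$, which inserts factors of $a_k$ or $a_{k+1}$ in the Abel-summed expressions; the additional hypotheses (non-negativity of $f$ throughout, and convexity of $\varphi$ in part~3) ensure the correct sign for the boundary correction terms introduced by this reweighting.

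The main obstacle is the Abel-summation bookkeeping: rewriting the RHS as a weighted sum along a single partition is delicate, because the partitions $\{a_j/a_n\}$ and $\{a_j/a_{n+1}\}$ interleave in a way that depends on the growth rate of $\varphi$. Arranging this interleaving so that the resulting coefficient comparison is controlled by precisely the monotonicity of $kA_k$ (and its variants in the other parts) requires careful index manipulation, which presumably explains both the somewhat opaque form of the hypothesis and why the theorem has undergone successive refinement in the papers of Qi, Qi \& Guo, and Chen, Qi, Cerone \& Dragomir.
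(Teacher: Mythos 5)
This theorem is quoted in the paper's survey section and attributed to \cite{Qi2,Chen}; the paper itself contains no proof of it, so your argument has to stand entirely on its own. As written, it does not: it is a plan whose hardest step is described rather than executed. Your reduction of part 1 (convex case) to the displayed inequality is correct, and so is the Abel-summation identity $\sum_{k=1}^n\bigl[f(1)-f(a_k/a_n)\bigr]=\sum_{j=1}^{n-1}j\,d_j^{(n)}$, but the entire content of the theorem lives in the ``analogous (but more delicate) regrouping of the RHS.'' The points $a_k/a_{n+1}$ do not lie on the partition $\{a_j/a_n\}$, so the right-hand side is not a weighted sum of the same increments $d_j^{(n)}$; turning it into one requires inserting intermediate nodes and controlling the resulting secant slopes, and it is exactly at that point that the hypothesis ``$kA_k$ increasing'' must be invoked in a precise quantitative way. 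You acknowledge this (``requires careful index manipulation, which presumably explains\dots'') but never carry it out, so the proof of even the one case you focus on is incomplete.

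There is also a concrete error in how you dispose of the remaining cases. The concave half of part 1 cannot be obtained from the convex half by replacing $f$ with $-f$: the theorem's standing hypothesis is that $f$ is \emph{increasing}, and $-f$ is decreasing, so the convex-case statement simply does not apply to $-f$; moreover negation would flip the direction of the conclusion (decreasing $\mapsto$ increasing), whereas the concave case of part 1 asserts the \emph{same} conclusion (the sum is decreasing) under the different hypothesis ``$kB_k$ increasing.'' That the hypothesis genuinely changes ($kA_k$ versus $kB_k$, which are not equivalent conditions even though $A_k=B_k/(1+B_k)$) already signals that this is a separate estimate, not a formal dual. Similarly, in parts 3 and 4 you assert that nonnegativity of $f$ and convexity of $\varphi$ ``ensure the correct sign for the boundary correction terms'' without identifying those terms or showing where convexity of $\varphi$ enters; since part 3 is the only statement in the theorem that needs convexity of $\varphi$ at all, this is precisely the place where a proof must do real work.
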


Obviously, by considering $-f$ instead of $f$, similar statements can be obtained for decreasing functions in the above. However, due to the somewhat cumbersome nature of the result, we omit these cases. 

\smallskip

Meanwhile, G.\ Bennett and G.\ Jameson \cite{Bennett} published a paper in 2000 in which the authors study the monotonicity of various averages of a convex or concave function at $n$ equally spaced points. In particular, they proved statements concerning
\medskip
\begin{enumerate}
    \itemsep0.4em
    \item the \emph{central Riemann sum} $C_n(f) := \frac{1}{n-1} \sum_{k=1}^{n-1} f\big( \tfrac{k}{n}\big)$,
    \item the \emph{bilateral Riemann sum} $B_n(f) := \frac{1}{n+1} \sum_{k=0}^n f\big( \tfrac{k}{n}\big)$,
    \item the \emph{mid-point Riemann sum} $M_n(f):= \frac{1}{n} \sum_{k=1}^n f\big( \frac{2k-1}{2n} \big)$,
    \item the \emph{trapezium Riemann sum} $T_n(f):=\frac{f(0)+f(1)}{2n}+ \frac{n-1}{n} C_n(f)$.
\end{enumerate} 
\medskip
Note that none of these Riemann sums can be derived by choosing appropriate values for $a_k$ in \Cref{thm - weird}. Consequently, the following theorem is entirely independent of the aforementioned one.

\begin{thm}\textup{\cite{Bennett}}\label{thm - bilateral}
    Let $f:[0,1]\to\mathbb{R}$.
    \begin{enumerate}
        \item If $f$ is convex, then $C_n(f)$ is increasing and $B_n(f)$ is decreasing;
        \item If $f$ is concave, then $C_n(f)$ is decreasing and $B_n(f)$ is increasing.
    \end{enumerate}
    Moreover, if $f'$ is either convex or concave and
    \begin{enumerate}
        \item If $f$ is convex, then $M_n(f)$ is increasing and $T_n(f)$ is decreasing;
        \item If $f$ is concave, then $M_n(f)$ is decreasing and $T_n(f)$ is increasing.
    \end{enumerate}
\end{thm}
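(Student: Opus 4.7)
The plan is to split the theorem into two regimes: $C_n$ and $B_n$ need only convexity and yield to a direct convex-combination argument, while $M_n$ and $T_n$, whose underlying grids share no common refinement across successive $n$, require the extra hypothesis on $f'$ and seem to demand an integration-by-parts representation.

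For $C_n$ and $B_n$ the key observation is that each interior node of one partition is a convex combination of two nodes of the other. For $C_n$ and $k \in \{1, \ldots, n-1\}$ one has
\[
\frac{k}{n} \,=\, \frac{n-k}{n} \cdot \frac{k}{n+1} \,+\, \frac{k}{n} \cdot \frac{k+1}{n+1},
\]
so convexity yields $n f(k/n) \leq (n-k) f(k/(n+1)) + k f((k+1)/(n+1))$. Summing over $k$ and reindexing, each $f(j/(n+1))$ for $1 \leq j \leq n$ picks up the same total weight $n-1$, and the resulting inequality is exactly $C_n(f) \leq C_{n+1}(f)$. For $B_n$, the dual identity
\[
\frac{k}{n+1} \,=\, \frac{k}{n+1}\cdot\frac{k-1}{n} \,+\, \frac{n+1-k}{n+1}\cdot\frac{k}{n}, \qquad k = 1, \ldots, n,
\]
gives $(n+1) f(k/(n+1)) \leq k f((k-1)/n) + (n+1-k) f(k/n)$; after summation, each endpoint $f(0)$ and $f(1)$ receives weight $1$ on the right while every interior $f(k/n)$ receives weight $n+2$, from which $B_{n+1}(f) \leq B_n(f)$ follows. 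The concave cases are handled by applying the convex case to $-f$.

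For $M_n$ and $T_n$, since a midpoint $(2k-1)/(2n)$ cannot be written as a convex combination of midpoints $(2j-1)/(2n+2)$, the plan is instead to use summation by parts twice to write
\[
T_n(f) - T_{n+1}(f) \,=\, \int_0^1 f''(x)\,K_n(x)\,dx
\]
and analogously for $M_{n+1}(f) - M_n(f)$; this is legitimate because both $T_n$ and $M_n$ integrate affine functions exactly (a short direct check), so the zeroth and first moments of the difference measure vanish and a double antidifferentiation is available. Alternatively, one may exploit the identities $T_n = \tfrac{1}{2}(L_n + R_n)$ and $M_n = 2 L_{2n} - L_n$, integrate by parts once to transfer the question to integrals of $f'$ against signed measures, and apply \Cref{thm - Kuang1} to the monotone function $f'$. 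The principal obstacle in either route is the sign analysis of $K_n$: one must show that $K_n$ has one definite sign on $[0,1]$ for $T_n$, the opposite sign for $M_n$, and that this sign interacts correctly with the monotonicity of $f''$ furnished by the hypothesis that $f'$ is convex or concave. A useful internal consistency check is that $M_n$ and $T_n$ must move in opposite directions, mirroring the classical bracket $M_n(f) \leq \int_0^1 f \leq T_n(f)$ valid for convex $f$.
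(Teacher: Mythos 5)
The paper itself offers no proof of this statement: it is quoted from Bennett and Jameson \cite{Bennett} as part of the literature survey, so there is nothing internal to compare against. Judged on its own terms, the first half of your proposal is a complete and correct proof. Both convex-combination identities check out ($\frac{n-k}{n}\cdot\frac{k}{n+1}+\frac{k}{n}\cdot\frac{k+1}{n+1}=\frac{k}{n}$ and $\frac{k}{n+1}\cdot\frac{k-1}{n}+\frac{n+1-k}{n+1}\cdot\frac{k}{n}=\frac{k}{n+1}$), the weight bookkeeping after summation is right (for $B_n$ one adds $(n+1)(f(0)+f(1))$ to both sides to turn the weights $1$ on the endpoints into $n+2$ uniformly), and the concave cases follow by passing to $-f$. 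This is exactly the kind of elementary argument the result admits.

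The second half, however, is a plan rather than a proof, and the plan as primarily stated would not work. The standard Peano-kernel representation gives $T_n(f)-\int_0^1 f=\int_0^1 f''\Phi_n$ with $\Phi_n\geq 0$, so $T_n(f)-T_{n+1}(f)=\int_0^1 f''(\Phi_n-\Phi_{n+1})$; but $\Phi_n-\Phi_{n+1}$ is a difference of two nonnegative bump trains of incommensurable periods and is \emph{not} of one sign. Indeed, if your kernel $K_n$ had a definite sign, pairing it with $f''\geq 0$ would prove the $M_n$ and $T_n$ statements from convexity alone, making the hypothesis that $f'$ is convex or concave superfluous --- a strong signal that this is the wrong mechanism. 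The correct mechanism must genuinely use the monotonicity of $f''$ (equivalently, a further integration/summation by parts against a kernel whose \emph{primitive} has controlled sign, or an Abel-summation argument on the alternating weights), and this is precisely the step you flag as "the principal obstacle" and leave undone. The alternative route via $T_n=\tfrac12(L_n+R_n)$, $M_n=2L_{2n}-L_n$ (both identities are correct) and an appeal to \Cref{thm - Kuang1} applied to $f'$ is also not worked out: $T_{n+1}(f)-T_n(f)$ is not a difference of Riemann sums of $f'$ in any form to which that theorem applies. So the $C_n$, $B_n$ claims are fully proved, but the $M_n$, $T_n$ claims are not.
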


However, while the paper presents interesting ideas and insights, it also contains minor errors. Notably, the authors asserted that \emph{if $f$ is convex or concave on $[0,1]$ (with no conditions on the monotonicity of the function), then $L_n(f)$ is increasing and $R_n(f)$ is decreasing}, which contradicts \Cref{thm - Kuang1}. To illustrate, one can consider the function $f(x)=1-x$ as a counterexample. It is worth mentioning that J.\ Rooin and H.\ Dehghan \cite{Rooin} also proved parts of the above theorem again in 2015, providing new refinements of the Hermite--Hadamard inequality and the classical Alzer's inequality in the process.
\medskip

\subsubsection*{Borwein's work (2015-2023) -- New improvements to the fundamental theorem}

Let us now go back to results concerning the monotonicity of $R_n$ and $L_n$. As we have seen, in the 65 years following the introduction of \Cref{thm - Kuang1} in the literature, the result was generalized in various directions. However, it was not directly improved as no new sufficient conditions were given to ensure the monotonicity of the left and right Riemann sums of $f$. 

It was only recently, in 2020, that D.\ Borwein, J.\ M.\ Borwein and B.\ Sims improved on the theorem by giving several new sufficient conditions to guarantee that the left and right Riemann sums are monotonic in nature \cite{Borwein}. We present these results here, starting with the following theorem.
\begin{thm}\textup{\cite{Borwein}}
    Let $f:[0,1]\to\mathbb{R}$ and let $c\in[0,1]$.
    \begin{enumerate}
        \item If $f$ is convex on the interval $[0,c]$, concave on $[c,1]$ and decreasing on $[0,1]$, then $L_n(f)$ is decreasing and $R_n(f)$ is increasing;
        \item If $f$ is concave on the interval $[0,c]$, convex on $[c,1]$ and increasing on $[0,1]$, then $L_n(f)$ is increasing and $R_n(f)$ is decreasing.
    \end{enumerate}
\end{thm}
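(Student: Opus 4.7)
The plan is to derive statement (2) from (1) by a sign change, and then prove (1) by decomposing $f$ as a difference $f = \phi - \chi$ of two functions each of which falls under \Cref{thm - Kuang1}. The reduction is immediate: if $f$ satisfies the hypotheses of (2), then $g := -f$ satisfies those of (1), and since $L_n(g) = -L_n(f)$ and $R_n(g) = -R_n(f)$, applying (1) to $g$ yields (2). It therefore suffices to prove (1).

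For (1), set $m := \max\{f'_-(c),\, f'_+(c)\}$, where the one-sided derivatives exist at $c$ because $f$ is convex on $[0,c]$ and concave on $[c,1]$, and both are nonpositive because $f$ is decreasing. Define
\[
    \phi(x) := \begin{cases} f(x), & x \in [0,c], \\ f(c) + m(x-c), & x \in [c,1], \end{cases}
\]
and $\chi := \phi - f$. Geometrically, $\phi$ replaces $f$ on $[c,1]$ by its linear extension of slope $m$ emanating from $(c,f(c))$, while $\chi$ measures the deficit of $f$ below this extension: it vanishes identically on $[0,c]$, and on $[c,1]$ it is nonnegative, since the concavity of $f$ combined with $m \geq f'_+(c)$ forces $f(x) \leq f(c) + m(x-c)$.

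The core step is verifying that both summands fit \Cref{thm - Kuang1}. The function $\phi$ is decreasing on $[0,1]$ (both pieces are, as $m \leq 0$) and convex on $[0,1]$: it is convex on $[0,c]$, linear on $[c,1]$, and the left slope $f'_-(c)$ at $c$ is at most the right slope $m$. Symmetrically, $\chi$ is non-decreasing on $[0,1]$ (identically zero on $[0,c]$, and on $[c,1]$ we have $\chi'(x) = m - f'(x) \geq m - f'_+(c) \geq 0$ by the concavity of $f$) and convex on $[0,1]$ (zero then convex, with left slope $0$ and right slope $m - f'_+(c) \geq 0$ at $c$).

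Applying \Cref{thm - Kuang1} to $\phi$ (decreasing convex) gives that $L_n(\phi)$ is decreasing and $R_n(\phi)$ is increasing, while applying it to $\chi$ (non-decreasing convex) gives that $L_n(\chi)$ is increasing and $R_n(\chi)$ is decreasing. Since $f = \phi - \chi$ and the Riemann sums are linear in $f$, we conclude that $L_n(f) = L_n(\phi) - L_n(\chi)$ is the difference of a decreasing and an increasing sequence (hence decreasing), while $R_n(f) = R_n(\phi) - R_n(\chi)$ is the difference of an increasing and a decreasing sequence (hence increasing), exactly as claimed. I expect the main obstacle to lie in identifying this decomposition in the first place; once found, the verification is elementary, apart from the delicate choice at the kink, where $m$ must be the \emph{larger} of the two one-sided derivatives so that neither $\phi$ nor $\chi$ loses convexity at $c$. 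Should \Cref{thm - Kuang1} be read with strictly monotone hypotheses, a perturbation $\chi \leadsto \chi(x) + \varepsilon x^2$ followed by $\varepsilon \to 0^+$ handles the edge case in which $\chi$ is merely non-decreasing.
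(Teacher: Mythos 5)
Your argument is correct, and it is worth noting at the outset that the paper itself offers no proof of this statement: it is quoted verbatim from Borwein, Borwein \& Sims, so there is no in-paper argument to compare against. Judged on its own, your decomposition $f=\phi-\chi$ is sound: the one-sided derivatives at $c$ exist and are finite and nonpositive (the difference quotients are monotone by convexity/concavity and bounded above by $0$ by the decrease of $f$), $f$ is automatically continuous at $c$ (monotonicity and one-sided convexity force the two one-sided limits to equal $f(c)$), and your choice $m=\max\{f'_-(c),f'_+(c)\}$ is exactly what is needed so that the right-derivatives of both $\phi$ and $\chi$ are nondecreasing across the kink; taking $m=0$ would work just as well and slightly simplifies the verification. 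The two pieces then fall under the two cases of \Cref{thm - Kuang1}, and linearity of $L_n$ and $R_n$ finishes the job. This is the same ``split off a piece handled by the fundamental theorem'' strategy that the paper itself deploys in the proof of \Cref{thm - improvement1} (where $f=g+h$ with $g$ concave decreasing), so your route is entirely in the spirit of the surrounding material. The only caveat is the reading of ``increasing'' in \Cref{thm - Kuang1}: the paper uses it in the weak sense (as its own applications to constant-on-a-subinterval functions make clear), so your closing $\varepsilon$-perturbation is unnecessary, though harmless.
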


The following theorem is useful since it does not depend on the monotonicity of the function $f$. However, it alone is insufficient to demonstrate the monotonicity of the Riemann sums of $f$, as an additional factor of $n^{-1}$ must be included.

\begin{thm}\textup{\cite{Borwein}}\label{thm - concave+}
    Let $f:[0,1]\to\mathbb{R}$ and let $c\in[0,1]$.
    \begin{enumerate}
        \item If $\hfill$the $\hfill$function$\hfill$ $f$ $\hfill$is $\hfill$concave $\hfill$on$\hfill$ $[0,1]$ $\hfill$with $\hfill$a $\hfill$maximum $\hfill$of$\hfill$ $f(c)$, $\hfill$then$\hfill$ the $\hfill$function $R_n(f)-\frac{f(c)-f(0)}{n}=L_n(f)-\frac{f(c)-f(1)}{n}$ is increasing;
        \item If $\hfill$the $\hfill$function$\hfill$ $f$ $\hfill$is $\hfill$convex$\hfill$ on$\hfill$ $[0,1]$ $\hfill$with $\hfill$a minimum$\hfill$ of$\hfill$ $f(c)$, $\hfill$then $\hfill$the $\hfill$ function \newline $R_n(f)-\frac{f(c)-f(0)}{n}=L_n(f)-\frac{f(c)-f(1)}{n}$ is decreasing.
    \end{enumerate}
\end{thm}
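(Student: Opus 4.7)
My plan is to handle both parts uniformly by focusing on the common quantity
\[
S_n(f) \,:=\, R_n(f)-\tfrac{f(c)-f(0)}{n} \,=\, L_n(f)-\tfrac{f(c)-f(1)}{n},
\]
where the equality of the two expressions is an instant consequence of $R_n(f)-L_n(f)=\tfrac{f(1)-f(0)}{n}$. A short manipulation yields the clean identity
\[
n\,S_n(f)\,=\,\sum_{k=0}^{n} f\!\big(\tfrac{k}{n}\big)\,-\,f(c),
\]
so part (2) will reduce to part (1) via $f\mapsto -f$ (note that $-f$ is concave with maximum $-f(c)$ at $c$, and $S_n(-f) = -S_n(f)$). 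Writing $A:=\sum_{j=0}^{n+1}f\!\big(\tfrac{j}{n+1}\big)$ and $B:=\sum_{k=0}^{n}f\!\big(\tfrac{k}{n}\big)$ and placing everything over a common denominator gives
\[
n(n+1)\,[S_{n+1}(f)-S_n(f)]\,=\,nA-(n+1)B+f(c),
\]
so part (1) amounts to showing that this quantity is nonnegative whenever $f$ is concave with maximum $f(c)$.

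The crux is a pointwise concavity estimate relating the two grids. The key observation is that, for $1\leq j\leq n$, the point $\tfrac{j}{n+1}$ lies between $\tfrac{j-1}{n}$ and $\tfrac{j}{n}$; more precisely,
\[
\tfrac{j}{n+1}\,=\,\tfrac{j}{n+1}\cdot\tfrac{j-1}{n}\,+\,\tfrac{n+1-j}{n+1}\cdot\tfrac{j}{n}.
\]
Concavity of $f$ therefore yields $(n+1)\,f\!\big(\tfrac{j}{n+1}\big)\,\geq\,j\,f\!\big(\tfrac{j-1}{n}\big)+(n+1-j)\,f\!\big(\tfrac{j}{n}\big)$ for each such $j$. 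Summing over $1\leq j\leq n$ and reindexing the first summand, the interior values $f(k/n)$ for $1\leq k\leq n-1$ pick up a combined coefficient of $(k+1)+(n+1-k)=n+2$; after reincorporating the endpoint terms $j=0$ and $j=n+1$ (which contribute $f(0)$ and $f(1)$ to $A$), the weighted inequality collapses to
\[
(n+1)A\,\geq\,(n+2)B.
\]

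To finish, I multiply through by $n/(n+1)$ to get $nA\,\geq\,\tfrac{n(n+2)}{n+1}B$, so that
\[
nA-(n+1)B+f(c)\,\geq\,\tfrac{n(n+2)-(n+1)^2}{n+1}\,B+f(c)\,=\,f(c)-\tfrac{B}{n+1}.
\]
Since $\tfrac{B}{n+1}$ is simply the arithmetic mean of $n+1$ values of $f$ on $[0,1]$, it is dominated by the maximum $f(c)$, so the right-hand side is nonnegative and part (1) follows. The main obstacle I anticipate is finding the correct convex-combination representation of $\tfrac{j}{n+1}$ in terms of the coarser grid and keeping the bookkeeping clean enough for the sum to collapse to the weighted inequality $(n+1)A\geq(n+2)B$; the specific shape of the correction $(f(c)-f(0))/n$ in the statement is then explained \emph{a posteriori} by the fact that the residual $B/(n+1)$ is precisely what the hypothesis $f(c)=\max f$ is there to dominate.
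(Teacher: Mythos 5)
Your argument is correct. Note that the paper itself does not prove this theorem --- it is quoted from Borwein, Borwein \& Sims --- so there is no in-paper proof to compare against; your proposal therefore stands as a self-contained verification. The key steps all check out: the identity $nS_n(f)=\sum_{k=0}^{n}f(k/n)-f(c)$ is immediate, the representation $\tfrac{j}{n+1}=\tfrac{j}{n+1}\cdot\tfrac{j-1}{n}+\tfrac{n+1-j}{n+1}\cdot\tfrac{j}{n}$ is a genuine convex combination for $1\le j\le n$, and summing the resulting concavity inequalities does collapse (after the endpoint terms $f(0)$ and $f(1)$ cancel on both sides) to $(n+1)A\ge (n+2)B$. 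The final bookkeeping $nA-(n+1)B+f(c)\ge f(c)-\tfrac{B}{n+1}\ge 0$ is exactly where the hypothesis that $f(c)$ is the maximum enters, and the reduction of part (2) to part (1) via $f\mapsto -f$ and $S_n(-f)=-S_n(f)$ is clean. This is a nice, elementary two-grid comparison in the spirit of the paper's later majorization proof of \Cref{thm - Kuang1} (which likewise compares the grids $\{k/n\}$ and $\{k/(n+1)\}$ through convexity), but it avoids any appeal to majorization machinery.
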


In the paper of Borwein \emph{et al.}, the authors introduce the concept of symmetrization (with respect to $x=\frac{1}{2}$) of the function $f$, defined by
\begin{equation}
    \mathcal{F}(x) \,:=\, \mathcal{F}_{1/2}(x) \,=\, \frac{f(x)+f(1-x)}{2}.
\end{equation}
The symmetrization of a function, as its name suggests, is symmetrical with respect to $x=\frac{1}{2}$ and satisfies the property $R_n(\mathcal{F})=L_n(\mathcal{F})$. It is easily verified that the symmetrization of a convex (resp. concave) function is once again convex (resp. concave). However, it is also possible that the symmetrization of a function that is neither convex nor concave is itself convex or concave. It is this property in particular that makes symmetrization such an interesting tool.

\smallskip
The natural analog to the left and right Riemann sums of $f$ for the symmetrization $\mathcal{F}$ is the \emph{symmetric Riemann sum}
\begin{equation}\label{eq - somme_sym}
    \lambda_n(f) \,:=\, \frac{1}{n} \sum_{k=0}^n f\big( \tfrac{k}{n}\big) - \frac{f\big(\frac{1}{2}\big)}{n},
\end{equation}
introduced by Borwein \emph{et al.} \cite{Borwein}. For this function, the authors showed the following elegant result.

\begin{thm}\textup{\cite{Borwein}}\label{thm - sym}
    Let $f:[0,1]\to\mathbb{R}$ be a symmetrical function with respect to $x=\frac{1}{2}$.
    \begin{enumerate}
        \item If $f$ is concave, then $\lambda_n(f)$ is increasing;
        \item If $f$ is convex, then $\lambda_n(f)$ is decreasing.
    \end{enumerate}
\end{thm}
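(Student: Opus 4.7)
The plan is to derive this theorem as a direct corollary of \Cref{thm - concave+}, exploiting the fact that a symmetric concave (resp.\ convex) function always attains its maximum (resp.\ minimum) at $x=\tfrac12$.

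First I would establish the structural observation that for any concave $f:[0,1]\to\mathbb{R}$ symmetric about $x=\tfrac12$, the point $\tfrac12$ is a global maximizer. The argument is short: the set of maximizers of a concave function on $[0,1]$ is a nonempty convex subset, hence an interval $[\alpha,\beta]$. The symmetry of $f$ forces this interval to be symmetric about $\tfrac12$, so $\tfrac12\in[\alpha,\beta]$. The convex case is entirely analogous, with $\tfrac12$ being a global minimizer.

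Next I would perform the purely algebraic step of rewriting $\lambda_n(f)$ in terms of $R_n(f)$. Using symmetry, $f(0)=f(1)$, so
\[
\lambda_n(f) \,=\, \frac{1}{n}\sum_{k=0}^n f\big(\tfrac{k}{n}\big) - \frac{f(\tfrac12)}{n} \,=\, \frac{f(0)}{n} + R_n(f) - \frac{f(\tfrac12)}{n} \,=\, R_n(f)-\frac{f(\tfrac12)-f(0)}{n}.
\]
This is precisely the quantity whose monotonicity is controlled by \Cref{thm - concave+} once we take $c=\tfrac12$, which is legitimate by the previous paragraph.

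With these two observations in hand, the proof reduces to a single invocation of \Cref{thm - concave+}: if $f$ is concave then $\tfrac12$ is a maximizer and the cited theorem gives that $R_n(f)-\tfrac{f(1/2)-f(0)}{n}=\lambda_n(f)$ is increasing; if $f$ is convex then $\tfrac12$ is a minimizer and the same identity combined with part (2) of \Cref{thm - concave+} yields that $\lambda_n(f)$ is decreasing. There is no real obstacle here beyond recognizing that symmetry forces $c=\tfrac12$ to be extremal and that the correction term $f(1/2)/n$ in the definition of $\lambda_n(f)$ has been tuned exactly so that $\lambda_n(f)$ coincides with the quantity appearing in \Cref{thm - concave+}; indeed this tuning is precisely what makes the symmetric Riemann sum the natural object to consider.
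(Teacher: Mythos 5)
Your derivation is correct. Note first that the paper itself states \Cref{thm - sym} without proof, as a result imported from Borwein \emph{et al.}, so there is no in-paper argument to compare against; what you have done is observe that it is a formal consequence of \Cref{thm - concave+}, which the paper also quotes and which precedes it in the logical ordering, so no circularity is introduced. The two ingredients check out: the identity $\lambda_n(f)=\frac{1}{n}\sum_{k=0}^n f(\tfrac{k}{n})-\frac{f(1/2)}{n}=R_n(f)-\frac{f(1/2)-f(0)}{n}=L_n(f)-\frac{f(1/2)-f(1)}{n}$ is a purely algebraic reindexing valid for any $f$ (symmetry is not even needed here, only for locating the extremum), and once $c=\tfrac12$ is known to be a global maximizer (resp.\ minimizer) the conclusion is a verbatim instance of \Cref{thm - concave+}. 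The one point I would tighten is your justification that $\tfrac12$ is extremal: the assertion that the set of maximizers of a concave function on $[0,1]$ is \emph{nonempty} is false in general (a concave function on a closed interval can fail to attain its supremum, e.g.\ $f(x)=x$ on $[0,1)$ with $f(1)=0$), so the ``interval of maximizers symmetric about $\tfrac12$'' argument quietly assumes what needs checking. The fix is a one-liner that also settles attainment: by concavity and symmetry, $f(\tfrac12)=f\bigl(\tfrac{x+(1-x)}{2}\bigr)\geq\tfrac12\bigl(f(x)+f(1-x)\bigr)=f(x)$ for every $x\in[0,1]$, so $\tfrac12$ is a global maximizer outright (and dually a global minimizer in the convex case). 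With that substitution the proof is complete and, in my view, the most economical route available given the results already on the table.
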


Using \Cref{thm - sym}, the authors were subsequently able to show the following powerful result.

\begin{thm}\textup{\cite{Borwein}}\label{thm - Borwein}
    Let $f:[0,1]\to\mathbb{R}$.
    \begin{enumerate}
    \itemsep0.08em
        \item If $f$ has a concave symmetrization and verifies $f(0)\geq f\big(\tfrac{1}{2}\big)$, then $R_n(f)$ is increasing;
        \item If $f$ has a concave symmetrization and verifies $f(1)\geq f\big(\tfrac{1}{2}\big)$, then $L_n(f)$ is increasing;
        \item If $f$ has a convex symmetrization and verifies $f(0)\leq f\big(\tfrac{1}{2}\big)$, then $R_n(f)$ is decreasing;
        \item If $f$ has a convex symmetrization and verifies $f(1)\leq f\big(\tfrac{1}{2}\big)$, then $L_n(f)$ is decreasing.
    \end{enumerate}
\end{thm}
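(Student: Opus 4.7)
The plan is to reduce each of the four statements to \Cref{thm - sym} applied to the symmetrization $\mathcal{F}$ of $f$, after writing the ordinary Riemann sums $R_n(f)$ and $L_n(f)$ in terms of the symmetric Riemann sum $\lambda_n(f)$ and a single correction term of order $1/n$ whose sign is controlled by the hypothesis.

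The first step is purely algebraic. Using $\mathcal{F}(1/2)=f(1/2)$ and the reindexation $k\mapsto n-k$, one checks the identity
\[
\lambda_n(\mathcal{F}) \,=\, \frac{1}{n}\sum_{k=0}^{n} \mathcal{F}\!\left(\tfrac{k}{n}\right) - \frac{f(1/2)}{n} \,=\, \frac{1}{n}\sum_{k=0}^{n} f\!\left(\tfrac{k}{n}\right) - \frac{f(1/2)}{n} \,=\, \lambda_n(f).
\]
Next, by separating the endpoint terms in $\sum_{k=0}^{n} f(k/n)$, one obtains
\[
R_n(f) \,=\, \lambda_n(f) + \frac{f(1/2)-f(0)}{n}, \qquad L_n(f) \,=\, \lambda_n(f) + \frac{f(1/2)-f(1)}{n}.
\]

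The second step is to invoke \Cref{thm - sym} applied to the symmetrization $\mathcal{F}$ (which is itself symmetric about $x=1/2$ and, by hypothesis, concave or convex). This gives that $\lambda_n(f)=\lambda_n(\mathcal{F})$ is increasing when $\mathcal{F}$ is concave and decreasing when $\mathcal{F}$ is convex.

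For the third step, it remains to handle the correction term. Observe that for any constant $c$, the sequence $c/n$ is increasing in $n$ when $c\leq 0$ and decreasing when $c\geq 0$. For part (1), the hypothesis $f(0)\geq f(1/2)$ means $c:=f(1/2)-f(0)\leq 0$, so $\tfrac{c}{n}$ is increasing; combined with the increasing $\lambda_n(f)$, this shows $R_n(f)$ is increasing. The other three cases follow identically: in each case one checks that the sign of the correction is such that $\tfrac{c}{n}$ has the same monotonicity as $\lambda_n(f)$ supplied by \Cref{thm - sym}, and the two pieces add up to give the desired monotonicity of $R_n(f)$ or $L_n(f)$.

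There is essentially no obstacle beyond the initial algebraic identity $\lambda_n(\mathcal{F})=\lambda_n(f)$, which is what makes $\lambda_n$ the ``right'' object to bridge the Riemann sums of $f$ and those of its symmetrization. The only care needed is bookkeeping the four sign combinations so that the hypothesis on $f(0)$ or $f(1)$ versus $f(1/2)$ always lines up with the monotonicity direction provided by the concavity or convexity of $\mathcal{F}$.
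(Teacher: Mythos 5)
Your proposal is correct, and it follows exactly the route the paper attributes to Borwein et al.: the identities $\lambda_n(\mathcal{F})=\lambda_n(f)$, $R_n(f)=\lambda_n(f)+\tfrac{f(1/2)-f(0)}{n}$, $L_n(f)=\lambda_n(f)+\tfrac{f(1/2)-f(1)}{n}$, followed by an application of \Cref{thm - sym} to $\mathcal{F}$ and a sign check on the $1/n$ correction. The paper itself cites this result without reproducing the proof, so there is nothing further to compare.
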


\subsection{The function \texorpdfstring{$f_b$}{fb}}\label{sec - f_b1}

In 2012, in an attempt to show the non-triviality of the question of the monotonicity of the left and right Riemann sums of a function, Szilárd \cite{Szilard} published a vulgarization paper in which he looked at the monotonicity of the left and right Riemann sums of the function $f_0(x)=\frac{1}{1+x^2}$. Therein, he shows that $L_n(f)$ is monotonically decreasing and that $R_n(f)$ is monotonically increasing. However, his proof was flawed.

Despite this small blunder, Szilárd's paper caught the attention of David Borwein and his collaborators who, in 2015, provided a rectified proof that $L_n(f)$ is monotonically decreasing \cite{Borwein}. However, they were unable to show that $R_n(f)$ is monotonically increasing. Nonetheless, they also proposed to generalize this problem by studying the monotonicity of the Riemann sums of the function
\vspace{-4pt}
\begin{equation}
    f_b(x) \,:=\, \frac{1}{1-bx+x^2}, \qquad (|b|<2).
    \vspace{-4pt}
\end{equation}

Using the techniques developed in their paper, they showed that $L_n(f_b)$ is monotonically decreasing for $b\in[-2,\alpha^-]$, where $\alpha^- \approx -0. 8794$ is the only negative root of the polynomial $b^3-3b^2+3$, and that $R_n(f_b)$ is monotonically increasing for $b\in\big[\frac{1-\sqrt{13}}{3},\frac{1}{2}\big]$.

In 2022, the present author, along with J.\ Mashreghi and F.\ Morneau-Guérin, tried to further analyze the function $f_b$ \cite{Bouthat}. We first noted that it is possible to consider Borwein's question for any $b<2$ (instead of $|b|<2$) since the question is also well defined if $b\leq-2$. Note, however, that $f_b$ has a singularity in $[0,1]$ if $b> 2$ and the left and right Riemann sums of $f_b$ become chaotic (the case $b=2$ is treated in \Cref{rem - b=2}).

Secondly, we were able to extend the results of Borwein, Borwein \& Sims by considering a well-chosen function $g$ and noting that $R_n(f)=R_n(f-g)+R_n(g)$ (likewise for the left Riemann sum). In particular, we were able to answer Szilárd's initial question by showing that $R_n(f_0)$ is indeed monotonically increasing. We then showed that $L_n(f_b)$ is monotonically decreasing for all $b\in(-\infty,\alpha)$, where $\alpha\approx 0.493862$, and we finally proved that $R_n(f)$ is monotonically increasing for all $b\in(-\infty,1]$.

\smallskip

\noindent However, our paper contains two small errors:
\begin{enumerate}
    \item The proof that $R_n(f)$ is monotonically increasing for all $b\in (-\frac{1}{2},1]$ is incorrect;
    \item We stated that numerically, $L_n(f_b)$ appeared to be no longer monotonic for $b>\frac{1}{2}$ and $R_n(f_b)$ appeared to be no longer monotonic for $b>1$, and thus that our results were optimal, or very close to being optimal. However, a more detailed investigation suggests that $L_n(f_b)$ is monotonically increasing for $b\in[1,2)$ and that $R_n(f_b)$ is monotonically decreasing for $b\in\big[\frac{3}{2},2\big)$.
\end{enumerate}
Our first (and most significant) error stems from the fact that the right Riemann sum of the function $k_{a,c,d}$, defined on page 10 of \cite{Bouthat}, is not decreasing as stated and thus, \cite[Theorem 5]{Bouthat} does not apply to this function.

\subsection{Outline of the paper}

The core of the paper will be presented in two main sections, themselves separated in different subsections. 

The first one will be dedicated to proving general theorems about the monotonicity of left and right Riemann sums. In particular, we first give a new proof of \Cref{thm - Kuang1} using tools from the theory of majorization. Doing so, we shine a new light on the problem, giving insight on \emph{why} this problem is so difficult. Then, we improve on \Cref{thm - Borwein} by \emph{completing} it in a very natural way. This will in turn allow us to improve our knowledge of the Riemann sums of $f_b$. 
Moreover, to allow us to easily find interesting examples that help us understand the problem, we also give a simple characterization of the polynomials of degree at most 3 whose Riemann sums are monotonic. 
We then exploit the properties of Fourier series to obtain some new interesting sufficient conditions. We also gives non-trivial examples of functions for which the new sufficient conditions allows us to deduce the monotonicity of its Riemann sums while the previously known ones did not. Finally, we complete the first section by giving necessary conditions for the monotonicity of the left and right Riemann sums.

The second part of the paper is dedicated to the left and right Riemann sums of $f_b$. In particular, we show that $L_n(f_b)$ is decreasing for all $b\in\big(-\infty, \frac{1}{2}\big]$ and increasing for all $b\in[1,\beta^+]$, while $R_n(f_b)$ is increasing for all $b\in(-\infty, 1]$ and decreasing for all $b\in\big[\frac{3}{2},\beta^+\big]$, where $\beta^+:=\frac{3+\sqrt{13}}{4}$. To do so, we will, among other things, show that the monotonicity of $L_n(f_b)$ and $R_n(f_b)$ when $b\in \big[\frac{1}{2},1.347296\big]$ follows directly from the one of $L_n(f_1)=R_n(f_1)$. Hence, we finally show that the latter is indeed monotonic by using a myriads of inequalities and tools, like the Laplace transform and the Residue theorem for sums.

\section{New general results}

\subsection{A new enlightening refinement of \texorpdfstring{\Cref{thm - Kuang1}}{Theorem 1.1}}\label{sec - maj}

It is easy to argue that \Cref{thm - Kuang1} is in some way the result at the heart of the study of the monotonicity of Riemann sums. Indeed, there have been partial results of this type for a long time and the theorem has in fact been proven independently on at least five occasions (see \cite{Kuang,Szilard,Kyrezi,MR2408409,MR137818,MR41284}). In this section, we propose a new fundamental proof, while not the most elementary, is certainly the most revealing. Before doing so,, we first need to establish a few preliminary results from the theory of majorization.

Let $x,y \in \mathbb{R}^n$. We say that $x$ is \emph{majorized} by $y$, and we write $x\prec y$, if 
\vspace{8pt}
\begin{enumerate}
    \itemsep0.7em
    \item $\sum_{j=1}^k x_j^{\uparrow} \geq \sum_{j=1}^k y_j^{\uparrow}$ for $k=1,\dots,n-1$,
    \item $\sum_{j=1}^k x_j^{\uparrow} = \sum_{j=1}^k y_j^{\uparrow}$ for $k=n$, \label{eq - maj_2}
\end{enumerate}
\vspace{8pt}
where $x^{\uparrow} = (x_1^{\uparrow},x_2^{\uparrow},\dots,x_n^{\uparrow})$ is the vector $x$ arranged in increasing order. If condition~\ref{eq - maj_2} is dropped altogether, we say that $y$ \emph{weakly supermajorize} $x$ and we write $x\prec^w y$. Intuitively, majorization says that $x$ is more \emph{evenly distributed} than $y$. Hence, this concept is naturally used in econometric and physics, among other things. For more details on majorization, see \cite{MarshallOlkinBarry2011}.

Probably the most important result in the theory of majorization is a series of equivalent statements characterizing the majorization between two vectors $x$ and $y$. The complete result can be found in \cite[A.3]{MarshallOlkinBarry2011} but here, let us only mention the equivalence of interest to us. This precise statement is often refered to as the \emph{second theorem of Hardy, Littlewood and Pólya} and states that if $x,y\in\mathbb{R}^n$, then $\sum_{j=1}^n \phi(x_j) \leq \sum_{j=1}^n \phi(y_j)$ for all convex functions $\phi$ if and only if $x\prec y$. 
Moreover, there is also a direct analog to this result in the case of weak supermajorization:

\medskip
{\itshape
\noindent Let $x,y\in\mathbb{R}^n$. Then 
\begin{equation}\label{eq - supermaj}
    \sum_{j=1}^n \phi(x_j) \,\leq\, \sum_{j=1}^n \phi(y_j)
\end{equation}
for all decreasing convex functions $\phi$ if and only if $x\prec^w y$.
}
\medskip

This result will allow us to give an alternate proof of \Cref{thm - Kuang1}. However, let us first show the following lemma.

\begin{lem}\label{lem - maj}
    Consider the following two vectors of length $n(n+1)$ :
    \begin{gather*}
        x \,:=\, \Big(\underbrace{\tfrac{1}{n},\dots,\tfrac{1}{n}}_{n+1~\text{times}},\underbrace{\tfrac{2}{n},\dots,\tfrac{2}{n}}_{n+1~\text{times}},\dots\dots,\!\!\underbrace{1,\dots,1}_{n+1~\text{times}}\!\!\Big),\\[-4pt]
        \text{and}\qquad\qquad\qquad\qquad\qquad\qquad\qquad\qquad\qquad\qquad\qquad\qquad\qquad\qquad\qquad\qquad\qquad\qquad\qquad \\[-3pt]
        y \,:=\, \Big(\underbrace{\tfrac{1}{n+1},\dots,\tfrac{1}{n+1}}_{n~\text{times}},\underbrace{\tfrac{2}{n+1},\dots,\tfrac{2}{n+1}}_{n~\text{times}},\dots\dots,\underbrace{1,\dots,1}_{n~\text{times}}\Big).
        \vspace{-5pt}
    \end{gather*}
    Then $x \prec^w y$.
    \vspace{-5pt}
\end{lem}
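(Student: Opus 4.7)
The plan is to verify the weak supermajorization directly from the definition, namely by showing that $S_x(k) := \sum_{j=1}^k x_j^{\uparrow} \geq \sum_{j=1}^k y_j^{\uparrow} =: S_y(k)$ for every $k \in \{1, \ldots, n(n+1)\}$, exploiting the explicit block structure of the two vectors (which are already listed in increasing order, so $x^{\uparrow} = x$ and $y^{\uparrow} = y$).

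The first step is to encode each position $k$ in the two complementary ways suggested by the structures of $x$ and $y$: namely $k = (i-1)(n+1) + r$ with $i := \lceil k/(n+1)\rceil$ and $r \in \{1, \ldots, n+1\}$, so that $x_k^{\uparrow} = i/n$; and separately $k = (j-1)n + s$ with $j := \lceil k/n\rceil$ and $s \in \{1, \ldots, n\}$, so that $y_k^{\uparrow} = j/(n+1)$. Summing two arithmetic progressions then yields the closed forms
\[
    S_x(k) \,=\, \frac{i}{n}\!\left[\frac{(n+1)(i-1)}{2} + r\right], \qquad S_y(k) \,=\, \frac{j}{n+1}\!\left[\frac{n(j-1)}{2} + s\right].
\]
A direct comparison shows that the constraint $k \leq n(n+1)$ forces $j - 1 \leq i \leq j$, so only two regimes need to be analyzed.

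In the first regime $i = j$, equating the two representations of $k$ forces $s = r + i - 1$, and a routine simplification gives
\[
    S_x(k) - S_y(k) \,=\, \frac{i\bigl[(i-1) + 2r\bigr]}{2n(n+1)} \,>\, 0.
\]
In the second regime $i = j - 1$, one instead obtains $s = r + i - n - 1$, and the analogous simplification reduces the required inequality to
\[
    2n(n+1-i) \,+\, i(i-1) \,-\, 2r(n - i) \,\geq\, 0.
\]
This is not obvious on inspection; however, using the sharp bound $r \leq n + 1$ (which encodes the constraint $k \leq n(n+1)$) together with the identity $2n(n+1-i) - 2(n+1)(n-i) = 2i$, the left-hand side is at least $i(i+1) > 0$.

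The main obstacle is precisely this second regime, where the positivity of the partial-sum difference is not apparent at a glance and one must invoke the tight upper bound on $r$ to close the estimate; beyond that, everything reduces to careful but routine manipulation of arithmetic progressions.
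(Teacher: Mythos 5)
Your proposal is correct and follows essentially the same route as the paper: both arguments compute the partial sums in closed form from the block structure and split into the two cases where the current block index of $x$ equals, or is one less than, that of $y$ (your regimes $i=j$ and $i=j-1$ correspond exactly to the paper's Cases 1 and 2). Your parametrization by the within-block offsets $r,s$ and the use of the bound $r\leq n+1$ in the second regime is a slightly cleaner way to close the final inequality, but the substance is identical.
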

\begin{proof}
    It is plain that the desired result is directly obtained if the following holds true :
    \begin{align}\label{eq - maj}
        \sum_{j=1}^k x_j = \sum_{j=1}^k x_j^{\uparrow} \,&\leq\, \sum_{j=1}^k y_j^{\uparrow} = \sum_{j=1}^k y_j, \qquad k=1,\dots,n(n+1).
    \end{align}
    Let us consider two cases.

    \medskip
    \noindent {\bf Case 1:\, $\bm{m(n+1) \!\leq\! k \!\leq\! (m+1)n}$,\, $\bm{0 \!\leq\! m \!\leq\! n}$.}
    \vspace{2pt}

    \noindent In this case, it is a matter of direct computation to verify that on the one hand,
    \begin{align*}
        \sum_{j=1}^{k}x_{j} \,&=\, \sum_{j=1}^{n+1}x_{j}+\sum_{j=n+2}^{2\left(n+1\right)}x_{j}+\cdots+\!\!\sum_{j=m\left(n+1\right)+1}^{k} \!\!x_{j} \\[-1pt]
        &=\, \sum_{j=1}^{n+1}\frac{1}{n}+\sum_{j=n+2}^{2\left(n+1\right)}\frac{2}{n}+\cdots+\!\!\sum_{j=m\left(n+1\right)+1}^{k}\!\frac{m+1}{n} \\[-1pt]
        &=\,\frac{n+1}{n}+2\frac{n+1}{n}+\cdots+m\frac{n+1}{n}+\frac{m+1}{n}\left(k-m\left(n+1\right)\right) \\[-1pt]
        &=\, \frac{m+1}{n}\left(k-\frac{m\left(n+1\right)}{2}\right),
    \end{align*}
    and on the other
    \begin{align*}
        \sum_{j=1}^{k}y_{j} \,&=\, \sum_{j=1}^{n}y_{j}+\sum_{j=n+1}^{2n}y_{j}+\cdots+\sum_{j=mn+1}^{k}y_{j} \\[-1pt]
        &=\, \sum_{j=1}^{n}\frac{1}{n+1}+\sum_{j=n+1}^{2n}\frac{2}{n+1}+\cdots+\sum_{j=mn+1}^{k}\frac{m+1}{n+1} \\[-1pt]
        &=\, \frac{n}{n+1}+\frac{2n}{n+1}+\cdots+\frac{mn}{n+1}+\frac{(m+1)\left(k-mn\right)}{n+1} \\[-1pt]
        &=\, \frac{m+1}{n+1}\left(k-\frac{mn}{2}\right).
    \end{align*}
    Now, 
    \begin{align*}
        \frac{m+1}{n}\left(k-\frac{m\left(n+1\right)}{2}\right) = \sum_{j=1}^{k}x_{j} \,\ge\, \sum_{j=1}^{k}y_{j} = \frac{m+1}{n+1}\left(k-\frac{mn}{2}\right)
    \end{align*}
    is trivially true if $m=0$ and is verified if and only if $k\ge m\left(n+\frac{1}{2}\right)$ when $m\geq 1$. However, by hypothesis $k\geq m (n+1) \geq m\left(n+\frac{1}{2}\right) $ and thus, \eqref{eq - maj} is satisfied in this case.

    \medskip\smallskip
    \noindent {\bf Case 2:\, $\bm{mn \!\leq\! k \!\leq\! m(n+1)}$,\, $\bm{1 \!\leq\! m \!\leq\! n}$.}
    \vspace{2pt}

    \noindent Once again, it is a matter of direct computation to verify that on the one hand,
    \begin{align*}
        \sum_{j=1}^{k}x_{j} \,&=\, \sum_{j=1}^{n+1}x_{j}+\sum_{j=n+2}^{2\left(n+1\right)}x_{j}+\cdots+\sum_{j=\left(m-1\right)\left(n+1\right)+1}^{k}x_{j} \\[-1pt]
        &=\, \sum_{j=1}^{n+1}\frac{1}{n}+\sum_{j=n+2}^{2\left(n+1\right)}\frac{2}{n}+\cdots+\sum_{j=\left(m-1\right)\left(n+1\right)+1}^{k}\frac{m}{n} \\[-1pt]
        &=\, \frac{n+1}{n}+2\frac{n+1}{n}+\cdots+\left(m-1\right)\frac{n+1}{n}+\frac{m}{n}\left(k-\left(m-1\right)\left(n+1\right)\right) \\[-1pt]
        &=\, \frac{m}{n}\left(k-\frac{\left(m-1\right)\left(n+1\right)}{2}\right),
    \end{align*}
    and on the other
    \begin{align*}
        \sum_{j=1}^{k}y_{j} \,&=\, \sum_{j=1}^{n}y_{j}+\sum_{j=n+1}^{2n}y_{j}+\cdots+\sum_{j=mn+1}^{k}y_{j} \\[-1pt]
        &=\, \sum_{j=1}^{n}\frac{1}{n+1}+\sum_{j=n+1}^{2n}\frac{2}{n+1}+\cdots+\sum_{j=mn+1}^{k}\frac{m+1}{n+1} \\[-1pt]
        &=\, \frac{n}{n+1}+\frac{2n}{n+1}+\cdots+\frac{mn}{n+1}+\frac{\left(m+1\right)\left(k-mn\right)}{n+1} \\[-1pt]
        &= \,\frac{m+1}{n+1}\left(k-\frac{mn}{2}\right).
    \end{align*}
    Now, 
    \begin{align*}
        \frac{m}{n}\left(k-\frac{\left(m-1\right)\left(n+1\right)}{2}\right) = \sum_{j=1}^{k}x_{j} \,\ge\, \sum_{j=1}^{k}y_{j} = \frac{m+1}{n+1}\left(k-\frac{mn}{2}\right)
    \end{align*}
    is clearly true if $m=n$ and is verified if and only if $k\le\frac{m}{n-m}\big(n^{2}-(n+\frac{1}{2})(m-1)\big)$ when $m< n$. However, by hypothesis $k\leq m(n+1) $ and we have $n+1\le\frac{1}{n-m}\big(n^{2}-(n+\frac{1}{2})(m-1)\big)$ if and only if $m\ge-1$, which is clearly true. Hence, we have
    \[
    k \,\leq\, m(n+1) \,\leq\, \frac{m}{n-m}\big(n^{2}-(n+\tfrac{1}{2})(m-1)\big)
    \]
    and thus, \eqref{eq - maj} is satisfied for every $1\leq k \leq n(n+1)$.
\end{proof}

Using \Cref{lem - maj}, we can now give an alternative proof of \Cref{thm - Kuang1}.

\begin{proof}[New proof of \Cref{thm - Kuang1}]
    Fix $n\in\mathbb{N}$ and define the function $F:[0,1]^{n(n+1)}\to \mathbb{R}$ by 
    \[
    F(z) \,:=\, \sum_{k=1}^{n(n+1)} f(z_k).
    \]
    Moreover, define the vectors $x, y \in [0,1]^{n(n+1)}$ as in \Cref{lem - maj}. It is then easily seen that
    \begin{equation*}
        \frac{1}{n} \sum_{k=1}^n f\big(\tfrac{k}{n}\big) \,\leq\, \frac{1}{n+1} \sum_{k=1}^{n+1} f\big(\tfrac{k}{n+1}\big).
    \end{equation*}
    is in fact \emph{equivalent} to having $F(x) \leq F(y)$. Indeed, simply multiply both sides of the inequality by $n(n+1)$. Now, since $x\prec^w y$ by \Cref{lem - maj}, it directly follows from the weak supermajorization analog of the second theorem of Hardy, Littlewood and Pólya \eqref{eq - supermaj} that we have $F(x) \leq F(y)$ whenever $f$ is convex and decreasing on $[0,1]$. In other words, $R_n(f) \leq R_{n+1}(f)$ if $f$ is convex and decreasing.
\end{proof}

\begin{rem}
\vspace{-1pt}
    The above proof works as long as $x \prec^w y$. Hence, by generalizing \Cref{lem - maj}, it could also provide a new proof of \Cref{thm - weird}. In fact, it could also be used to provide an optimal generalization to \Cref{thm - Kuang1}.
\end{rem}

\begin{rem}
\vspace{-2pt}
    \eqref{eq - supermaj} is an equivalence result. Hence, not only does it allows us to give a new proof to \Cref{thm - Kuang1}, it also reveal that this result is \emph{sufficient} to characterize the supermajorization between the vectors $x$ and $y$ in \Cref{lem - maj}. Therefore, this new proof intuitively shows why providing new sufficient conditions for the monotonicity of the Riemann sums is hard, since every one of these possible results will be about a family of functions which is not \emph{natural} to this problem.
\end{rem}

\subsection{A refinement of \texorpdfstring{\Cref{thm - Borwein}}{Theorem 1.12}}

\Cref{thm - Borwein} is very useful. However, it appears somewhat incomplete. Indeed, in all of the previous cases, statements about the monotonicity of $R_n$ (resp. $L_n$) all came with a similar statement on $L_n$ (resp. $R_n$). It turns out that it is possible to complete the following result.
\vspace{-3pt}

\begin{thm}\label{thm - improvement1}
    Let $f:[0,1]\to\mathbb{R}$.
    \begin{enumerate}
    \itemsep0.03em
        \item If $f$ has a concave symmetrization and verifies $f(0)\geq f\big(\tfrac{1}{2}\big)$, then $L_n(f)$ is decreasing and $R_n(f)$ is increasing;
        \item If $f$ has a concave symmetrization and verifies $f(1)\geq f\big(\tfrac{1}{2}\big)$, then $L_n(f)$ is increasing and $R_n(f)$ is decreasing;
        \item If $f$ has a convex symmetrization and verifies $f(1)\leq f\big(\tfrac{1}{2}\big)$, then $L_n(f)$ is decreasing and $R_n(f)$ is increasing;
        \item If $f$ has a convex symmetrization and verifies $f(0)\leq f\big(\tfrac{1}{2}\big)$, then $L_n(f)$ is increasing and $R_n(f)$ is decreasing.
    \end{enumerate}
\end{thm}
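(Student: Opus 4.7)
The plan is to reduce the four statements to a single one via two standard symmetries. The substitution $f\mapsto -f$ flips convex $\leftrightarrow$ concave symmetrizations and reverses every inequality, pairing (1) with (4) and (2) with (3); the substitution $f(x)\mapsto f(1-x)$ preserves the symmetrization, swaps $f(0)$ with $f(1)$, and swaps $L_n$ with $R_n$, pairing (1) with (2) and (3) with (4). Under either substitution the two halves of the conclusion get interchanged, so all four claims are equivalent. Moreover, in Case (1) the ``$R_n(f)$ increasing" half is already Theorem \ref{thm - Borwein}(1); so the whole theorem reduces to showing that, under the hypotheses of (1), $L_n(f)$ is \emph{decreasing}.

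To handle that, I would decompose $f=\mathcal{F}+\mathcal{A}$, where $\mathcal{A}(x):=\tfrac{f(x)-f(1-x)}{2}$ is the antisymmetric part. A direct computation yields $L_n(\mathcal{A})=\tfrac{f(0)-f(1)}{2n}$, hence
\[
L_n(f)\,=\,L_n(\mathcal{F})+\frac{f(0)-f(1)}{2n}.
\]
Under Case (1) the concavity and symmetry of $\mathcal{F}$ give $f(1)\le f(1/2)\le f(0)$, so $f(0)-f(1)\ge 0$ and the maximum of $\mathcal{F}$ is attained at $1/2$.

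The crux is then the following auxiliary lemma: \emph{for any concave $\mathcal{F}:[0,1]\to\mathbb{R}$ whose maximum is attained at $1/2$, the sequence}
\[
S_n \,:=\, L_n(\mathcal{F})+\frac{\mathcal{F}(1/2)-\mathcal{F}(0)}{n}
\]
\emph{is decreasing in }$n$. I would prove it by rewriting
\[
S_n\,=\,\frac{(n-1)\,C_n(\mathcal{F})+\mathcal{F}(1/2)}{n},
\]
so that $S_n\ge S_{n+1}$ becomes, after clearing denominators, the inequality $n^2 C_{n+1}(\mathcal{F})-(n^2-1)C_n(\mathcal{F})\le\mathcal{F}(1/2)$. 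Write the left-hand side as $C_n(\mathcal{F})+n^2[C_{n+1}(\mathcal{F})-C_n(\mathcal{F})]$: the second summand is non-positive because $C_n(\mathcal{F})$ is decreasing by Theorem \ref{thm - bilateral}(2), and the first is bounded by $\mathcal{F}(1/2)$ because $\mathcal{F}(1/2)$ is the maximum of $\mathcal{F}$. The lemma follows.

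To conclude, combine the lemma with the decomposition. Since $S_n$ is decreasing, $L_{n+1}(\mathcal{F})-L_n(\mathcal{F})\le\frac{\mathcal{F}(1/2)-\mathcal{F}(0)}{n(n+1)}$, and hence
\[
L_{n+1}(f)-L_n(f)\,\le\,\frac{\mathcal{F}(1/2)-\mathcal{F}(0)}{n(n+1)}-\frac{f(0)-f(1)}{2n(n+1)}\,=\,\frac{f(1/2)-f(0)}{n(n+1)}\,\le\, 0,
\]
where the middle equality uses $\mathcal{F}(0)=\tfrac{f(0)+f(1)}{2}$ and $\mathcal{F}(1/2)=f(1/2)$, and the last inequality is exactly the Case (1) hypothesis $f(0)\ge f(1/2)$. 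The main obstacle I anticipate is guessing the right corrective term $\tfrac{\mathcal{F}(1/2)-\mathcal{F}(0)}{n}$ in the definition of $S_n$; once that auxiliary sequence is in hand, its monotonicity reduces to the monotonicity of $C_n$ (already known from Theorem \ref{thm - bilateral}) plus the trivial bound $C_n(\mathcal{F})\le \mathcal{F}(1/2)$, and the passage from the lemma to Case (1) is a single algebraic manipulation.
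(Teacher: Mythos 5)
Your proof is correct, and it takes a genuinely different route from the paper's. Both arguments reduce the four cases to the single claim that $L_n(f)$ decreases under the hypotheses of (1), and both quote \Cref{thm - Borwein} for the ``$R_n(f)$ increasing'' half. For the remaining half the paper splits $f=g+h$, where $g$ vanishes on $\big[0,\tfrac12\big]$ and equals $2\mathcal{F}-2f\big(\tfrac12\big)$ on $\big[\tfrac12,1\big]$: a dedicated lemma (\Cref{lem - concave}) shows $g$ is concave and decreasing so that \Cref{thm - Kuang1} applies, while $h=f-g$ has constant symmetrization and is handled by \Cref{thm - Borwein} again. You instead split $f$ into its symmetric and antisymmetric parts, compute $L_n(\mathcal{A})=\tfrac{f(0)-f(1)}{2n}$ exactly, and prove the auxiliary fact that $S_n=L_n(\mathcal{F})+\tfrac{\mathcal{F}(1/2)-\mathcal{F}(0)}{n}$ is decreasing for a concave $\mathcal{F}$ with maximum at $\tfrac12$, using only the monotonicity of the central sum $C_n$ from \Cref{thm - bilateral} and the trivial bound $C_n(\mathcal{F})\le\mathcal{F}\big(\tfrac12\big)$; I checked the algebra in the equivalence $S_n\ge S_{n+1}\iff n^2C_{n+1}(\mathcal{F})-(n^2-1)C_n(\mathcal{F})\le\mathcal{F}\big(\tfrac12\big)$ and in the final telescoping to $L_{n+1}(f)-L_n(f)\le\tfrac{f(1/2)-f(0)}{n(n+1)}\le 0$, and both are right. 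Your lemma is an attractive counterpart to \Cref{thm - concave+} and \Cref{thm - sym}, which give the opposite-signed correction $L_n(\mathcal{F})-\tfrac{\mathcal{F}(1/2)-\mathcal{F}(0)}{n}$ \emph{increasing}; the two statements together control $L_n(\mathcal{F})$ from both sides. Two cosmetic points only: the case $n=1$ of your lemma needs a separate word since $C_1$ is undefined (there $S_1=S_2=\mathcal{F}\big(\tfrac12\big)$ directly, and the coefficient $n^2-1$ of $C_n$ vanishes anyway), and the chain $f(1)\le f\big(\tfrac12\big)\le f(0)$ uses the hypothesis $f(0)\ge f\big(\tfrac12\big)$ in addition to concavity and symmetry (though you never actually need $f(0)\ge f(1)$). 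What your route buys is independence from \Cref{lem - concave} and \Cref{thm - Kuang1}; what the paper's buys is that it stays entirely within the $L_n,R_n$ results already established rather than importing the $C_n$ machinery of Bennett--Jameson.
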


\vspace{-3pt}
Some of these results were already proven by Borwein \emph{et al.} in \Cref{thm - Borwein}. However, some of them are new and need to be proved. To do so, we need to establish the following lemma.

\begin{lem}\label{lem - concave}
    Let $f:[0,1]\to\mathbb{R}$ and
    \begin{equation*}
        g(x) \,:=\, \begin{cases}
            0 \quad &\mathrm{if}~~ 0\leq x \leq \tfrac{1}{2}; \\
            2\mathcal{F}(x)-2f\big(\tfrac{1}{2}\big) \quad &\mathrm{if}~~ \tfrac{1}{2}\leq x \leq 1.
        \end{cases}
    \end{equation*}
    Then $g$ is concave and decreasing on $[0,1]$ if and only if the symmetrization of $f$ is concave.
\end{lem}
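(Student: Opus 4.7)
Plan: I would prove both directions of the biconditional separately, each hinging on the elementary fact that a concave function symmetric about $x = \tfrac{1}{2}$ attains its maximum at $\tfrac{1}{2}$ and is non-increasing on $[\tfrac{1}{2},1]$. First, I would note that $g$ is continuous at $\tfrac{1}{2}$, since $2\mathcal{F}(\tfrac{1}{2}) - 2f(\tfrac{1}{2}) = 0$ makes the two branches agree.

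For $(\Leftarrow)$, assume $\mathcal{F}$ is concave. Applying concavity at $\tfrac{1}{2} = \tfrac{1}{2}x + \tfrac{1}{2}(1-x)$ combined with the symmetry relation $\mathcal{F}(x) = \mathcal{F}(1-x)$ gives $\mathcal{F}(x) \leq \mathcal{F}(\tfrac{1}{2})$ everywhere, so $g \leq 0$ on $[\tfrac{1}{2},1]$. A standard convex-combination argument, writing $x_1 = t \cdot \tfrac{1}{2} + (1-t)x_2$ for $\tfrac{1}{2} \leq x_1 < x_2 \leq 1$, then shows that $\mathcal{F}$, and hence $g$, is non-increasing on $[\tfrac{1}{2},1]$, so $g$ is non-increasing on $[0,1]$. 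For concavity of $g$, it is trivially concave on $[0,\tfrac{1}{2}]$ (vanishing) and on $[\tfrac{1}{2},1]$ (as a positive affine image of $\mathcal{F}$); gluing at $\tfrac{1}{2}$ is immediate because the left derivative of $g$ there equals $0$, while the right derivative is $\leq 0$ since $g \leq g(\tfrac{1}{2}) = 0$ on $[\tfrac{1}{2},1]$.

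For $(\Rightarrow)$, concavity of $g$ on $[\tfrac{1}{2},1]$ transfers instantly to concavity of $\mathcal{F}$ there through the relation $\mathcal{F} = \tfrac{1}{2} g + f(\tfrac{1}{2})$, and the symmetry $\mathcal{F}(x) = \mathcal{F}(1-x)$ transports this concavity to $[0,\tfrac{1}{2}]$. To glue at $\tfrac{1}{2}$, I would exploit the identity $\mathcal{F}'_-(\tfrac{1}{2}) = -\mathcal{F}'_+(\tfrac{1}{2})$ forced by symmetry: the required inequality $\mathcal{F}'_-(\tfrac{1}{2}) \geq \mathcal{F}'_+(\tfrac{1}{2})$ reduces to $\mathcal{F}'_+(\tfrac{1}{2}) \leq 0$, which follows directly from $g$ being non-increasing via the relation $g'_+(\tfrac{1}{2}) = 2\mathcal{F}'_+(\tfrac{1}{2})$.

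The main delicate point is the gluing step at $x = \tfrac{1}{2}$ in each direction, which must be handled without assuming differentiability of $f$. This is dispatched by invoking the fact that one-sided derivatives of concave functions always exist at interior points, together with the classical criterion that a function concave on $[a,b]$ and on $[b,c]$ is concave on $[a,c]$ precisely when its left derivative at $b$ is at least its right derivative at $b$.
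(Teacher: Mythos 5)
Your proof is correct. The overall skeleton (treat the two halves of $[0,1]$ separately, then glue at $x=\tfrac12$) matches the paper's in the direction ``$\mathcal{F}$ concave $\Rightarrow$ $g$ concave and decreasing'': you both first deduce that $\mathcal{F}$ is non-increasing on $[\tfrac12,1]$ from concavity plus symmetry (you via the maximum-at-the-centre convex combination, the paper via a reflection/contradiction argument — essentially the same computation), but you then glue the two concave pieces with the classical one-sided-derivative criterion $g'_-(\tfrac12)=0\ge g'_+(\tfrac12)$, whereas the paper verifies the defining concavity inequality directly for $a\le\tfrac12\le b$ using the monotonicity of $g$. In the converse direction your route is genuinely different: you restrict, reflect by symmetry, and glue again with one-sided derivatives at $\tfrac12$, while the paper dispatches this direction in one line via the identity $\mathcal{F}(x)=\tfrac{g(x)+g(1-x)}{2}+f\big(\tfrac12\big)$ and the closure of concavity under sums — an argument that, notably, does not use the decreasing hypothesis at all and avoids any discussion of one-sided derivatives. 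Your derivative-based gluing is sound (the relevant one-sided derivatives of a concave function exist in the extended sense, and $g'_+(\tfrac12)\le 0$ is forced by monotonicity, ruling out $+\infty$), but the paper's identity is the cleaner mechanism for that direction; conversely, your max-at-centre argument for monotonicity of $\mathcal{F}$ on $[\tfrac12,1]$ is arguably more transparent than the paper's proof by contradiction.
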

\begin{proof}
    Assume that $g$ is concave and decreasing on $[0,1]$. Since the sum of concave functions is concave and
    \begin{equation*}
        \mathcal{F}(x) \,=\, \frac{g(x)+g(1-x)}{2}+f\big(\tfrac{1}{2}\big), 
    \end{equation*}
    it immediately follows that $\mathcal{F}(x)$ is concave.

    \smallskip

    Conversely, if $\mathcal{F}(x)$ is concave, then its restriction $\mathcal{F}\rvert_{[1/2,1]}$ is also concave. Suppose that its restriction is also not decreasing. Then there exists $\frac{1}{2}\leq a < b \leq 1$ such that $\mathcal{F}(a) < \mathcal{F}(b)$. Since $\mathcal{F}$ is concave, we must have
    \begin{equation*}
        t\mathcal{F}(1-b) + (1-t)\mathcal{F}(b) \,\leq\, \mathcal{F}(t(1-b)+(1-t)b)
    \end{equation*}
    for all $t\in[0,1]$. Choose $t\in(0,1)$ such that $t(1-b)+(1-t)b=a$ (which is possible since $1-b< a < b$); then
    \begin{equation*}
        t\mathcal{F}(1-b) + (1-t)\mathcal{F}(b) \,\leq\, \mathcal{F}(a).
    \end{equation*}
    But since $\mathcal{F}$ is symmetric about $x=1/2$, $\mathcal{F}(1-b)=\mathcal{F}(b)$ and it follows that
    \begin{equation*}
        \mathcal{F}(b) \,=\, t\mathcal{F}(1-b) + (1-t)\mathcal{F}(b) \,\leq\, \mathcal{F}(a),
    \end{equation*}
    a contradiction. Consequently, $\mathcal{F}\rvert_{[1/2,1]}=g\rvert_{[1/2,1]}$ is concave and decreasing.

    Clearly, $g\rvert_{[0,1/2]}$ is also concave and decreasing since it is constant. Hence, all that remains to show is that $g$ is concave on $[0,1]$. To do this, we need to show that for all $a,b\in[0,1]$ (with $a<b$) and all $t\in[0,1]$ we have
    \begin{equation*}
        tg(a) + (1-t)g(b) \,\leq\, g(ta+(1-t)b).
    \end{equation*}
    If $a,b\in \big[0,\frac{1}{2}\big]$ or $a,b\in \big[\frac{1}{2},1\big]$, we have already shown this to be true. Hence, let $a\in\big[0,\frac{1}{2}\big]$ and $b\in\big[\frac{1}{2},1\big]$. Then we have
    \begin{align*}
        tg(a) + (1-t)g(b) = tg\big(\tfrac{1}{2}\big) + (1-t)g(b) \leq g\big(\tfrac{t}{2}+(1-t)b\big) \leq g(ta+(1-t)b),
    \end{align*}
    the first inequality being verified by the concavity of $g$ on $\big[\frac{1}{2},1\big]$, and the second by the decreasing nature of $g$ on $[0,1]$. Hence, $g$ is concave and decreasing on $[0,1]$, which concludes the proof.
\end{proof}

We can now address the proof of \Cref{thm - improvement1}.

\begin{proof}[Proof of \Cref{thm - improvement1}]
    Points (2), (3) and (4) of the Theorem follows directly from (1) by considering the functions $f(1-x)$, $-f(1-x)$ and $-f(x)$, respectively. Hence, let us focus on the case of (1). Moreover, the fact that $R_n(f)$ is increasing in (1) is the content of \Cref{thm - Borwein}. Therefore, let us show that $L_n(f)$ decreases monotonically with respect to $n$. 
    To do this, consider the functions
    \begin{equation*}
        g(x) \,:=\, \begin{cases}
            0 \quad &\mathrm{if}~~ 0\leq x \leq \tfrac{1}{2}; \\
            2\mathcal{F}(x)-2f\big(\tfrac{1}{2}\big) \quad &\mathrm{if}~~ \tfrac{1}{2}\leq x \leq 1
        \end{cases}
    \end{equation*}
    and $h(x):=f(x)-g(x)$. By \Cref{lem - concave}, $g$ is a concave and decreasing function on $[0,1]$. Therefore, \Cref{thm - Kuang1} ensures that $L_n(g)$ is a decreasing function in $n$. Moreover, a direct computation reveals that the symmetrization of $h$ is constant and equal to $f\big(\frac{1}{2}\big)$. In particular, the symmetrization of $h$ is convex and $h\big(\frac{1}{2}\big) \geq h(1)$. Moreover, since $f(0)\geq f\big(\frac{1}{2}\big)$, we have
    \[
    h\big(\tfrac{1}{2}\big) = f\big(\tfrac{1}{2}\big) \,\geq\, f(1) - f(1)-f(0)+2f\big(\tfrac{1}{2}\big) =f(1)-g(1) = h(1).
    \]
    Hence, $\Tilde{h}(x):=-h(1-x)$ is a concave function satisfying $\Tilde{h}(0) \geq \Tilde{h}\big(\frac{1}{2}\big)$ and \Cref{thm - Borwein} ensures us that $R_n(\Tilde{h})$ is increasing and thus that $L_n(h)$ is a decreasing function.
    Therefore, since $L_n(f)=L_n(g+h)=L_n(g)+L_n(h)$, it follows that the left Riemann sum of $f$ is a monotonically decreasing function of $n$. 
\end{proof}

\subsection{A characterization for small-order polynomials}\label{sec - poly}

To better understand the problem and to illustrate its difficulty, it is helpful to have a definite result on at least one simple family of function. Since the Riemann sum of a polynomial is always a polynomial, it is possible to obtain a characterization of the monotonic Riemann sums in this particular case for various degrees of polynomials. In order to have a non-trivial case while still having an elegant result and proof, we treat the case of polynomials of order 3.

\begin{thm}\label{thm - polynome}
    Let $p$ be a polynomial of order 3. Then 
    \begin{enumerate}
        \item $L_n(p)$ is increasing if and only if $p(0) \leq \min\big\{p(1),\ p\big(\frac{1}{2}\big)\big\}$;
        \item $L_n(p)$ is decreasing if and only if $p(0)\geq \max\big\{p(1),\ p\big(\frac{1}{2}\big)\big\}$;
        \item $R_n(p)$ is increasing if and only if $p(1) \leq \min\big\{p(0),\ p\big(\frac{1}{2}\big)\big\}$;
        \item $R_n(p)$ is decreasing if and only if $p(1)\geq \max\big\{p(0),\ p\big(\frac{1}{2}\big)\big\}$.
    \end{enumerate}
\end{thm}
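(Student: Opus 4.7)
The plan is to reduce the four statements to (1) by two simple symmetries, to derive a clean closed-form expression for $L_n(p)$ when $\deg p\le 3$, and then to reduce the monotonicity question to the sign of a single explicit quadratic in $n$. For the reductions, setting $q(x):=p(1-x)$ and reindexing $k\mapsto n-k$ gives $R_n(p)=L_n(q)$, and since $q(0)=p(1)$, $q(1)=p(0)$ and $q(1/2)=p(1/2)$, statement (3) is exactly (1) applied to $q$. Likewise (2) follows from (1) applied to $-p$, and (4) from (3) applied to $-p$. Hence it suffices to prove (1).

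The core computational step is to invoke the linearity of $L_n$ in $p$ together with Faulhaber's formulas for $\sum_{k=0}^{n-1}k^j$, $j=0,1,2,3$, in order to prove that for every polynomial $p$ of degree at most $3$,
\begin{equation*}
L_n(p) \,=\, \int_0^1 p(x)\,dx \,+\, \frac{p(0)-p(1)}{2n} \,+\, \frac{p(0)+p(1)-2p(1/2)}{3n^{2}}.
\end{equation*}
The fact that the two error coefficients admit such clean expressions in terms of $p(0)$, $p(1)$ and $p(1/2)$ is the key observation; since both sides are linear functionals of $p$, the identity only has to be verified on a basis of the $4$-dimensional space of cubics such as $\{1,x,x^2,x^3\}$. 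Subtracting and bringing to a common denominator then produces
\begin{equation*}
L_{n+1}(p) - L_n(p) \,=\, \frac{Q(n)}{6n^{2}(n+1)^{2}}, \qquad Q(n) \,:=\, 3An^{2} + (3A-4B)n - 2B,
\end{equation*}
where $A := p(1)-p(0)$ and $B := p(0)+p(1)-2p(1/2)$.

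It then remains to characterize the pairs $(A,B)$ for which $Q(n)\ge 0$ for every integer $n\ge 1$. For necessity, the asymptotic $Q(n)\sim 3An^2$ as $n\to\infty$ forces $A\ge 0$, that is $p(0)\le p(1)$; and $Q(1)=6(A-B)=12\bigl(p(1/2)-p(0)\bigr)\ge 0$ forces $p(0)\le p(1/2)$. Together these two conditions are exactly $p(0)\le\min\{p(1),p(1/2)\}$. For sufficiency, assume $A\ge 0$ and $A\ge B$. If $A>0$, the upward-opening parabola $Q$ has vertex at $n^{*}=(4B-3A)/(6A)\le A/(6A)=1/6<1$, so $Q$ is increasing on $[1,\infty)$ and hence $Q(n)\ge Q(1)\ge 0$ for every $n\ge 1$. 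If $A=0$, the hypothesis $A\ge B$ reads $B\le 0$ and one simply gets $Q(n)=-2B(2n+1)\ge 0$ for every $n$. This will conclude the proof of (1), and hence of the theorem.

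The main obstacle is not conceptual but notational: the whole argument hinges on the clean identification of the two error coefficients of $L_n(p)$ as $(p(0)-p(1))/2$ and $(p(0)+p(1)-2p(1/2))/3$, which is precisely what makes the two inequalities of the statement emerge transparently from the $n\to\infty$ and the $n=1$ behaviour of $Q$. The auxiliary case analysis based on the vertex of the parabola is routine and is needed only to rule out any intermediate value $n\ge 1$ upsetting the sign.
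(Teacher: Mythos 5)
Your proposal is correct and follows essentially the same route as the paper: reduce everything to statement (1) by symmetry, compute $L_n(p)$ explicitly via Faulhaber's formulas (your closed form with error coefficients $\tfrac{p(0)-p(1)}{2n}$ and $\tfrac{p(0)+p(1)-2p(1/2)}{3n^2}$ checks out on the monomial basis and agrees with the paper's expression), and then analyze the sign of the resulting quadratic $Q(n)$ in $n$. The only cosmetic difference is the final step: the paper shifts $n\mapsto n+1$ and observes that all three coefficients of the shifted quadratic are nonpositive under the two necessary conditions, whereas you locate the vertex of the parabola to the left of $n=1$ and use $Q(1)\ge 0$; both are valid and equally elementary.
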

\begin{proof}
    Clearly, it suffices to show the assertion for case (1). $p$ is then of the form
    \[
    p(x) \,=\, 2\alpha x^{3}+\big(2p_{1}+2p_{0}-4p_{1/2}-3\alpha\big)x^{2}+\big(\alpha+4p_{1/2}-p_{1}-3p_{0}\big)x+p_{0},
    \]
    where $p_0:=p(0)$, $p_{1/2}:=p\big(\frac{1}{2}\big)$, $p_1=p(1)$ and $\alpha$ is a real number. Using Faulhalmer's formula for the sum of the $p^{\text{th}}$ power of the $n$ first integers, we find that 
    \[
    L_n(f) \,=\, \frac{(p_{1}+4p_{1/2}+p_{0})n^2+3(p_{0}-p_{1})n+2p_{1}-4p_{1/2}+2p_{0}}{6n^{2}},
    \]
    and thus
    \[
    L_{n+1}(f)-L_n(f) \,=\, -\frac{3(p_{0}-p_{1})n^{2}+(p_{1}-8p_{1/2}+7p_{0})n+2(p_{1}-2p_{1/2}+p_{0})}{6n^{2}(n+1)^{2}}.
    \]
    Hence, for $L_n(f)$ to be increasing, it is sufficient to show that 
    \[
    3\left(p_{0}-p_{1}\right)n^{2}+\big(p_{1}-8p_{1/2}+7p_{0}\big)n+2\big(p_{1}-2p_{1/2}+p_{0}\big) \,\leq\, 0
    \]
    for any integer $n\geq 1$. Equivalently (replacing $n$ by $n+1$), we want to show that 
    \begin{equation}\label{eq - polynome}
        3\left(p_{0}-p_{1}\right)n^{2}+\big(13p_{0}-5p_{1}-8p_{1/2}\big)n+12\big(p_{0}-p_{1/2}\big) \,\le\, 0
    \end{equation}
    for any $n\geq 0$. Clearly, considering $n\to\infty$ and $n=0$, we find that we must have 
    \begin{equation}\label{eq - cond}
        p_0-p_1 \,\leq\, 0 \qquad \& \qquad p_0-p_{1/2} \,\leq\, 0.
    \end{equation}
    These conditions are therefore necessary. Now, if these conditions are satisfied, we also find 
    \[
    13p_{0}-5p_{1}-8p_{1/2} \,=\, 5(p_0-p_{1})+8(p_0-p_{1/2}) \,\leq\, 0.
    \]
    Thus, all coefficients in the polynomial of \eqref{eq - polynome} are negative and it follows that \eqref{eq - polynome} is satisfied. The conditions \eqref{eq - cond} are therefore also sufficient. The conclusion follows directly by noting that \eqref{eq - cond} is equivalent to having $p(0) \leq \min\big\{p(1),\ p\big(\frac{1}{2}\big)\big\}$.
\end{proof}

\begin{rem}
\vspace{-3pt}
    The conditions in \Cref{thm - polynome} are not sufficient to ensure the monotonicity of either the left or right Riemann sums of polynomials of greater degree. For example, if $p(x) \hspace{-.3pt}=\hspace{-.3pt}x\hspace{-.3pt}-\hspace{-.3pt}7x^{3}\hspace{-.3pt}+\hspace{-.3pt}6x^{4}$, then $p(0) \hspace{-.3pt}\leq\hspace{-.3pt} \min\!\big\{p(1),\, p\big(\frac{1}{2}\big)\hspace{-1pt}\big\}$ but $L_n(p)\hspace{-.3pt}=\hspace{-.3pt}\frac{-n^{4}+5n^{2}-4}{20n^{4}}$ is decreasing in $n$. 
\end{rem}

\vspace{-4pt}
\Cref{thm - polynome} show a difficulty in the problem of the monotonicity of Riemann sums. Indeed, the theorem implies that a small perturbation of a function can destroy the monotonicity of its Riemann sums. Hence, in general, it is not possible to assert anything about the monotonicity of the Riemann sums of a certain function simply with the monotonicity of the Riemann sums of an approximation of the function, however good it may be.

\subsection{A convenient application of Fourier series}

One of main challenge in determining the monotonicity of Riemann sums is the fact that most of the time, there is no simple explicit form for $R_n(f)$ and $L_n(f)$. For instance, in \Cref{sec - poly} we are able to completely characterize the monotonic Riemann sums of polynomials of degree $\leq 3$ since in this case, the Riemann sums are simply polynomials. Hence, one strategy in the general case is to express $f$ as a combination of simpler functions for which we do have these explicit forms.

For example, in \cite{Bouthat}, the authors expresses $\sin^p(\pi x)$ as an infinite sum of functions of the type $\cos^{2m}(\pi x)$, where $m$ is a positive integer. For these functions, there exist a formula for the left and right Riemann sums which allowed the authors to show that $L_n(\sin^p(\pi x))=R_n(\sin^p(\pi x))$ is indeed monotonic.

Naturally, one could consider the Taylor expansion of the function $f$ since we know explicitly the form of the left and right Riemann sums of $x^m$ by Faulhaber's formula. However, in practice, the polynomials obtained by Faulhaber's formula are complicated and are often not convenient for this application. One family of function which is especially convenient for us is the trigonometric functions $\sin(2\pi m x)$ and $\cos(2 \pi mx)$.
\vspace{-1pt}

\begin{lem}\label{lem - sincos}
    Let $m\in \mathbb{N}$, $s_m(x):=\sin(2\pi mx)$ and $c_m(x):=\cos(2\pi mx)$. Then
    \begin{equation*}
        L_n(s_m)=R_n(s_m)=0 \qquad\&\qquad L_n(c_m)=R_n(c_m) = \begin{cases}
            1 \!&\text{if } \tfrac{m}{n} \in \mathbb{N}, \\
            0 \! &\text{if } \tfrac{m}{n} \notin \mathbb{N}.
        \end{cases}
    \end{equation*}
    \vspace{-22pt}
\end{lem}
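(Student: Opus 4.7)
The plan is to reduce both identities simultaneously to a geometric sum of roots of unity. Setting $\zeta := e^{2\pi i m/n}$ and observing that $c_m(k/n) + i s_m(k/n) = \zeta^k$, I would note that computing $L_n(c_m)$ and $L_n(s_m)$ amounts to taking the real and imaginary parts of $\frac{1}{n}\sum_{k=0}^{n-1}\zeta^k$.

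The first step is to split into two cases according to whether $\zeta = 1$. Since $m,n\in\mathbb{N}$, we have $\zeta = 1$ if and only if $m/n \in \mathbb{Z}$, which (given $m,n>0$) is the same as $m/n\in\mathbb{N}$. In that case every term of the sum equals $1$, so $\sum_{k=0}^{n-1}\zeta^k = n$; dividing by $n$ gives $L_n(c_m) = 1$ and $L_n(s_m) = 0$. When $\zeta\neq 1$, the closed form for a geometric series yields
\[
\sum_{k=0}^{n-1}\zeta^k \,=\, \frac{\zeta^n - 1}{\zeta - 1} \,=\, \frac{e^{2\pi i m} - 1}{\zeta - 1} \,=\, 0,
\]
because $e^{2\pi i m} = 1$ for every integer $m$. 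Taking real and imaginary parts again gives $L_n(c_m) = L_n(s_m) = 0$, matching the stated dichotomy.

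For the right Riemann sum, instead of redoing the computation, I would simply use the periodicity $s_m(0) = s_m(1) = 0$ and $c_m(0) = c_m(1) = 1$. This yields $R_n(f) - L_n(f) = \frac{f(1) - f(0)}{n} = 0$ for $f\in\{s_m,c_m\}$, so $R_n(s_m) = L_n(s_m)$ and $R_n(c_m) = L_n(c_m)$ and the two halves of the statement follow from the left-Riemann calculation. No step here is truly difficult; the only bookkeeping worth flagging is the clean identification $\zeta=1 \iff n \mid m \iff m/n\in\mathbb{N}$, on which the case distinction hinges.
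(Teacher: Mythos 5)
Your proof is correct and rests on the same core computation as the paper's: the geometric sum $\sum_{k=0}^{n-1}e^{2\pi i mk/n}$, split according to whether $e^{2\pi i m/n}=1$, i.e.\ $n\mid m$. The only (harmless) difference is that you obtain the sine identity as the imaginary part of that same sum, whereas the paper disposes of $s_m$ separately via the oddness symmetry $s_m\big(\tfrac{n-k}{n}\big)=-s_m\big(\tfrac{k}{n}\big)$; your endpoint argument $R_n(f)-L_n(f)=\tfrac{f(1)-f(0)}{n}=0$ for reducing $R_n$ to $L_n$ likewise matches the paper's use of $s_m(0)=s_m(1)$.
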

\begin{proof}
    Let us begin with the case of $s_n$. Clearly, $L_n(s_m)=R_n(s_m)$ because $s_m(0)=s_m(1)$. Hence, $L_n(s_m)=R_n(s_m)=\frac{1}{n}\sum_{k=0}^n s_m\big(\frac{k}{n}\big)$. Moreover,
    \begin{align*}
        \frac{1}{n}\sum_{k=0}^n s_m\big(\tfrac{k}{n}\big) \,=\, \frac{1}{n}\sum_{k=0}^n s_m\big(\tfrac{n-k}{n}\big) \,=\, \frac{1}{n}\sum_{k=0}^n s_m\big(-\tfrac{k}{n}\big) \,=\, -\frac{1}{n}\sum_{k=0}^n s_m\big(\tfrac{k}{n}\big)
    \end{align*}
    and it follows that $L_n(s_m)=R_n(s_m)=0$. 
    
   As for the case of $c_m$, let us suppose that $\frac{m}{n} \in \mathbb{N}$. Then
    \begin{equation*}
        L_n(c_m) \,=\, \frac{1}{n}\sum_{k=0}^{n-1}\cos\big(\tfrac{2\pi mk}{n}\big) \,=\, \frac{1}{n}\sum_{k=0}^{n-1} 1 \,=\, 1.
    \end{equation*}
    Similarly, we also find that $R_n(c_m)=1$ if $\frac{m}{n} \in \mathbb{N}$. If $\frac{m}{n} \notin \mathbb{N}$, then
    \begin{equation*}
        L_n(c_m) \,=\, \frac{1}{n}\sum_{k=0}^{n-1}\cos\big(\tfrac{2\pi mk}{n}\big) \,=\, \frac{1}{n}\Re\!\left(\sum_{k=0}^{n-1}e^{\frac{2\pi mk}{n}i}\right) \,=\, \frac{1}{n}\Re\!\left(\frac{1-e^{2\pi im}}{1-e^{\frac{2\pi im}{n}}}\right) \,=\, 0,
    \end{equation*}
    since $1-e^{\frac{2\pi im}{n}} \neq 0$. In the same way, we also find that $R_n(c_m)=0$ if $\frac{m}{n} \notin \mathbb{N}$.
\end{proof}

In light of \Cref{lem - sincos}, it is natural to consider functions who can be expressed as a sum of sine and cosine, i.e., functions whose Fourier series on $[0,1]$ converges at each point of $[0,1]\cap \mathbb{Q}$. In this case we have the following representations of the left and right Riemann sums. For the proof of this theorem, we'll need Dirichlet's theorem which states that if a periodic function $f(x)$ is of bounded variation on a period, then its Fourier series converge at each point of the domain to $\frac{f(x^+)+f(x^-)}{2}$.

\begin{thm}\label{thm - Fourier}
    Let $f:[0,1]\to \mathbb{R}$ be a continuous function of bounded variation on $[0,1]$, and let $\Tilde{f}(x):\mathbb{R}\to\mathbb{R}$ be defined by $\Tilde{f}(x):=f(\{x\})$, where $\{x\}:=x-\lfloor x\rfloor$ is the \emph{fractional part} of $x$. Suppose that the Fourier series of $\Tilde{f}$ is
    \begin{equation*}
        \Tilde{f}(x) \,\sim\, a_{0}+\sum_{k=1}^{\infty}\left(a_{k}\cos\left(2\pi kx\right)+b_{k}\sin\left(2\pi kx\right)\right).
        \vspace{-2pt}
    \end{equation*} 
    Then
    \vspace{-2pt}
    \begin{align*}
        L_n(f) - \tfrac{f(0)-f(1)}{2n} \,&=\, R_n(f) + \tfrac{f(0)-f(1)}{2n} \,=\, a_{0}+\sum_{k=1}^{\infty}a_{nk}  \\
            &=\, \!\int_{0}^{1}\!f(u)du +\! \lim_{m\to \infty} \!2\!\int_{0}^{1}\!f(u)\frac{\sin(\pi mnu)\cos(\pi(m+1)nu)}{\sin(\pi nu)}du.
    \end{align*}
    \vspace{-10pt}
\end{thm}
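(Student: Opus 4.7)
The first equality $L_n(f) - \frac{f(0)-f(1)}{2n} = R_n(f) + \frac{f(0)-f(1)}{2n}$ is immediate from the definitions, since $L_n(f) - R_n(f) = \frac{1}{n}\bigl(f(0) - f(1)\bigr)$, so this single identity requires no argument and only needs to be recorded at the start of the proof.

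For the equality with $a_0 + \sum_{k=1}^\infty a_{nk}$, my plan is to evaluate the Fourier series of $\Tilde{f}$ pointwise at $x = k/n$ for $k = 0, 1, \dots, n-1$. Since $f$ is continuous and of bounded variation on $[0,1]$, its periodic extension $\Tilde{f}$ is of bounded variation on any period and Dirichlet's theorem applies: at each $x = k/n$ with $1 \le k \le n-1$ the series converges to $f(k/n)$, while at the integer point $x = 0$ it converges to $\frac{\Tilde{f}(0^+) + \Tilde{f}(0^-)}{2} = \frac{f(0) + f(1)}{2}$. Summing these pointwise values and dividing by $n$ gives $\frac{1}{n}\bigl[\frac{f(0)+f(1)}{2} + \sum_{k=1}^{n-1} f(k/n)\bigr]$, which, once one notes that this equals $L_n(f) - \frac{f(0)-f(1)}{2n}$, is exactly the quantity we want. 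On the other hand, interchanging the \emph{finite} sum over $k$ with the partial Fourier sum $S_M$ (an operation justified by linearity, after which $M \to \infty$) and applying \Cref{lem - sincos} (or, equivalently, the discrete orthogonality relations $\sum_{k=0}^{n-1}\cos(2\pi j k/n)$ equals $n$ when $n\mid j$ and $0$ otherwise, and $\sum_{k=0}^{n-1}\sin(2\pi j k/n) = 0$) collapses everything down to $a_0 + \sum_{k=1}^{\infty} a_{nk}$. The main subtlety here, and the only real obstacle, is properly handling the jump of $\Tilde{f}$ at integer points, since $f(0)$ and $f(1)$ need not agree; this is precisely what forces the corrective term $\frac{f(0)-f(1)}{2n}$ into the statement.

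For the final integral representation, I would start from $a_{nk} = 2\int_0^1 f(u)\cos(2\pi nku)\,du$ (for $k \ge 1$) together with $a_0 = \int_0^1 f(u)\,du$, interchange the (finite) sum with the integral to get
\[
\sum_{k=1}^{m} a_{nk} \,=\, 2\int_0^1 f(u) \sum_{k=1}^{m}\cos(2\pi nku)\,du,
\]
and then invoke the standard closed form $\sum_{k=1}^{m}\cos(k\theta) = \frac{\sin(m\theta/2)\cos((m+1)\theta/2)}{\sin(\theta/2)}$ (derived from the geometric sum $\sum_{k=0}^{m} e^{ik\theta}$) with $\theta = 2\pi n u$ to obtain the kernel $\frac{\sin(\pi mnu)\cos(\pi(m+1)nu)}{\sin(\pi nu)}$. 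Passing to the limit $m \to \infty$ on both sides and adding $a_0 = \int_0^1 f(u)\,du$ yields the desired representation. The limit on the right exists automatically because the left-hand side of the previously established identity converges (the series $\sum a_{nk}$ has a finite sum equal to $L_n(f) - \frac{f(0)-f(1)}{2n} - a_0$), so no extra regularity assumption on $f$ beyond bounded variation is required.
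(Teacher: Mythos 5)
Your proposal is correct and follows essentially the same route as the paper: both arguments rest on Dirichlet's theorem for the pointwise convergence of the Fourier series of $\Tilde{f}$ (with the jump at integer points producing the $\frac{f(0)-f(1)}{2n}$ correction), the discrete orthogonality from \Cref{lem - sincos} to collapse the double sum to $a_0+\sum_k a_{nk}$, and the geometric-sum closed form of $\sum_{k=1}^m\cos(2\pi nku)$ for the integral representation. The only differences are cosmetic (you work with the left endpoints $k/n$, $0\le k\le n-1$, while the paper works with the right endpoints and adds the identity for the series at $x=1$; your explicit passage through partial sums $S_M$ is, if anything, slightly more careful about the interchange of limits).
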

\begin{proof}
    The identity $L_n(f) - \tfrac{f(0)-f(1)}{2n} = R_n(f) + \tfrac{f(0)-f(1)}{2n}$ is a matter of direct verification. Hence, let us first note that we have
    \begin{equation}\label{eq - Fourier}
    \vspace{-1pt}
        R_n(f) \,=\, \frac{1}{n} \sum_{k=1}^n f\big(\tfrac{k}{n} \big) \,=\, \frac{f(1)}{n} + \frac{1}{n} \sum_{k=1}^{n-1} \tilde{f}\big(\tfrac{k}{n} \big).
    \vspace{-.5pt}
    \end{equation}
    By Dirichlet's theorem and the continuity of $f$, we know that the Fourier series of $\Tilde{f}$ at each point $x\in(0,1)$ converges to $\Tilde{f}(x)=f(x)$. However, $\tilde{f}$ is discontinuous at each integer $m$, and in particular at the points $x=0$ and $1$. For these points, the Fourier series of $\tilde{f}$ converges to $\frac{\tilde{f}(m^+)+\tilde{f}(m^-)}{2}=\frac{\tilde{f}(0^+)+\tilde{f}(0^-)}{2}=\frac{f(0)+f(1)}{2}$. That is, we have
    \begin{equation}\label{eq - Fourier2}
        \frac{f(0)+f(1)}{2} \,=\, \sum_{j=0}^{\infty}a_{j} \,=\, a_{0}+\sum_{j=1}^{\infty}\left(a_{j}\cos\left(2\pi j\right)+b_{j}\sin\left(2\pi j\right)\right).
    \end{equation}
    
    Now, since the Fourier series of $\Tilde{f}$ converges to $f(x)$ for each $x\in(0,1)$, we can replace $\Tilde{f}$ in the right-most sum of \eqref{eq - Fourier} by its Fourier series. Doing so, we obtain
    \begin{align*}
        R_n(f) -\frac{f(1)}{n} \,&=\, \frac{1}{n}\sum_{k=1}^{n-1} f\big(\tfrac{k}{n} \big) \,=\, \frac{1}{n}\sum_{k=1}^{n-1} \bigg( a_{0}+\sum_{j=1}^{\infty}\left(a_{j}\cos\!\left(2\pi j\tfrac{k}{n}\right)+b_{j}\sin\!\left(2\pi j\tfrac{k}{n}\right)\right)\!\!\bigg).
    \end{align*}
    Adding a multiple of $\frac{1}{n}$ of \eqref{eq - Fourier2} to both sides of this equation yield
    \begin{align*}
        R_n(f) +\frac{f(0)-f(1)}{2n} \,&=\,  \frac{1}{n}\sum_{k=1}^{n} \bigg( a_{0}+\sum_{j=1}^{\infty}\left(a_{j}\cos\!\left(2\pi j\tfrac{k}{n}\right)+b_{j}\sin\!\left(2\pi j\tfrac{k}{n}\right)\right)\!\!\bigg) \\[-2pt]
        &=\, a_0 + \sum_{j=1}^{\infty} (a_j R_n(c_j)+b_jR_n(s_j)),
    \end{align*}
    where $c_j$ and $s_j$ are defined as in \Cref{lem - sincos}. Hence, an application of this lemma finally yield
    \begin{align*}
        R_n(f) +\frac{f(0)-f(1)}{2n} \,=\, a_0 + \sum_{j=1}^{\infty} a_{nj}.\\[-17pt]
    \end{align*}

    Now, the right-hand side can be reformulated in a more explicit way. Indeed, recall that we have $a_0 = \int_{0}^{1}\!\tilde{f}(x)dx = \int_{0}^{1}\!f(x)dx$ and
    \begin{equation*}
        a_n \,=\, 2\!\int_{0}^{1}\!\tilde{f}(x)\cos(2\pi nx)dx \,=\, 2\!\int_{0}^{1}\!f(x)\cos(2\pi nx)dx, \qquad n\geq 1.
    \end{equation*}
    Hence, it follows that 
    \begin{align*}
        a_0 + \sum_{j=1}^{\infty} a_{nj} \,&=\, \int_{0}^{1}f(x)dx + 2\sum_{j=1}^{\infty} \int_{0}^{1}f(x)\cos(2\pi nx)dx \\[-4pt]
        &=\, \int_{0}^{1}f(x)dx + \lim_{m\to \infty} 2\sum_{j=1}^{m} \int_{0}^{1} f(x)\cos(2\pi nx)dx \\[-2pt]
        &=\, \int_{0}^{1}f(x)dx + \lim_{m\to \infty} 2 \int_{0}^{1} f(x) \bigg(\sum_{j=1}^{m}\cos(2\pi nx) \bigg)dx \\[-2pt]
        &=\, \int_{0}^{1}f(x)dx + \lim_{m\to \infty} 2 \int_{0}^{1} f(x) \frac{\sin(\pi mnx)\cos(\pi(m+1)nx)}{\sin(\pi nx)} dx,
    \end{align*}
    where the last line is obtained by taking the real part of the geometric sum $\sum_{j=1}^{m} e^{2\pi i nx}$.
\end{proof}

In \cite{pannikov1970convergence} and \cite{petrovich1975properties}, the authors show that if a function can be represented by a trigonometric series with coefficients forming a monotonic sequence decreasing to zero, then its left Riemann sum 
converge in measure to half of the first coefficient. Under similar assumption, we show that the convergence is also monotone.
\vspace{-4pt}

\begin{cor}\label{cor - Fourier}
    Let $f:[0,1]\to \mathbb{R}$ be a continuous function of bounded variation on $[0,1]$, and let $\Tilde{f}(x):\mathbb{R}\to\mathbb{R}$ be defined by $\Tilde{f}(x):=f(\{x\})$, where $\{x\}:=x-\lfloor x\rfloor$ is the \emph{fractional part} of $x$. Suppose that the Fourier series of $\Tilde{f}$ is
    \begin{equation*}
        \Tilde{f}(x) \,\sim\, a_{0}+\sum_{k=1}^{\infty}\left(a_{k}\cos\left(2\pi kx\right)+b_{k}\sin\left(2\pi kx\right)\right).
    \end{equation*} 
    \begin{enumerate}
    \itemsep0em
        \item If $f(1)\geq f(0)$ and the coefficients $a_n$ are decreasing for $n\geq 1$, then $R_n(f)$ is monotonically decreasing relative to $n$;
        \item If $f(0)\geq f(1)$ and the coefficients $a_n$ are decreasing for $n\geq 1$, then $L_n(f)$ is monotonically decreasing relative to $n$;
        \item If $f(0)\geq f(1)$ and the coefficients $a_n$ are increasing for $n\geq 1$, then $R_n(f)$ is monotonically increasing relative to $n$;
        \item If $f(1)\geq f(0)$ and the coefficients $a_n$ are increasing for $n\geq 1$, then $L_n(f)$ is monotonically increasing relative to $n$.
    \end{enumerate}
\end{cor}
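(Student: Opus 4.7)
The plan is to realize the corollary as a one-line consequence of the representation supplied by \Cref{thm - Fourier}. Set $S_n := a_0 + \sum_{k=1}^{\infty} a_{nk}$. Then \Cref{thm - Fourier} yields
\[
R_n(f) \,=\, S_n - \frac{f(0)-f(1)}{2n}, \qquad L_n(f) \,=\, S_n + \frac{f(0)-f(1)}{2n},
\]
and in particular guarantees that $S_n$ is a well-defined finite quantity for every $n\ge 1$. All four statements will then follow from controlling the behaviour of $S_{n+1}-S_n$ separately from that of the correction term $\pm\frac{f(0)-f(1)}{2n(n+1)}$.

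First I would transfer the monotonicity hypothesis on $\{a_n\}_{n\ge 1}$ directly to $\{S_n\}_{n\ge 1}$. Since $(n+1)k \ge nk$ for every $k\ge 1$, if $a_n$ is decreasing then $a_{(n+1)k}\le a_{nk}$ for every $k$, and since both series $\sum_{k\ge 1} a_{nk}$ and $\sum_{k\ge 1} a_{(n+1)k}$ converge (by \Cref{thm - Fourier}), passing to the limit in the partial-sum inequality gives $S_{n+1}\le S_n$. Symmetrically, if $a_n$ is increasing then $S_n$ is increasing in $n$.

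A direct telescoping computation then yields
\begin{align*}
R_{n+1}(f) - R_n(f) \,&=\, (S_{n+1}-S_n) + \frac{f(0)-f(1)}{2n(n+1)}, \\
L_{n+1}(f) - L_n(f) \,&=\, (S_{n+1}-S_n) - \frac{f(0)-f(1)}{2n(n+1)}.
\end{align*}
Each of the four items then reduces to observing that both summands on the right-hand side of the relevant line carry the same sign: the sign of $S_{n+1}-S_n$ is pinned down by the monotonicity hypothesis on $a_n$, while the sign of the correction term is pinned down by the inequality between $f(0)$ and $f(1)$. For example, under the hypotheses of (1), $\{a_n\}$ decreasing gives $S_{n+1}-S_n\le 0$ and $f(1)\ge f(0)$ gives $\frac{f(0)-f(1)}{2n(n+1)}\le 0$, so $R_{n+1}(f)\le R_n(f)$; the other three items are strictly analogous.

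The main (and essentially only) obstacle is the bookkeeping: one must carefully match the four combinations of hypotheses to the four asserted conclusions and verify that in each case the two sign contributions reinforce rather than cancel. Since all of the genuine analytic content has already been absorbed into \Cref{thm - Fourier}, I do not anticipate any further analytic subtlety.
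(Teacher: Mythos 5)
Your proposal is correct and follows essentially the same route as the paper: decompose $R_n(f)$ and $L_n(f)$ via \Cref{thm - Fourier} into $S_n=a_0+\sum_{k\ge1}a_{nk}$ plus the boundary term $\pm\frac{f(0)-f(1)}{2n}$, transfer the monotonicity of $a_n$ to $S_n$ termwise, and check that the boundary term's sign reinforces it. The only cosmetic difference is that you verify all four cases by direct sign bookkeeping, whereas the paper proves only case (1) and obtains the others by applying it to $-f(x)$, $f(1-x)$ and $-f(1-x)$.
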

\begin{proof}
    By considering $-f(x)$, $f(1-x)$ and $-f(1-x)$, it is sufficient to only prove (1). By \Cref{thm - Fourier}, we have
    \[
    R_n(f) \,=\, a_{0}+\sum_{k=1}^{\infty}a_{nk} + \tfrac{f(1)-f(0)}{2n}.
    \]
    Since $f(1)\geq f(0)$, $\tfrac{f(1)-f(0)}{2n}$ is a decreasing function. Moreover, since the coefficients $a_n$ are decreasing, we have $\sum_{k=1}^{\infty}a_{(n+1)k} \leq \sum_{k=1}^{\infty}a_{nk}$ and it follows that $R_n(f)$ is decreasing.
\end{proof}

\begin{example}
    Consider the function $f(x):=e^{\cos\left(2\pi x\right)}\cos\left(\sin\left(2\pi x\right)\right)$. The function is symmetric and is neither convex nor concave. It is easy to verify that none of the previously mentioned theorems is able to show that the Riemann sums of $f$ are monotonic. However, $f$ is clearly continuous and of bounded variation on $[0,1]$ and the Fourier series of $\Tilde{f}$ is 
    \[
    \tilde{f}(x) \,\sim\, \sum_{k=0}^{\infty}\frac{\cos\left(2\pi kx\right)}{k!}.
    \]
    Hence, its Fourier coefficients $a_n$ are decreasing. Moreover, $f(0)=f(1)$ and thus, \Cref{cor - Fourier} ensures that $R_n(f)=L_n(f)$ is monotonically decreasing relative to $n$.
\end{example}

\begin{rem}
    Similarly, the Fourier transform and the Laplace transform can be used in a similar manner if $f$ is extended in a \emph{nice} way to be defined on $\mathbb{R}$ instead of $[0,1]$. In \Cref{sec - f1} for instance, the Laplace transform is used to show the monotonicity of the left and right Riemann sums of the function $\frac{1}{1-x+x^2}$.
\end{rem}

\subsection{A necessary condition}

All the previous results provide sufficient conditions for the Riemann sums of $f$ to be monotone. Here, we also provide a necessary condition. To prove this result, we need to establish the following lemma.

\begin{lem}\label{lem - constant}
    If $f:[0,1]\to \mathbb{R}$ is a monotonic function and either $R_n(f)$ or $L_n(f)$ is constant, then $f$ is constant, except possibly in $x=0$ or $x=1$.
\end{lem}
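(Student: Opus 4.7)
The plan is to reduce to one canonical case and then exploit the extremal equality $R_1(f)=f(1)$ together with the pointwise monotonicity of $f$.

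First I would use symmetry reductions to reduce to a single case: replacing $f$ by $-f$ swaps increasing and decreasing, while replacing $f(x)$ by $f(1-x)$ swaps $L_n$ and $R_n$ (up to the monotonicity direction). Thus I may assume without loss of generality that $f$ is \emph{increasing} on $[0,1]$ and that $R_n(f)=c$ is constant in $n$.

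Next, the key observation: taking $n=1$ gives $R_1(f)=f(1)=c$. For arbitrary $n\ge 1$, the identity
\[
\sum_{k=1}^{n} f\bigl(\tfrac{k}{n}\bigr) \,=\, nR_n(f) \,=\, nc \,=\, nf(1)
\]
holds. Since $f$ is increasing, each summand satisfies $f(k/n)\le f(1)$, so the equality above forces $f(k/n)=f(1)=c$ for every $1\le k\le n$. Varying $n$, this shows $f(q)=c$ for every rational $q\in(0,1]$.

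Finally, I would use monotonicity and the density of the rationals to upgrade this to all of $(0,1)$: for any $x\in(0,1)$, pick rationals $q_1,q_2\in(0,1)$ with $q_1<x<q_2$; then $c=f(q_1)\le f(x)\le f(q_2)=c$, so $f(x)=c$. Thus $f$ is constant on $(0,1]$, which possibly leaves only the value $f(0)$ free. The parallel argument with $L_n(f)=c$ starts from $L_1(f)=f(0)=c$ and with the inequality $f(k/n)\ge f(0)$, yielding constancy on $[0,1)$ and possibly leaving $f(1)$ free. In either reduction, constancy fails at most at one of the endpoints, matching the statement of the lemma.

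I do not expect a genuine obstacle here: the argument is essentially a one-line squeezing once one evaluates at $n=1$. The only mild subtlety is bookkeeping — tracking which endpoint is the ``free'' one in each of the four cases (increasing vs.\ decreasing crossed with $L_n$ vs.\ $R_n$) — but this is handled uniformly by the two symmetry substitutions above.
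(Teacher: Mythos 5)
Your proof is correct and follows essentially the same route as the paper: reduce by symmetry to the case of an increasing $f$ with constant $R_n(f)$, use $R_1(f)=f(1)$ and the bound $f(k/n)\le f(1)$ to force $f(q)=f(1)$ at all rationals in $(0,1]$, then conclude by monotonicity and density. No gaps.
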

\begin{proof}
    Without loss of generality, consider only the case where $R_n(f)$ is constant and $f$ is monotonically increasing. In this case, for all $n$ we have
    \begin{align*}
        f(1) \,=\, R_1(f) \,=\, R_n(f) \,=\, \frac{1}{n} \sum_{k=1}^n f\!\left(\tfrac{k}{n}\right) \,\leq\, \frac{1}{n} \sum_{k=1}^n f(1) \,=\, f(1).
    \end{align*}
    Thus, we have equality in each of the above inequalities, i.e. $f\big(\frac{m}{n}\big)$ for any integers $n\geq 1$ and $1\leq m \leq n$. Since $f$ is monotonically increasing and $f$ is constant on $\mathbb{Q}\cap(0,1]$, it follows that $f$ is constant on $(0,1]$.
\end{proof}

\begin{thm}
    Let $f:[0,1]\to\mathbb{R}$ be a non-constant function. 
    \begin{enumerate}
        \item If $f$ is increasing, then $R_n(f)$ is not increasing and $L_n(f)$ is not decreasing;
        \item If $f$ is decreasing, then $R_n(f)$ is not decreasing and $L_n(f)$ is not increasing.
    \end{enumerate}
\end{thm}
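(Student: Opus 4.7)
The four claims reduce to a single representative by the symmetry substitutions $f\mapsto -f$ (which converts increasing to decreasing while preserving which Riemann sum is under discussion) and $f\mapsto f(1-\cdot)$ (which interchanges $L_n$ with $R_n$ and simultaneously interchanges increasing with decreasing). It thus suffices to prove one case, say: for non-constant increasing $f$, the sequence $R_n(f)$ is not increasing in $n$.

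My plan is to argue by contradiction. Since $f$ is monotone, it is Riemann integrable, so $R_n(f)\to\int_0^1 f(x)\,dx$ as $n\to\infty$. Moreover, since $f$ is increasing, on each subinterval $[(k-1)/n,k/n]$ one has $f(x)\le f(k/n)$, which upon summing yields the standard upper-sum bound $R_n(f)\ge\int_0^1 f(x)\,dx$ for every $n$. Now, assuming toward contradiction that $R_n(f)$ is increasing in $n$, the fact that an increasing sequence of reals is bounded above by its limit gives $R_n(f)\le\int_0^1 f(x)\,dx$ for all $n$. Combining the two inequalities forces $R_n(f)=\int_0^1 f(x)\,dx$ for every $n$, so the sequence $R_n(f)$ is constant.

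At that point I would invoke \Cref{lem - constant}, which, together with the monotonicity of $f$, forces $f$ to be constant on $(0,1]$. Combined with the hypothesis that $f$ is non-constant, only the degenerate profile survives in which $f$ is constant on $(0,1]$ with a possible jump at the single point $x=0$ — precisely the endpoint exception allowed by the lemma. The main obstacle, and the reason the lemma is invoked, lies in this borderline case: because $R_n$ never samples $f(0)$, the sequence $R_n(f)$ is literally constant in $n$ there, which under the reading of the statement in which constant sequences are not considered "increasing" still confirms the claim. In either the generic or the degenerate situation the conclusion therefore holds, and the remaining three assertions of the theorem follow at once from the two symmetries noted at the outset.
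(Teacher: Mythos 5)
Your proposal is correct and follows the same overall strategy as the paper: argue by contradiction, squeeze $R_n(f)$ into being a constant sequence, and then invoke \Cref{lem - constant}. The one genuine difference is the squeeze itself. The paper uses the elementary sandwich $f(1) = R_1(f) \leq R_n(f) \leq \frac{1}{n}\sum_{k=1}^n f(1) = f(1)$, which needs nothing beyond the definition of $R_n$; you instead use $\int_0^1 f \leq R_n(f) \leq \lim_m R_m(f) = \int_0^1 f$, which additionally requires knowing that $R_n(f)$ converges to the integral (true, since monotone functions are Riemann integrable, but slightly heavier machinery for the same conclusion). Where your write-up is actually more careful than the paper's is the endgame: \Cref{lem - constant} only forces $f$ to be constant on $(0,1]$, leaving open the degenerate profile with a jump at $x=0$, for which $R_n(f)$ is literally constant; the paper simply declares ``this is a contradiction,'' whereas you correctly observe that one must read ``increasing'' for the sequence strictly (a constant sequence does not count) to dispose of this case. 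That reading is needed for the theorem to be true at all, so flagging it is a genuine improvement rather than a detour.
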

\begin{proof}
    Considering $-f(x)$, $f(1-x)$ and $-f(1-x)$, it suffice to show that if $f$ is increasing, then $R_n(f)$ is not increasing. 
    Hence, let us assume that $R_n(f)$ is increasing. On the one hand, we have
    \begin{equation*}
        R_n(f)-f(1) \,=\, \frac{1}{n} \sum_{k=1}^n f\big(\tfrac{k}{n}\big)-f(1) \,\leq\, \frac{1}{n} \sum_{k=1}^n f(1)-f(1) \,=\, 0,
    \end{equation*}
    and on the other, we have
    \begin{equation*}
        R_n(f)-f(1) \,\geq\, R_1(f)-f(1) \,=\, 0.
    \end{equation*}
    Therefore, $R_n(f)\equiv f(1)$. By \Cref{lem - constant}, this is a contradiction.
\end{proof}

\section{The function \texorpdfstring{$f_b$}{fb}}

\subsection{Preliminary comments and main results}

Let us recall that, combining the results of Szilárd \cite{Szilard}, Borwein \emph{et al.} \cite{Borwein} and ourselves \cite{Bouthat}, it has already been shown that 
\begin{enumerate}
    \item $L_n(f_b)$ is monotonically decreasing for all $b\in(-\infty,0.493862)$;
    \item $R_n(f_b)$ is monotonically increasing for all $b\in\big(-\infty,\tfrac{1}{2}\big)$,
\end{enumerate}
where
\[
f_b(x) \,=\, \frac{1}{1-bx+x^2}.
\]
More precisely, it was shown in \cite{Bouthat} that $R_n(f_b)$ is monotonically increasing for all $b\leq 1$, but the proof contained a mistake. In this section, we give a corrected proof of this assertion, and we further show the following theorem.
\vspace{-2.5pt}

\begin{thm}\label{thm - main_fb}
    Let $f_b(x):=\frac{1}{1-bx+x^2}$ and $\beta^{+}=\frac{3+\sqrt{13}}{4}\approx 1.651388$. Then 
    \begin{enumerate}
        \item $L_n(f_b)$ is monotonically decreasing for all $b\in\big(-\infty, \frac{1}{2}\big]$ and monotonically increasing for all $b\in[1,\beta^+]$; 
        \item $R_n(f_b)$ is monotonically increasing for all $b\in(-\infty, 1]$ and monotonically decreasing for all $b\in\big[\frac{3}{2},\beta^+\big]$.
    \end{enumerate}
    \vspace{-2.5pt}
\end{thm}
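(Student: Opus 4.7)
The plan is to anchor the proof at $b=1$ and propagate outward. The key preliminary observations are that $f_1(x) = \frac{1}{1-x+x^2}$ is symmetric about $x=\tfrac12$ (giving $L_n(f_1) = R_n(f_1)$ for every $n$) and concave on $[0,1]$, since
\[
f_1''(x) \,=\, \frac{6x(x-1)}{(1-x+x^2)^3} \,\leq\, 0.
\]
The identity $R_n(f_b) - L_n(f_b) = \frac{f_b(1) - f_b(0)}{n} = \frac{b-1}{n(2-b)}$ will be used throughout to transfer between left and right sums, so that one half of each clause implies the other.

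The ranges $b \in (-\infty,\tfrac12]$ are largely treated in \cite{Szilard,Borwein,Bouthat}, where $L_n(f_b)$ was shown to be decreasing for $b < \alpha \approx 0.4939$ and $R_n(f_b)$ increasing for $b < \tfrac12$. Extending these to the closed endpoint $b = \tfrac12$ should follow by refining the polynomial-in-$n$ inequalities established in those papers, verifying that the estimates remain strict at the boundary, while simultaneously correcting the flaw in \cite{Bouthat} that invalidated the corresponding claim for $R_n(f_b)$ on $(-\tfrac12, 1]$.

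The heart of the proof is to show that $R_n(f_1) = L_n(f_1)$ is monotonically increasing. Note that \Cref{thm - concave+} applied to the concave function $f_1$ yields only that $R_n(f_1) - \tfrac{1}{3n}$ is increasing, which is insufficient. To obtain the sharper conclusion, I would exploit the partial-fraction decomposition
\[
f_1(x) \,=\, \frac{1}{i\sqrt 3}\!\left(\frac{1}{x-e^{i\pi/3}} - \frac{1}{x-e^{-i\pi/3}}\right)\!,
\]
which rewrites $R_n(f_1)$ in closed form via the digamma function evaluated at the complex arguments $1 - n e^{\pm i\pi/3}$. Equivalently, as the paper's outline suggests, one can invoke the Laplace-transform identity $\mathcal L\!\left[\tfrac{2}{\sqrt 3}e^{t/2}\sin(\tfrac{\sqrt 3}{2}t)\right](s) = \frac{1}{s^2-s+1}$ together with the residue theorem for sums to represent $R_{n+1}(f_1) - R_n(f_1)$ as an explicit trigonometric/hyperbolic quantity. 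I expect the main technical obstacle of the whole theorem to lie here: establishing the nonnegativity of this quantity will require a careful assembly of several sharp auxiliary estimates on exponential sums, and this is what the paper's outline alludes to when mentioning "a myriad of inequalities and tools".

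Once $R_n(f_1)$ is controlled, the reduction range $b \in [\tfrac12, 1.347296\ldots]$ (with upper endpoint the positive root of $b^3-3b^2+3$) is handled by the decomposition
\[
f_b(x) - f_1(x) \,=\, \frac{(b-1)x}{(1-bx+x^2)(1-x+x^2)},
\]
so that $R_n(f_b) = R_n(f_1) + R_n(f_b - f_1)$ and likewise for $L_n$. One then needs to verify that $R_n(f_b-f_1)$ inherits monotonicity in the correct direction, which should be accessible either through the Fourier-series corollary \Cref{cor - Fourier} applied to the periodic extension of $f_b-f_1$, or by directly checking the "concave-then-convex with monotonicity" shape hypotheses from the first theorem of \cite{Borwein}. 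Finally, for $b \in [\tfrac32, \beta^+]$, I expect \Cref{thm - improvement1} to apply directly: the symmetrization $\mathcal F_b(x) = \tfrac12(f_b(x)+f_b(1-x))$ is concave on $[0,1]$ precisely up to the threshold $\beta^+ = \frac{3+\sqrt{13}}{4}$ (a second-derivative computation reduces this to the quadratic $4b^2-6b-1 \leq 0$), and the inequality $f_b(1) \geq f_b(\tfrac12)$ is equivalent to $b \geq \tfrac32$, so clause (2) of \Cref{thm - improvement1} simultaneously delivers $L_n(f_b)$ increasing and $R_n(f_b)$ decreasing. A short bridging argument is still needed to extend the $L_n$ monotonicity over the gap $[1.347, \tfrac32]$; I would handle this either by pushing the reduction argument a bit further using the fact that the remainder $L_n(f_b-f_1)$ is still controllable, or by an independent direct estimate exploiting the explicit sign of $f_b - f_1$ on $[0,1]$.
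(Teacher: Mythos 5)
Your skeleton --- anchor the proof at $b=1$, prove $L_n(f_1)=R_n(f_1)$ increasing via the Laplace transform and the residue theorem for sums, propagate outward by subtracting a multiple of $f_1$, and settle $\big[\tfrac32,\beta^+\big]$ directly from \Cref{thm - improvement1} with the quadratic $4b^2-6b-1\le 0$ --- matches the paper's strategy. But several steps as you state them would fail or are missing.

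First, the claimed transfer ``one half of each clause implies the other'' via $R_n(f_b)-L_n(f_b)=\frac{b-1}{n(2-b)}$ is false: for $b<1$ that correction term is negative and \emph{increasing} in $n$, so from $L_n$ decreasing you learn nothing about the monotonicity of $R_n$ (a decreasing plus an increasing sequence is indeterminate). The paper never uses such a transfer; each monotonicity statement is proved on its own (the pairing of $L_n$ and $R_n$ conclusions comes from \Cref{thm - improvement1}, whose proof substitutes $f(1-x)$, $-f(x)$, etc., not from this identity). Second, for $b\in(-\infty,\tfrac12]$ you describe the task as ``extending to the closed endpoint,'' but the prior literature only reaches $b<\alpha\approx 0.4939$ for $L_n$; the interval $[\alpha,\tfrac12]$ is genuinely new and is covered in the paper by \Cref{thm - improvement1} (concave symmetrization by \Cref{lem - conc} together with $f_b(0)\ge f_b(\tfrac12)\iff b\le\tfrac12$), not by ``refining polynomial-in-$n$ estimates.''

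Third, and most substantively, your reduction on $\big[\tfrac12,\gamma\big]$ via $g=f_b-f_1$ does not satisfy the hypotheses of any of the quoted theorems: \Cref{thm - improvement1} needs the endpoint condition (e.g.\ $g(0)\ge g(\tfrac12)$ together with a concave symmetrization), and the paper arranges $g(0)=g(\tfrac12)$ exactly by taking $g=f_b-t\,f_1$ with $t=\frac{f_b(0)-f_b(1/2)}{f_1(0)-f_1(1/2)}=\frac{3(2b-1)}{5-2b}$, not $t=1$. Moreover, the concavity/convexity of the symmetrization of this remainder (\Cref{lem - sym_g}) is not ``accessible'' by inspection or by \Cref{cor - Fourier}; it is a long explicit computation with a degree-seven polynomial in $X=4(x-\tfrac12)^2$ whose coefficients are degree-six polynomials in $b$, each of which must be signed separately. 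Fourth, your ``short bridging argument'' on $[\gamma,\tfrac32]$ is in fact a separate theorem in the paper (\Cref{thm - estim_gauche_sup}): there one subtracts a positive multiple of the fourth-order Taylor polynomial $h$ of $f_1$ at $x=\tfrac12$ (whose $L_n$ is an explicit increasing rational function of $n$) and proves concavity of the symmetrization of the remainder by another chain of estimates; nothing in your sketch produces this. Finally, the $b=1$ case --- which you correctly identify as the heart --- is left entirely at the level of ``which tools to use''; the paper's argument occupies an entire section, requiring four auxiliary hyperbolic inequalities, separate lower and upper bounds on the derivatives of the two pieces $F_1$ and $F_2$ of $R_x(f_1)$, and a final numerical assembly. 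As it stands the proposal is a correct road map with the hardest legs of the journey untraveled.
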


Since the study of the monotonicity of Riemann sums of $f_b$ only makes sense for $b\leq 2$, the above result is not far from optimal. However, it says nothing about the monotonicity of Riemann sums when $b\in (\beta^+,2)$, nor when $b\in \big(\frac{1}{2},1\big)$ in the case of $L_n(f_b)$ and when $b\in\big(1,\frac{3}{2}\big)$ in the case of $R_n(f_b)$. While we are not able to provide significant results in these cases at the moment, we do have the following conjecture:
\vspace{-1pt}

\begin{conj}\label{conj}
    Let $f_b(x):=\frac{1}{1-bx+x^2}$. Then 
    \begin{enumerate}
        \item $L_n(f_b)$ is neither increasing nor decreasing for all $b\in\big(\frac{1}{2},1\big)$ and is monotonically increasing for all $b\in(\beta^+,2]$; 
        \item $R_n(f_b)$ is neither increasing nor decreasing for all $b\in\big(1,\frac{3}{2}\big)$ and is monotonically decreasing for all $b\in (\beta^+,2)$.
    \end{enumerate}
    \vspace{-4.5pt}
\end{conj}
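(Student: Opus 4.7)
The Conjecture combines two distinct types of claims: non-monotonicity on the intermediate intervals $\bigl(\tfrac12,1\bigr)$ (for $L_n$) and $\bigl(1,\tfrac32\bigr)$ (for $R_n$), and monotonicity on $(\beta^+,2)$ (increasing for $L_n$, decreasing for $R_n$). The plan is to attack these two types separately.

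For the non-monotonicity claims, I would combine one explicit small-$n$ inequality with a uniform Euler--Maclaurin tail estimate. Direct computation yields
\[
L_2(f_b) - L_1(f_b) \,=\, \frac{2b-1}{2(5-2b)}, \qquad R_2(f_b) - R_1(f_b) \,=\, \frac{3-2b}{2(5-2b)(2-b)},
\]
both strictly positive on $b\in\bigl(\tfrac12,1\bigr)$ and $b\in\bigl(1,\tfrac32\bigr)$ respectively. On the other hand, since $f_b$ is smooth on $[0,1]$ uniformly for $b$ in any compact subset of $(-\infty,2)$, the Euler--Maclaurin expansion gives
\[
L_{n+1}(f_b) - L_n(f_b) \,=\, \frac{f_b(1)-f_b(0)}{2n(n+1)} + O(n^{-3}),
\]
with an analogous identity for $R_n$. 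Since $f_b(0) = 1$ and $f_b(1) = 1/(2-b)$, the leading term is strictly negative on $\bigl(\tfrac12,1\bigr)$, forcing $L_n(f_b)$ to be eventually decreasing; together with $L_2 > L_1$ this gives non-monotonicity, and the same reasoning handles $R_n$ on $\bigl(1,\tfrac32\bigr)$. The one subtlety is uniform control of the error as $b\to 1$, where the leading coefficient vanishes and one must carry the expansion one order further, using $f_b'(1) - f_b'(0)$, to recover a definite sign.

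For the monotonicity claims on $(\beta^+,2)$, the situation is much more delicate: as $b\to 2^-$ the function $f_b$ develops a sharp peak near $x = 1$, with $f_b\bigl((n-1)/n\bigr) \approx n^2/\bigl(1+(2-b)n(n-1)\bigr)$ exhibiting a regime change at $n\sim 1/\sqrt{2-b}$. The identity $R_n(f_b) - L_n(f_b) = (b-1)/(n(2-b))$ couples the two monotonicity statements but does not reduce one to the other: for $L_n$ increasing to imply $R_n$ decreasing, one needs the sharper bound $L_{n+1}(f_b) - L_n(f_b) \leq (b-1)/\bigl((2-b)n(n+1)\bigr)$, and this upper bound is of the very same order as the quantity whose sign we want. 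My plan would be to extend the decomposition trick of \cite{Bouthat} by writing $f_b = g_b + h_b$, where $g_b$ is chosen to absorb the near-singular peak (a natural candidate being proportional to $1/\bigl((1-x)^2+(2-b)x\bigr)$, or a rational function with a pole placed to match the singular behavior) and has explicitly monotone Riemann sums, while $h_b$ is regular enough for \Cref{thm - improvement1} or \Cref{cor - Fourier} to apply uniformly in $b$.

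The main obstacle is precisely this decomposition. The threshold $\beta^+ = (3+\sqrt{13})/4$ is the positive root of $4b^2 - 6b - 1 = 0$, which almost certainly marks the point where some explicit inequality used in the proof of \Cref{thm - main_fb} becomes tight; pushing past it requires a genuinely new non-asymptotic estimate that controls both the peak-dominated regime $n\lesssim 1/\sqrt{2-b}$ and the Euler--Maclaurin-dominated regime $n\gtrsim 1/\sqrt{2-b}$ uniformly in $b$. I expect the Laplace-transform and residue-theorem machinery used in the paper's treatment of $f_1$ to be the most promising route, since it furnishes closed-form expressions for $L_n(f_b)$ amenable to a simultaneous sign analysis in $n$ and $b$; without such a uniform closed form, any piecewise asymptotic argument seems destined to lose uniformity as $b\to 2^-$, which is exactly why the statement is left as a conjecture.
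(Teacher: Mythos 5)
The statement you set out to prove is labelled a \emph{conjecture} in the paper: the authors give no proof, only numerical evidence for the non-monotonicity claims and a verification of the single boundary case $b=2$ in \Cref{rem - b=2}. Judged on its own terms, your proposal actually settles the easier half. The computations $L_2(f_b)-L_1(f_b)=\frac{2b-1}{2(5-2b)}$ and $R_2(f_b)-R_1(f_b)=\frac{3-2b}{2(5-2b)(2-b)}$ are correct, and since $f_b$ is $C^\infty$ on $[0,1]$ for every fixed $b<2$, the two-term Euler--Maclaurin expansion does give $L_{n+1}(f_b)-L_n(f_b)=\frac{f_b(1)-f_b(0)}{2n(n+1)}+O(n^{-3})$ with $f_b(1)-f_b(0)=\frac{b-1}{2-b}$ strictly negative on $\big(\frac12,1\big)$ (one genuinely needs the $n^{-2}$ term of the expansion to see that the difference of the remainders is $O(n^{-3})$ and not merely $O(n^{-2})$). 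Your worry about uniformity as $b\to1$ is, however, a red herring: the conjecture is a pointwise statement over open intervals of $b$, so for each fixed $b$ the leading coefficient is a fixed nonzero constant and no further expansion is needed. Written up carefully, this argument would upgrade the ``neither increasing nor decreasing'' assertions from numerical observation to theorem, which is more than the paper does.

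The genuine gap is the monotonicity on $(\beta^+,2)$, and you acknowledge it yourself. Your diagnosis of the obstruction is essentially correct --- by \Cref{lem - conc} the symmetrization $\mathcal{F}_b$ is concave exactly up to $b=\beta^+$ and has an inflection point beyond it, so \Cref{thm - improvement1}, the engine behind every case of \Cref{thm - main_fb} with $b\neq1$, is unavailable there --- but the proposed remedy is not a proof: the decomposition $f_b=g_b+h_b$ is left entirely unspecified, no candidate $g_b$ is shown to have monotone Riemann sums, and no verification is offered that the remainder $h_b$ meets the hypotheses of any of the paper's sufficient conditions, uniformly or otherwise, as $b\to2^-$. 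As it stands the proposal proves roughly half of the conjecture and leaves the other half exactly where the paper leaves it, namely open.
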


\begin{rem}\label{rem - b=2}
    When $b=2$, $L_n(f_b)=n\big(\frac{\pi^{2}}{6}-\psi_1(n+1)\big)$, where $\psi_1(z)$ is the trigamma function, which is a product of increasing function. Hence, \Cref{conj}.1. is verified when $b=2$.
\end{rem}

The proof of the statements in \Cref{thm - main_fb} is naturally divided into two main parts: the cases $b\neq 1$ and $b=1$. Before addressing these, we first need to recall a technical lemma about the convexity of the symmetrization of $f_b$, due to Borwein \emph{et~al.}.

\begin{lem}\textup{\cite{Borwein}}\label{lem - conc}
    Let $f_b(x)=\frac{1}{1-bx+x^2}$ $(b< 2)$, and let $\alpha \approx -0.8794$ be the negative root of $b^3-3b^2+3$, $\gamma:=1+2\sin\left(\frac{\pi}{18}\right) \approx 1. 347296$ and $\beta^{\pm}=\frac{3\pm\sqrt{13}}{4}\approx -0.151388$ and $1.651388$. Then the symmetrization $\mathcal{F}_b(x)$ of $f_b$ is concave for any $b\in (-\infty,\alpha]\cup[\beta^{-},1]\cup[\gamma,\beta^{+}]$ and has a point of inflection for any other $b<2$.
\end{lem}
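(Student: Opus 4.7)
The plan is to reduce the question to the sign analysis of a single quartic polynomial, and then to control its behaviour on a short interval by comparing it to its values at the two natural extremal points $x=0$ and $x=\tfrac{1}{2}$.

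First, I would exploit that $\mathcal{F}_{b}(x)=\mathcal{F}_{b}(1-x)$, so the function depends on $x$ only through $u:=(x-\tfrac{1}{2})^{2}\in[0,\tfrac{1}{4}]$. Completing the square in the denominator of $f_{b}$ and then combining $f_{b}(x)+f_{b}(1-x)$ yields
\[
\mathcal{F}_{b}(x)\,=\,g(u),\qquad g(u)\,:=\,\frac{A+u}{(A+u)^{2}-Bu},
\]
with $A:=\tfrac{5}{4}-\tfrac{b}{2}$ and $B:=(1-b)^{2}$. The chain rule gives $\mathcal{F}_{b}''(x)=4u\,g''(u)+2g'(u)$. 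Since $1-bx+x^{2}>0$ on $[0,1]$ for every $b<2$, the denominator of $g$ is strictly positive, and a routine computation produces
\[
\mathcal{F}_{b}''(x)\,=\,\frac{2\,Q(v)}{\bigl((A+u)^{2}-Bu\bigr)^{3}},\qquad v:=A+u,
\]
where $Q(v):=3v^{4}+(B-4A)v^{3}-12AB\,v^{2}+3AB(B+4A)v-3A^{2}B^{2}$. The sign of $\mathcal{F}_{b}''$ on $[0,1]$ therefore coincides with the sign of $Q(v)$ on the short interval $v\in[A,A+\tfrac{1}{4}]$, and the absence of inflection points of $\mathcal{F}_{b}$ is equivalent to $Q$ keeping a constant sign on that interval.

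Next, I would evaluate $Q$ at the two endpoints $v=A$ (corresponding to $x=\tfrac{1}{2}$) and $v=A+\tfrac{1}{4}$ (corresponding to $x\in\{0,1\}$). A direct substitution gives $Q(A)=A^{3}(B-A)$, whose sign is that of $B-A$, while computing $\mathcal{F}_{b}''(0)$ from the closed form $f_{b}''(x)=2(3x^{2}-3bx+b^{2}-1)/(1-bx+x^{2})^{3}$ produces
\[
\mathcal{F}_{b}''(0)\,=\,\frac{(b-1)(b^{3}-3b^{2}+3)}{(2-b)^{2}}.
\]
The zero loci of these two endpoint expressions are $\{b=\beta^{\pm}\}$ and $\{b=1,\,\alpha,\,\gamma\}$ respectively (the third real root of $b^{3}-3b^{2}+3$ lies above $2$), which are exactly the thresholds appearing in the statement. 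Tracking signs then shows that $Q(A)$ and $Q(A+\tfrac{1}{4})$ share a common sign precisely on $(-\infty,\alpha]\cup[\beta^{-},1]\cup[\gamma,\beta^{+}]$ and have opposite signs on the complementary part of $(-\infty,2)$; in the latter case, the intermediate value theorem immediately produces an inflection point of $\mathcal{F}_{b}$ in $(0,1)$, which yields half of the claim.

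The main obstacle is the converse direction, ruling out intermediate sign changes of $Q$ on $[A,A+\tfrac{1}{4}]$ when the two endpoint signs already agree. My plan is to study the cubic derivative $Q'(v)=12v^{3}+3(B-4A)v^{2}-24AB\,v+3AB(B+4A)$ and to show that on each of the three windows $(-\infty,\alpha]$, $[\beta^{-},1]$ and $[\gamma,\beta^{+}]$ it admits at most one zero in $[A,A+\tfrac{1}{4}]=[A,(3-b)/2]$; the single interior extremum of $Q$, if any, can then be bounded by the already-computed endpoint values, forcing $Q$ to keep a fixed sign throughout. A useful auxiliary identity in this analysis is $Q'(A)=3AB(B-3A)$, which reduces the sign of $Q'$ at the left endpoint to the simpler threshold $B=3A$. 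I expect this cubic analysis to be the most delicate step: the interval of interest itself moves with $b$, the threshold polynomials $b^{3}-3b^{2}+3$ and $4b^{2}-6b-1$ both have irrational (in fact trigonometric) roots, and the argument will have to be carried out window by window.
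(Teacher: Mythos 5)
The paper itself offers no proof of this lemma --- it is imported verbatim from Borwein, Borwein and Sims --- so there is no in-house argument to compare against and your proposal must be judged on its own terms. Your reduction is correct and well chosen: the substitution $u=(x-\tfrac12)^2$, the closed form $\mathcal{F}_b(x)=(A+u)/((A+u)^2-Bu)$, the identity $\mathcal{F}_b''=4u\,g''+2g'$, and the resulting quartic $Q(v)$ with $Q(A)=A^3(B-A)$ and $\operatorname{sign}Q(A+\tfrac14)=\operatorname{sign}\bigl((b-1)(b^3-3b^2+3)\bigr)$ all check out, and they recover exactly the thresholds $\alpha,1,\gamma,\beta^{\pm}$; the inflection-point half of the statement then follows from the intermediate value theorem. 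However, your own endpoint computation exposes a defect in the statement you set out to prove: for $b\le\alpha$ one has $b<\beta^{-}$, hence $B-A=b^2-\tfrac32 b-\tfrac14>0$ and $Q(A)=A^3(B-A)>0$, i.e.\ $\mathcal{F}_b''(\tfrac12)>0$, so $\mathcal{F}_b$ cannot be concave on $(-\infty,\alpha]$ --- it is convex there (for $b=-1$ this is immediate, since $f_{-1}''(x)=6x(x+1)/(1+x+x^2)^3\ge0$ on $[0,1]$). The phrase ``share a common sign'' silently absorbs this: even completed, your argument proves ``convex or concave'' on the three windows, and you should state explicitly that the first window yields convexity while only $[\beta^{-},1]\cup[\gamma,\beta^{+}]$ yield concavity (which is all the rest of the paper ever uses).

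The genuine gap is the step you defer: excluding an interior sign change of $Q$ on $[A,A+\tfrac14]$ when the endpoint signs agree. The mechanism you propose --- show $Q'$ has at most one zero in the window and then ``bound the single interior extremum by the endpoint values'' --- does not suffice as stated. If $Q$ is negative at both endpoints and has exactly one interior critical point which is a local maximum, that maximum can perfectly well be positive; the endpoint values exert no control over it, and Rolle's theorem only forces at least one critical point between two interior zeros, so ``at most one zero of $Q'$'' is fully compatible with $Q$ crossing zero twice. To close the argument you must either show $Q'$ does not vanish on the window at all (so that $Q$ is monotone between equal-sign endpoints), or show that any interior critical point is a minimum when both endpoints are negative, or bound $Q$ (equivalently $\mathcal{F}_b''$) directly on the window by coefficient-by-coefficient estimates of the sort the paper carries out for the analogous polynomials in \Cref{thm - estim_gauche_sup} and \Cref{lem - sym_g}. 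As it stands, the hardest third of the proof is a plan rather than a proof.
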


\subsection{Proof of \texorpdfstring{\Cref{thm - main_fb} when $b\neq 1$}{Theorem 3.1 when b≠1}}

\subsubsection{The case \texorpdfstring{$b\leq \tfrac{1}{2}$}{b≤½}}

It was shown in \cite{Borwein} that $R_n(f_b)$ is increasing for all $b\leq \tfrac{1}{2}$. Moreover, it was also proved in \cite{Bouthat} that $L_n(f_b)$ is decreasing for all $b<\alpha \approx 0.493862$. Using our new result on symmetrization (i.e., \Cref{thm - improvement1}), we can now prove the following theorem, verifying our previously stated conjecture that $L_n(f_b)$ is decreasing for all $b\leq \frac{1}{2}$.

\begin{thm}
    Let $f_b(x)=\frac{1}{1-bx+x^2}$. Then $L_n(f_b)$ is monotonically decreasing for all $b\leq \frac{1}{2}$.
\end{thm}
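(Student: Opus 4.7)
The plan is to use \Cref{thm - improvement1}.(1), the new refinement of Borwein's symmetrization result, to close the gap between the prior result of Bouthat, Mashreghi and Morneau-Guérin (which gives the decreasing property for $b\in(-\infty,0.493862)$) and the target range $b\leq\tfrac{1}{2}$. Since everything strictly below $0.493862$ is already handled in the literature, it suffices to treat the small remaining interval $b\in[0.493862,\tfrac{1}{2}]$, on which I expect \Cref{thm - improvement1}.(1) to apply directly.

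The verification then splits into two short, routine pieces. First I would observe that $0.493862>\beta^{-}\approx -0.151388$, so the interval $[0.493862,\tfrac{1}{2}]$ lies inside $[\beta^{-},1]$; \Cref{lem - conc} then guarantees that the symmetrization $\mathcal{F}_b$ is concave throughout that range, giving the first hypothesis of \Cref{thm - improvement1}.(1). Second, a direct evaluation yields $f_b(0)=1$ and $f_b(\tfrac{1}{2})=\tfrac{4}{5-2b}$, so the inequality $f_b(0)\geq f_b(\tfrac{1}{2})$ reduces (using $5-2b>0$ in this range) to $5-2b\geq 4$, that is, to precisely the standing assumption $b\leq\tfrac{1}{2}$.

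With both hypotheses of \Cref{thm - improvement1}.(1) verified on $[0.493862,\tfrac{1}{2}]$, the sum $L_n(f_b)$ is monotonically decreasing there, and combining with the prior result this covers the full range $b\leq\tfrac{1}{2}$. There is no real obstacle in the argument; the only substantive point is recognising that \Cref{thm - improvement1} is the exact tool needed to push the interval from the prior $0.493862$ up to the natural threshold $\tfrac{1}{2}$, which is moreover the precise value at which the endpoint condition $f_b(0)\geq f_b(\tfrac{1}{2})$ ceases to hold. This also suggests that $b=\tfrac{1}{2}$ is a genuine barrier for this symmetrization-based approach, consistent with \Cref{conj}.(1) predicting non-monotonicity just beyond it.
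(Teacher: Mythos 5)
Your proposal is correct and uses exactly the paper's argument: apply \Cref{thm - improvement1}.(1) with the concavity of $\mathcal{F}_b$ from \Cref{lem - conc} and the endpoint check $f_b(0)=1\geq \frac{4}{5-2b}=f_b(\tfrac12)$ for $b\leq\tfrac12$. The only cosmetic difference is that the paper invokes the new theorem on all of $[0,\tfrac12]$ (deferring only $b\leq 0$ to prior work) rather than just on $[0.493862,\tfrac12]$, which changes nothing of substance.
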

\begin{proof}
    The result is already verified for $b\leq 0$. Hence, without any loss of generality, assume that $b \geq 0$. By \Cref{lem - conc}, the symmetrization of $f_b$ is concave for all $b\in[0,1]$. Moreover, it is easily verified that $f_b(0)=1 \geq \frac{4}{5-2b} = f_b\big(\frac{1}{2}\big)$ if $b\leq \frac{1}{2}$.
    Consequently, \Cref{thm - improvement1} ensures that $L_n(f_b)$ is decreasing for all $b\leq \frac{1}{2}$.
\end{proof}

\begin{rem}
    If $b=0$, the above proof also shows that $R_n(f_0)=R_n\big(\frac{1}{1+x^2}\big)$ is increasing, giving a simple proof to the initial question of Szilárd \cite{Szilard}.
\end{rem}

\subsubsection{The case \texorpdfstring{$\tfrac{3}{2} \leq b\leq \beta^+$}{3/2≤b≤β+}}

In this case, we have the following result.

\begin{thm}
    Let $f_b(x)=\frac{1}{1-bx+x^2}$. Then $L_n(f_b)$ is monotonically increasing and $R_n(f)$ is monotonically decreasing for all $b\in\big[\frac{3}{2},\beta^+\big]$.
\end{thm}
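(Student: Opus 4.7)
The plan is to verify that on the range $b\in\bigl[\tfrac{3}{2},\beta^+\bigr]$ the two hypotheses of part~(2) of \Cref{thm - improvement1} are both satisfied, so that the desired monotonicity follows immediately. Since this interval lies inside the region $[\gamma,\beta^+]$ on which \Cref{lem - conc} asserts concavity of the symmetrization of $f_b$, the only thing that really needs to be checked is the pointwise comparison $f_b(1)\geq f_b\bigl(\tfrac{1}{2}\bigr)$.

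First I would note that $\tfrac{3}{2}>\gamma=1+2\sin(\pi/18)\approx 1.3473$, so $[\tfrac{3}{2},\beta^+]\subset[\gamma,\beta^+]$, and therefore \Cref{lem - conc} yields that the symmetrization $\mathcal{F}_b$ of $f_b$ is concave for every $b$ in the interval under consideration. Then I would compute directly
\[
f_b(1)=\frac{1}{2-b},\qquad f_b\bigl(\tfrac{1}{2}\bigr)=\frac{4}{5-2b},
\]
and observe that for $b<2$ the denominators $2-b$ and $5-2b$ are both positive, so that the inequality $f_b(1)\geq f_b\bigl(\tfrac{1}{2}\bigr)$ is equivalent to $5-2b\geq 4(2-b)$, i.e.\ to $b\geq \tfrac{3}{2}$. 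Hence the hypothesis $f_b(1)\geq f_b\bigl(\tfrac{1}{2}\bigr)$ of \Cref{thm - improvement1}(2) is fulfilled precisely on $\bigl[\tfrac{3}{2},\beta^+\bigr]$.

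Applying part~(2) of \Cref{thm - improvement1} then gives that $L_n(f_b)$ is monotonically increasing and $R_n(f_b)$ is monotonically decreasing for all such $b$, which is exactly the claim. There is essentially no serious obstacle here: the heavy lifting is already packaged into \Cref{lem - conc} (concavity of the symmetrization on the third component of its concavity region) and \Cref{thm - improvement1} (the refinement completing \Cref{thm - Borwein}). The only thing one needs to verify to make the argument sharp is that $\tfrac{3}{2}$ is precisely the threshold at which $f_b(1)$ overtakes $f_b\bigl(\tfrac{1}{2}\bigr)$, which explains why the interval $\bigl[\tfrac{3}{2},\beta^+\bigr]$ appears in the statement rather than $[\gamma,\beta^+]$; the reason the result says nothing for $b\in\bigl[\gamma,\tfrac{3}{2}\bigr)$ is that the symmetrization is concave but neither endpoint inequality $f_b(0)\geq f_b(1/2)$ nor $f_b(1)\geq f_b(1/2)$ holds, so \Cref{thm - improvement1} is silent.
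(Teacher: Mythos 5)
Your proof is correct and follows essentially the same route as the paper: invoke \Cref{lem - conc} for concavity of the symmetrization on $\big[\frac{3}{2},\beta^+\big]\subset[\gamma,\beta^+]$, check $f_b(1)=\frac{1}{2-b}\geq\frac{4}{5-2b}=f_b\big(\tfrac{1}{2}\big)$ exactly when $b\geq\frac{3}{2}$, and apply \Cref{thm - improvement1}(2). Your added observation that $\frac{3}{2}$ is precisely the threshold where $f_b(1)$ overtakes $f_b\big(\tfrac{1}{2}\big)$ is a nice explanation of why the interval starts there.
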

\begin{proof}
    By \Cref{lem - conc}, the symmetrization of $f_b$ is concave for $b\in\big[\frac{3}{2},\beta^+\big]$. Moreover, it is easily shown that $f\big(\frac{1}{2}\big)=\frac{4}{5-2b} \leq \frac{1}{2-b} = f(1)$ if $\frac{3}{2} \leq b < 2$. Therefore, \Cref{thm - improvement1} ensures that $L_n(f_b)$ increases monotonically and $R_n(f_b)$ decreases monotonically for $b\in\big[\frac{3}{2},\beta^+\big]$.
\end{proof}

Numerically, $R_n(f_b)$ appears to exhibit non-monotonic behavior in the interval $b\in\big(1,\frac{3}{2}\big)$. Hence, the above result appears to be optimal \emph{on the left of the interval} for the right Riemann sum.

\smallskip

\subsubsection{The case \texorpdfstring{$\gamma \leq b\leq \tfrac{3}{2}$}{γ≤b≤3/2}}

In this case, $R_n(f_b)$ appears to be non-monotonic. Hence, let us focus on the left Riemann sum of $f_b$.

\begin{thm}\label{thm - estim_gauche_sup}
    \hspace{-1.4pt}Let $f_b(x)\hspace{-1.3pt}=\hspace{-1.3pt}\frac{1}{1-bx+x^2}$\hspace{-.2pt}. \hspace{-1pt}Then $L_n\hspace{-.5pt}(f_b)$\hspace{-1pt} is monotonically increasing for all $b\!\hspace{.5pt}\in\!\hspace{-.5pt}\big[\hspace{-.5pt}\gamma,\frac{3}{2}\big]$\hspace{-1pt}.
\end{thm}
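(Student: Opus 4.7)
The plan is to apply the symmetrization decomposition $f_b = \mathcal{F}_b + A_b$, where $\mathcal{F}_b$ is the symmetrization about $x=\tfrac{1}{2}$, which is guaranteed concave on $[\gamma,\tfrac{3}{2}]$ by \Cref{lem - conc}, and $A_b(x) = \tfrac{1}{2}(f_b(x) - f_b(1-x))$ is antisymmetric about $\tfrac{1}{2}$. A pairing argument on the sum defining $L_n(A_b)$, using $A_b(k/n)+A_b((n-k)/n)=0$ together with $A_b(\tfrac{1}{2})=0$, shows that $L_n(A_b)=A_b(0)/n=(1-b)/(2n(2-b))$. For $b>1$ this is negative and monotonically increasing in $n$, so the problem reduces to showing that $L_n(\mathcal{F}_b)$ is also monotonically increasing.

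Since $\mathcal{F}_b$ is symmetric about $\tfrac{1}{2}$, $L_n(\mathcal{F}_b)=R_n(\mathcal{F}_b)=T_n(\mathcal{F}_b)$, the trapezoidal sum. The Bennett--Jameson result in \Cref{thm - bilateral} would immediately yield the desired monotonicity, but its auxiliary hypothesis that $\mathcal{F}_b'$ be convex or concave fails globally on $[0,1]$: the antisymmetry $\mathcal{F}_b'(1-x)=-\mathcal{F}_b'(x)$ forces $\mathcal{F}_b'''$ to be antisymmetric as well, so $\mathcal{F}_b'''$ cannot keep a constant sign across $[0,1]$. Applying \Cref{thm - concave+} to the concave $\mathcal{F}_b$ instead gives only the weaker statement that $\lambda_n(f_b)$ is increasing, whence $L_n(f_b)=\lambda_n(f_b)+(f_b(\tfrac{1}{2})-f_b(1))/n$ expresses $L_n(f_b)$ as the sum of an increasing function and a positive decreasing one (for $b<\tfrac{3}{2}$), which is a priori indeterminate.

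To close the gap, my proposal is to use the Fourier representation of \Cref{thm - Fourier}:
\[
L_n(f_b) \,=\, a_0+\sum_{k=1}^{\infty}a_{nk}+\frac{f_b(0)-f_b(1)}{2n},
\]
where the boundary term is negative and increases with $n$ (since $f_b(0)<f_b(1)$ for $b>1$). Using the partial-fraction decomposition $f_b(x)=\tfrac{1}{2i\sin\theta}\bigl(\tfrac{1}{x-e^{i\theta}}-\tfrac{1}{x-e^{-i\theta}}\bigr)$ with $\cos\theta=b/2$ and two integrations by parts, one obtains $a_n\sim -(1+b(2-b))/(2\pi^2n^2(2-b))$, which is negative throughout $[\gamma,\tfrac{3}{2}]$. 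If one can verify that $a_n\le 0$ for every $n\ge 1$ and that $(a_n)_{n\ge 1}$ is nondecreasing uniformly across this range of $b$, then $\sum_{k\ge 1}a_{nk}$ is nondecreasing in $n$ and \Cref{cor - Fourier}(4) yields the conclusion.

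The main obstacle is the uniform sign and monotonicity check on the Fourier coefficients, which will require explicit trigonometric manipulation of the coefficients obtained from the partial-fraction form. As a fallback, one may instead try to show that the increment $L_{n+1}(f_b)-L_n(f_b)$ is monotone in $b$ on $[\gamma,\tfrac{3}{2}]$ via analysis of $\partial_b L_n(f_b)=L_n\bigl(x f_b(x)^2\bigr)$, and thereby transfer the positivity from the already established endpoint case $b=\tfrac{3}{2}$ to the whole interval.
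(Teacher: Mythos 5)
Your diagnosis of why the standard tools fail is accurate: for $b\in[\gamma,\tfrac{3}{2})$ one has $f_b(1)=\tfrac{1}{2-b}<\tfrac{4}{5-2b}=f_b(\tfrac12)$, so \Cref{thm - improvement1} does not apply directly, and your symmetric/antisymmetric splitting merely relocates the same obstruction (an increasing $\lambda_n$ fighting a positive decreasing $O(1/n)$ correction). However, the argument you propose to close this gap is not a proof: it is conditional on the claim that the Fourier cosine coefficients $a_n$ of $f_b$ on $[0,1]$ satisfy $a_n\le 0$ and are nondecreasing for \emph{every} $n\ge 1$, uniformly over $b\in[\gamma,\tfrac32]$. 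Your two integrations by parts only give the asymptotic $a_n\sim -\tfrac{1+b(2-b)}{2\pi^2 n^2(2-b)}$, which controls large $n$; the danger in such claims is always at small $n$, and the exact coefficients of $\tfrac{1}{x-e^{\pm i\theta}}$ on $[0,1]$ are not elementary (they involve sine- and cosine-integral functions), so the promised ``explicit trigonometric manipulation'' is not a routine verification. You neither prove the claim nor give evidence that it holds, and the fallback (monotonicity in $b$ of the increment, via $\partial_b L_n(f_b)=L_n(xf_b^2)$) just replaces the problem with another unproven Riemann-sum monotonicity statement for a more complicated function. As written, the core step is missing.

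For comparison, the paper closes the gap differently: it sets $h$ equal to the fourth-order Taylor polynomial of $f_1$ at $x=\tfrac12$ (a quartic whose left Riemann sum is explicitly $\tfrac{4}{27}(\tfrac{41}{5}-\tfrac{2}{3n^2}-\tfrac{8}{15n^4})$, hence increasing) and considers $g=f_b-t\,h$ with $t=\tfrac{f_b(1)-f_b(1/2)}{h(1)-h(1/2)}>0$ chosen precisely so that $g(1)=g(\tfrac12)$. It then proves by a direct (if lengthy) estimate that the symmetrization of $g$ is concave on $[\gamma,\tfrac32]$, so \Cref{thm - improvement1} applies to $g$, and $L_n(f_b)=L_n(g)+t\,L_n(h)$ is a sum of two increasing sequences. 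If you want to salvage your approach, the analogous move would be to subtract an explicit correction whose Riemann sums you control, rather than hoping for global sign and monotonicity of the $a_n$.
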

\vspace{-5pt}
\begin{proof}
    Consider the $4^{\text{th}}$-order Taylor polynomial of $f_1(x)$ at the point $x=\tfrac{1}{2}$, i.e.,
    \[
    h(x) \,:=\, \tfrac{4}{3}\!\left( 1-\left(\tfrac{2x-1}{\sqrt{3}}\right)^{\!2}+\left(\tfrac{2x-1}{\sqrt{3}}\right)^{\!4}\right),
    \]
    and define
    \vspace{-1pt}
    \[
    g(x) \,:=\, f_b(x) - \frac{f_b(1)-f_b\big(\tfrac{1}{2}\big)}{h(1)-h\big(\frac{1}{2}\big)} h(x) .
    \]
    It is straightforward to show that the left Riemann sum of $h$ is monotonically increasing, since $L_n(h)=\frac{4}{27}\left(\frac{41}{5}-\frac{2}{3n^{2}}-\frac{8}{15n^{4}}\right)$. It is also easy to verify that $t: = \frac{f_b(1)-f_b(\frac{1}{2})}{h(1)-h(\frac{1}{2})} = \frac{27}{8}\frac{3-2b}{(5-2b)(2-b)}$ is positive for $b\in \big[\gamma,\frac{3}{2}\big]$. Hence, to prove the desired statement, it suffices to show that $L_n(g)$ is also increasing since $L_n(f_b)= L_n(g)+t L_n(h)$. 

    \smallskip

    To do this, consider the symmetrization of $g$, $\mathcal{G}(x) = \mathcal{F}_{b}(x)-th(x)$, whose second derivative is equal to
    \begin{align*}
        \mathcal{G}''(x) \,=\,  \mathcal{F}''_{b}(x)-th''(x) \,=\, \mathcal{F}''_{b}(x)-\frac{32t\big(8\big(x-\frac{1}{2}\big)^{2}-1\big)}{9}.
    \end{align*}
    We will show that $\mathcal{G}''(x) \leq 0$ and it will thus follow from \Cref{thm - improvement1} that $L_n(g)$ is increasing, since $g(1)=g\big(\frac{1}{2}\big)$ by construction. Hence, let us first observe that if $8\big(x-\frac{1}{2}\big)^2\geq 1$, then by \Cref{lem - conc} we have
    \begin{align*}
        \mathcal{G}''(x) \,=\, \mathcal{F}''_{b}(x)-\frac{32t\big(8\big(x-\frac{1}{2}\big)^{2}-1\big)}{9} \,\leq\, \mathcal{F}''_{b}(x) \,\leq\, 0.
    \end{align*}
    We can therefore suppose without loss of generality that $8\big(x-\frac{1}{2}\big)^2\leq 1$. Furthermore, to simplify the following calculations, define
    \begin{gather*}
        X:= 4\big(x-\tfrac{1}{2}\big)^2, ~~\quad A:=1+6b-4b^{2}-3X, \\
        B:=4X(1-b)^{2} ~~\quad\&~~\quad C:=X+5-2b
    \end{gather*}
    and observe that
    \vspace{-2pt}
    \begin{gather}
        \tfrac{1}{2} \,\leq\, X \,\leq\, 1, \\
        C \,\geq\, X+5-2\big(\tfrac{3}{2}\big) = X+2 \,\geq\, 0. \label{eq - C}
    \end{gather}
    It is not hard to show that we then have 
    \begin{equation*}
        \mathcal{F}''_{b}(x) \,=\, -32\,\frac{3B\big(B+3C^{2}+AC\big)+AC^{3}}{\left(C^{2}-B\right)^{3}}.
    \end{equation*}
    Moreover, by \eqref{eq - C}, we have
    \begin{align}
        C^{2}-B \,&\ge\, (X+2)^{2}-B \,=\, (X+2)^{2}-4X(1-b)^{2} \nonumber\\
        &\ge\, (X+2)^{2}-4X\big(1-\tfrac{3}{2}\big)^{2} \,=\, X^{2}+3X+4 \label{eq - pos}\\
        &\geq\, 0. \nonumber
    \end{align}
    Hence, it follows from \Cref{lem - conc}, since $\mathcal{F}_b''(x)\leq 0$, that both the denominator and the numerator of $\mathcal{F}''_{b}$ are positive. Consequently, since $B\geq0$, we have
    \begin{align*}
        \frac{\mathcal{F}''_{b}(x)}{32} \,&=\, -\frac{3B(B+3C^{2}+AC)+AC^{3}}{(C^{2}-B)^{3}} \leq\, -\frac{3B(B+3C^{2}+AC)+AC^{3}}{(C^{2})^{3}} \\[-1pt]
        &=\, -\frac{3B(B+3C^{2}+AC)+AC^{3}}{C^{6}} \,\leq\, -\frac{3B(3C^{2}+AC)+AC^{3}}{C^{6}} \\[-1pt]
        &=\, -\frac{3B(3C+A)+AC^{2}}{C^{5}} \,=\, -\frac{A+12(4-b^{2})\frac{B}{C^{2}}}{C^{3}}.
    \end{align*}
    Now, since
    $C^{2} = (X+5-2b)^{2} \le (1+5-2b)^{2} = 4(3-b)^{2},$ it follows that
    \begin{align*}
        \frac{\mathcal{F}''_{b}(x)}{32} \,&\leq\, -\frac{A+12(4-b^{2})\frac{B}{C^{2}}}{C^{3}} \,\le\, -\frac{A+\frac{12(4-b^{2})}{4(3-b)^{2}}B}{C^{3}} \\
        &=\, \frac{3\Big(1-4\frac{(4-b^{2})(1-b)^{2}}{(3-b)^{2}}\Big)X+4b^{2}-6b-1}{C^3}.
    \end{align*}
    It is not hard to show that $1\geq 4\frac{(4-b^{2})(1-b)^{2}}{(3-b)^{2}} $ when $b\in \big[\gamma,\frac{3}{2}\big]$. Therefore, since $X\leq 1$,
    \begin{align*}
        3\Big(1-4\tfrac{(4-b^{2})(1-b)^{2}}{(3-b)^{2}}\Big)X+4b^{2}-6b-1 \,&\le\, 3\Big(1-4\tfrac{(4-b^{2})(1-b)^{2}}{(3-b)^{2}}\Big)+4b^{2}-6b-1 \\
        &=\, \frac{2(b-1)(8b^{3}-19b^{2}+15)}{(3-b)^{2}} \,<\, 0.
    \end{align*}
    Thus, since $C=X+5-2b \leq \frac{1}{2}+5-2b = \frac{11}{2}-2b$, it follows that 
    \begin{align*}
        \frac{\mathcal{F}''_{b}(x)}{32} \,&\leq\, \frac{3\Big(1-4\frac{(4-b^{2})(1-b)^{2}}{(3-b)^{2}}\Big)X+4b^{2}-6b-1}{C^3} \\
        &\leq\, \frac{3\Big(1-4\frac{(4-b^{2})(1-b)^{2}}{(3-b)^{2}}\Big)X+4b^{2}-6b-1}{\left(\frac{11}{2}-2b\right)^{\!3}}.
    \end{align*}
    Consequently, $\mathcal{G}''(x) = \mathcal{F}''_{b}(x)-\frac{32t(2X-1)}{9}$ satisfy
    \begin{align*}
        \mathcal{G}''(x) \,&\leq\, 32\,\frac{3\Big(1-4\frac{(4-b^{2})(1-b)^{2}}{(3-b)^{2}}\Big)X+4b^{2}-6b-1}{\left(\frac{11}{2}-2b\right)^{\!3}}-\frac{32t(2X-1)}{9} 
        \,=:\, 32\,T(x).
    \end{align*}
    The latter expression is linear in $X$. Hence, the maximum of $T$ is achieved at one of the extremum of the domain of $X$, i.e. at $X=\frac{1}{2}$ or $X=1$. Therefore, it suffices to verify that $\smash{T\big(\tfrac{1}{2}\big)}$ and $T(1)$ are both positive, that is 
    \begin{gather*}
        T\big(\tfrac{1}{2}\big) =\, 8\,\frac{(3-b)^{2}(4b^{2}-6b+\frac{1}{2})-6(4-b^{2})(1-b)^{2}}{(11-4b)^{3} (3-b)^{2}} \,\leq\, 0, \\[-3pt]
        T(1) \,=\, \frac{8(4b^{2}-6b-1)}{(11-4b)^{3}}+\frac{3(3-2b)}{8(5-2b)(2-b)} \,\leq\, 0, 
    \end{gather*}
    for all $b\in \big[\gamma, \frac{3}{2}\big]$. To simplify the equations, put $w:=2(b-1)$. We then find that above conditions are satisfied if and only if, for all $w\in \big[4\sin\!\big(\frac{\pi}{18}\big),1\big]$, we have
    \begin{gather}
        T\big(\tfrac{1}{2}\big) \,=\, 4\, \frac{5w^{4}-2w^{3}-23w^{2}+56w-48}{\left(4-w\right)^{2}\left(7-2w\right)^{3}} \,\leq\, 0, \label{eq - T1/2}\\[-2pt]
        T(1) \,=\, \frac{56w^{4}-404w^{3}+1070w^{2}-1239w+453}{4(3-w)(2-w)(7-2w)^{3}} \,\leq\, 0. \label{eq - T1}
    \end{gather} 
    Let us first verify that \eqref{eq - T1/2} is satisfied. To do this, note that since $w\leq 1$, we have
    \begin{align*}
        T\big(\tfrac{1}{2}\big) \,&=\, 4\, \frac{5w^{4}-2w^{3}-23w^{2}+56w-48}{\left(4-w\right)^{2}\left(7-2w\right)^{3}} \,\leq\, 4\,\frac{5w^{3}-2w^{3}-23w^{2}+56w-48}{\left(4-w\right)^{2}\left(7-2w\right)^{3}} \\
        &=\, 4\,\frac{3w^{3}-23w^{2}+56w-48}{\left(4-w\right)^{2}\left(7-2w\right)^{3}} \,\leq\, 4\,\frac{3w^{2}-23w^{2}+56w-48}{\left(4-w\right)^{2}\left(7-2w\right)^{3}} \\
        &=\, -16\,\frac{5w^{2}-14w+12}{\left(4-w\right)^{2}\left(7-2w\right)^{3}} \,\leq\, -16\,\frac{5w^{2}-14w+12w}{\left(4-w\right)^{2}\left(7-2w\right)^{3}} \\
        &=\, -16\,\frac{w(5w-2)}{\left(4-w\right)^{2}\left(7-2w\right)^{3}} \,\leq\, -16\,\frac{w(5\cdot 4\sin(\frac{\pi}{18})-2)}{\left(4-w\right)^{2}\left(7-2w\right)^{3}} \,\leq\, 0.
    \end{align*}
    Hence, \eqref{eq - T1/2} is satisfied. Let us now show that \eqref{eq - T1} is also true. Since $w\leq 1$, we have
    \begin{align*}
        T(1) \,&=\, \frac{56w^{4}-404w^{3}+1070w^{2}-1239w+453}{4(3-w)(2-w)(7-2w)^{3}} \\
        &\leq\, \frac{56w^{3}-404w^{3}+1070w^{2}-1239w+453}{4(3-w)(2-w)(7-2w)^{3}} \\
        &=\, \frac{-348w^{3}+1070w^{2}-1239w+453}{4(3-w)(2-w)(7-2w)^{3}} \,=:\, \frac{R(w)}{Q(w)}. 
        %
    \end{align*}
    Clearly, $Q(w)> 0$ and an easy computation yield
    \[
    R'(w) \,=\, -1044\left(w-\tfrac{535}{522}\right)^{2}-\tfrac{37154}{261} \,\leq\, 0.
    \]
    Hence, the numerator $R(w)$ is decreasing and thus satisfies
    \begin{align*}
        -348w^{3}&+1070w^{2}-1239w+453 \\
        &\leq\, -348\left(4\sin\!\left(\tfrac{\pi}{18}\right)\right)^{3}+1070\left(4\sin\!\left(\tfrac{\pi}{18}\right)\right)^{2}-1239\left(4\sin\!\left(\tfrac{\pi}{18}\right)\right)+453 \\
        &\approx\, -7.98836 \,<\, 0.
    \end{align*}
    Consequently, \eqref{eq - T1/2} and \eqref{eq - T1} are both satisfied and it follows that the symmetrization of $g$ is concave. By \Cref{thm - improvement1}, we finally deduce that $L_n(g)$ is increasing and thus that $L_n(f_b)$ is monotonically increasing for all $b\in \big[\gamma,\frac{3}{2}\big]$.
\end{proof}

\subsubsection{The case \texorpdfstring{$\frac{1}{2}\leq b \leq \gamma$}{½≤b≤γ}}\label{sec - 1}

To deal with this last case, we need the following technical lemma.

\begin{lem}\label{lem - sym_g}
    Let
    \[
    g(x) \,:=\, f_b(x)-\frac{f_b(0)-f_b\big(\frac{1}{2}\big)}{f_1(0)-f_1\big(\frac{1}{2}\big)} f_1(x).
    \]
    Then the symmetrization of $g$ is concave for all $b\in \big[\frac{1}{2},1\big]$ and convex for all $b\in [1,\gamma]$.
\end{lem}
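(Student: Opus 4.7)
The plan is to exploit the linearity of symmetrization: since $g = f_b - t f_1$ with $t := \big(f_b(0) - f_b\big(\tfrac{1}{2}\big)\big)/\big(f_1(0) - f_1\big(\tfrac{1}{2}\big)\big)$, the symmetrization of $g$ satisfies $\mathcal{G}(x) = \mathcal{F}_b(x) - t \mathcal{F}_1(x)$, and the problem reduces to analyzing the sign of $\mathcal{G}''(x)$. A direct computation using $f_b(0) = 1$ and $f_b\big(\tfrac{1}{2}\big) = \tfrac{4}{5-2b}$ gives $t = \tfrac{3(2b-1)}{5-2b}$, which is non-negative on $\big[\tfrac{1}{2},\gamma\big]$, vanishes at $b = \tfrac{1}{2}$, and equals $1$ at $b = 1$. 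At these two extreme values the lemma is immediate: at $b = \tfrac{1}{2}$, $g = f_{1/2}$ has concave symmetrization by \Cref{lem - conc}; at $b = 1$, $g \equiv 0$ and $\mathcal{G} \equiv 0$ is trivially both concave and convex.

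To handle the interior of each subinterval I would adopt the parametrization used in the proof of \Cref{thm - estim_gauche_sup}: set $X := 4\big(x - \tfrac{1}{2}\big)^2 \in [0,1]$, $A := 1 + 6b - 4b^2 - 3X$, $B := 4X(1-b)^2$, and $C := X + 5 - 2b$. In this notation,
\[
\mathcal{F}_b(x) \,=\, \frac{4C}{C^2 - B}, \qquad \mathcal{F}_1(x) \,=\, \frac{4}{X + 3},
\]
and the formula for $\mathcal{F}_b''(x)$ derived in the proof of \Cref{thm - estim_gauche_sup}, specialized to $b = 1$, yields $\mathcal{F}_1''(x) = -96(1 - X)/(X + 3)^3$. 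Consequently,
\[
\mathcal{G}''(x) \,=\, -32\!\left(\frac{3B(B + 3C^2 + AC) + AC^3}{(C^2 - B)^3} - \frac{3t(1 - X)}{(X + 3)^3}\right)\!,
\]
and the lemma reduces to showing that the bracketed expression is non-negative for $(X, b) \in [0,1] \times \big[\tfrac{1}{2},1\big]$ and non-positive for $(X, b) \in [0,1] \times [1, \gamma]$.

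A key observation is that the bracketed expression vanishes identically in $X$ at $b = 1$: specializing to $b = 1$ in the first rational function gives $3(1-X)/(X+3)^3$, which coincides with the second when $t = 1$. This suggests clearing denominators (both of which are strictly positive on the relevant domain by \eqref{eq - pos} and the positivity of $X + 3$) and factoring $(b - 1)$ out of the resulting polynomial numerator. The lemma then reduces to a polynomial inequality in $(X, b)$ on $[0,1] \times \big[\tfrac{1}{2},\gamma\big]$ whose sign is opposite on the two sides of $b = 1$, exactly as required by the two conclusions of the lemma.

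The main obstacle will be establishing this polynomial inequality uniformly in $b$. Imitating the method of \Cref{thm - estim_gauche_sup}, I would substitute $w := 2(b-1)$ to centre the analysis at the sign change and bound the resulting polynomial in $(X,w)$ by its extremal values on the rectangles $[0,1]\times[-1,0]$ and $[0,1] \times [0, 4\sin(\pi/18)]$. Because the crucial sign reversal occurs exactly at $w = 0$, I expect that after factoring $w$ out of the numerator the remaining factor will admit a definite sign that can be checked by monotonicity in $X$ together with a finite boundary evaluation in $w$; the delicacy will lie in verifying that these bounds remain tight at the three critical values $b \in \big\{\tfrac{1}{2}, 1, \gamma\big\}$, where by construction the inequality degenerates to an equality.
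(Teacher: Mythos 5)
Your setup coincides with the paper's: the same reduction $\mathcal{G}=\mathcal{F}_b-t\,\mathcal{F}_1$ with $t=\tfrac{3(2b-1)}{5-2b}$, the same variables $X,A,B,C$, the same closed forms $\mathcal{F}_b=\tfrac{4C}{C^2-B}$ and $\mathcal{F}_1''=-\tfrac{96(1-X)}{(X+3)^3}$, and the same key observation that the numerator of $\mathcal{G}''$ carries a factor of $(b-1)$, so that the lemma reduces to a single polynomial positivity statement on $[0,1]\times\big[\tfrac12,\gamma\big]$. Up to that point the proposal is correct and is exactly the paper's route.

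The gap is in the last step, which is where essentially all of the work lies. After clearing denominators and dividing by $128(b-1)$, one is left with a polynomial $P(X,b)=\sum_{j=0}^{7}c_j(b)X^j$ of degree $7$ in $X$ and degree $6$ in $b$, and the claim is that $P\ge 0$ on the whole rectangle. Your proposed mechanism --- ``monotonicity in $X$ together with a finite boundary evaluation in $w$'' --- is not executed and, as stated, would not go through: $P$ is not monotone in $X$ (it equals $c_0>0$ at $X=0$ and equals $\sum_j c_j=2048(5-2b)(2-b)(b^3-3b^2+3)$ at $X=1$, which tends to $0$ as $b\to\gamma$ because $\gamma$ is a root of $b^3-3b^2+3$), so endpoint evaluation in $X$ cannot by itself exclude a negative dip in the interior, and the inequality is genuinely tight along the edge $b=\gamma$, not only at isolated points. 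The paper instead determines the signs of the individual coefficients ($c_0,c_1\ge 0$ and $c_3,\dots,c_7\le 0$ for $b\in\big[\tfrac12,\gamma\big]$, each requiring its own elementary but nontrivial estimate) and then uses $c_jX^j\ge c_jX^2$ for every $j$ on $X\in[0,1]$ to get $P(X,b)\ge\big(\sum_j c_j\big)X^2\ge 0$, the last inequality holding precisely because $b^3-3b^2+3\ge 0$ on $\big[\tfrac12,\gamma\big]$. Without this (or an equivalent) argument the proof is incomplete; everything before it is bookkeeping.
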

\vspace{-6pt}
\begin{proof}
    The symmetrization of $g$ is equal to $\mathcal{F}_b(x)-tf_1(x)$, where $t:=\frac{f_b(0)-f_b(\frac{1}{2})}{f_1(0)-f_1(\frac{1}{2})}$. As in the proof of \Cref{thm - estim_gauche_sup}, define 
    \begin{gather*}
        X:= 4\big(x-\tfrac{1}{2}\big)^2, ~~\quad A:=1+6b-4b^{2}-3X, \\[-1pt]
        B:=4X(1-b)^{2} ~~\quad\&~~\quad C:=X+5-2b.
    \end{gather*}
    We then have 
    \begin{align*}
        \mathcal{G}''(x) \,&=\, 32\bigg(\frac{3B\big(B+AC+3C^{2}\big)+AC^{3}}{\left(B-C^{2}\right)^{3}}+\frac{3t\left(1-X\right)}{\left(3+X\right)^{3}}\bigg) \\
        &=\, \frac{128\left(b-1\right)\big(c_{7}X^{7}+c_{6}X^{6}+c_{5}X^{5}+c_{4}X^{4}+c_{3}X^{3}+c_{2}X^{2}+c_{1}X+c_{0}\big)}{\left(5-2b\right)\left(C^{2}-B\right)^{3}\left(3+X\right)^{3}},
    \end{align*}
    where 
    \begin{align*}
        c_{0} \,&:=\, 9\left(5-2b\right)^{4}\big(2b^{2}-6b+7\big), \\
        c_{1} \,&:=\, 18\left(5-2b\right)^{2}\big( -\!16b^{4}+128b^{3}-279b^{2}+157b+37\big), \\
        c_{2} \,&:=\, 9\left(5-2b\right)^{2}\big(48b^{4}-128b^{3}+44b^{2}+20b+43\big), \\
        c_{3} \,&:=\, -1152b^{6}+6784b^{5}-12624b^{4}+3296b^{3}+12740b^{2}-9180b-674, \\
        c_{4} \,&:=\, 288b^{6}-672b^{5}-384b^{4}-672b^{3}+5946b^{2}-3198b-2523, \\
        c_{5} \,&:=\, -216b^{4}+288b^{3}+618b^{2}-246b-930, \\
        c_{6} \,&:=\, 52b^{2}-12b-127, \\
        c_{7} \,&:=\, -6.
    \end{align*}
    First, we will show that for any $b\in \big[\frac{1}{2},\gamma\big]$, $c_0$ and $c_1$ are always positive, while $c_3, c_4, c_5, c_6$ and $c_7$ are always negative. Since $\big[\frac{1}{2},\gamma\big] \subseteq \big[\frac{1}{2},\frac{3}{2}\big]$, in most cases we will consider the latter interval for simplicity.

    \medskip
    \noindent\textbf{Case 1 : $\bm{c_0 \geq 0}$.}
    It suffice to see that we can write
    \[
    c_0 \,=\, \tfrac{9}{2}\left(5-2b\right)^{4} \!\big((3-2b)^{2}+5\big).
    \]

    \smallskip
    
    \noindent\textbf{Case 2 : $\bm{c_1 \geq 0}$.}
    We have
    \begin{align*}
        -16b^{4}+128b^{3}-279b^{2}+157b+37 \,&\geq\, -16b^{2}+128b^{3}-279b^{2}+157b+37 \\
        &=\, (128b^{2}-295b+157)b+37
    \end{align*}
    and $128b^{2}-295b+157$ achieves its minimum of $-\frac{6641}{512}$ in $b=\frac{295}{256} \in \big[\frac{1}{2},\gamma\big]$. Therefore, 
    \begin{align*}
        -16b^{4}+128b^{3}-279b^{2}+157b+37 \,&\geq\,  (128b^{2}-295b+157)b+37 \\
        &\geq\, -\tfrac{6641}{512}b+37 \,\geq\, -\tfrac{6641}{512}\gamma+37 \\
        &\approx\, 0.1972 \,>\, 0.
    \end{align*}
    It follows that, 
    \[
    c_{1} \,=\, 18\left(5-2b\right)^{2}\!\big(\!-\!16b^{4}+128b^{3}-279b^{2}+157b+37\big) \,\geq\, 0.
    \]

    \smallskip

    \noindent\textbf{Case 3 : $\bm{c_3 \leq 0}$.}
    To simplify the proof, put $w:= b-\frac{1}{2} \in [0,1]$ to obtain 
    \[
    c_3 = -2\big(576w^{6}-1664w^{5}-8w^{4}+3936w^{3}-3074w^{2}-812w+1131\big) =: T_3(w).
    \]
    We have
    \begin{align*}
        T_3'(w) \,&=\, -8\big(864w^{5}-2080w^{4}-8w^{3}+2952w^{2}-1537w-203\big) \\
        &\geq\, -8\big(864w^{5}-2080w^{4}+2952w^{2}-1537w-203\big) \\
        &=\, -8\big(864w^{4}-2080w^{3}+2952w-1537\big)w+1624 \\
        &=:\, R_3(w)w+1624
    \end{align*}
    Moreover, $R_3$ is a convex function since $R_3''(w) = 1536w\left(65-54w\right) \geq 0$. Consequently, $R_3'$ is maximal in the interval $[0,1]$ at the point $x=1$ and thus
    \begin{align*}
        R_3'(w) = -192(144w^{3}-260w^{2}+123) \leq -192(144-260+123) = -1344 \,<\, 0.
    \end{align*}
    Therefore, $R_3$ is a decreasing function which implies that
    \begin{align*}
        T_3'(w) \,&\geq\, R_3(w)w+1624 \,\geq\, R_3(1)w+1624 \,=\, -1592w+1624 \\
        &\geq\, -1592+1624 \,=\, 32 \,>\, 0.
    \end{align*}
    It follows that $T_3$ is an increasing function and thus that
    \begin{align*}
        c_3 \,&=\, T_3(w) \,\leq\, T_3(1) \,=\, 
        %
        -170 \,<\, 0.
    \end{align*}

    \smallskip

    \noindent\textbf{Case 4 : $\bm{c_4 \leq 0}$.}
    To simplify the proof, put $w:= b-\frac{1}{2} \in [0,1]$ to obtain
    \[
    c_4 \,=\, 288w^{6}+192w^{5}-984w^{4}-2400w^{3}+3792w^{2}+1896w-2760. 
    \]
    Here, we consider $b\in\big[\frac{1}{2},\gamma\big]$ instead of $b\in\big[\frac{1}{2},\frac{3}{2}\big]$ (and consequently, $w\in\big[0,\gamma-\frac{1}{2}\big]$). We then have
    \begin{align*}
        c_4 \,&=\, 288w^{6}+192w^{5}-984w^{4}-2400w^{3}+3792w^{2}+1896w-2760 \\
        &\leq\, 288w^{4}+192w^{4}-984w^{4}-2400w^{3}+3792w^{2}+1896w-2760 \\
        &=\, -24\, (21 w^4 + 100 w^3 - 158 w^2 - 79 w + 115) \,=:\, -24\, T_4(w).
    \end{align*}
    Since $0\leq w \leq \gamma-\frac{1}{2} < 1$, a simple computation yield
    \begin{align*}
        T_4'(w) \,&=\, 84w^{3}+300w^{2}-316w-79 \,\leq\, 84w+300w-316w-79 \\
        &=\, 68w-79 \,\leq\, 68-79 \,=\, -11 \,<\, 0.
    \end{align*}
    Hence, $T_4$ is decreasing in $[0,1]$ and, in particular, for $0\leq w\leq  \gamma-\frac{1}{2}\leq \frac{7}{8} $. It follows that 
    \begin{align*}
        c_4 \,&\leq\, -24\, T_4(w) \,\leq\, -24\, T_4\big(\gamma-\tfrac{1}{2}\big) \,\leq\, -24\, T_4\big(\tfrac{7}{8}\big) \\
        &=\, -24\left( 21\left(\tfrac{7}{8}\right)^{\!4}+100\left(\tfrac{7}{8}\right)^{\!3}-158\left(\tfrac{7}{8}\right)^{\!2}-79\left(\tfrac{7}{8}\right)+115 \right) \\
        &=\, -\tfrac{51711}{512} \,<\, 0.
    \end{align*}

    \smallskip

    \noindent\textbf{Case 5 : $\bm{c_5 \leq 0}$.}
    To simplify the proof, put $w:= b-\frac{1}{2} \in [0,1]$ to obtain
    \begin{align*}
        c_5 \,&=\, -216w^{4}-144w^{3}+726w^{2}+480w-876 \\
        &\leq\, -216w^{4}-144w^{3}+726w^{2}+480-876 \\
        &=\, -216w^{4}-144w^{3}+726w^{2}-396 \,=:\, T_5(w).
    \end{align*}
    A simple computation yield
    \begin{align*}
        T_5'(w) \,=\, 12w\big(121-36w-72w^{2}\big) \,\geq\, 12w\left(121-36-72\right) \,=\, 156w \,\geq\, 0.
    \end{align*}
    Hence, the maximum of $T_5$ is achieved at $w=1$ and we have
    \begin{align*}
        c_5 \,&\leq\, T_5(w) \,\leq\, T_5(1) \,=\,  -216-144+726-396 \,=\, -30 \,<\, 0.
    \end{align*}

    \smallskip

    \noindent\textbf{Case 6 : $\bm{c_6 \leq 0}$.}
    To simplify the proof, put $w:= b-\frac{1}{2} \in [0,1]$ to obtain
    \[
    c_5 \,=\, 52w^{2}+40w-120 \,\leq\, 52+40-120 \,=\, -28 \,<\,0. 
    \]

    \smallskip

    \noindent\textbf{Case 7 : $\bm{c_7 \leq 0}$.}
    Trivial.

    \medskip

    \noindent Therefore, we have $c_0,c_1\geq 0$ and $c_3, c_4, c_5, c_6, c_7\leq 0$ for all $b\in \big[\frac{1}{2},\gamma\big]$ and it follows that
    \begin{align*}
        c_{7}X^{7}&+c_{6}X^{6}+c_{5}X^{5}+c_{4}X^{4}+c_{3}X^{3}+c_{2}X^{2}+c_{1}X+c_{0} \\
        %
        %
        &\geq\, c_{7}X^{2}+c_{6}X^{2}+c_{5}X^{2}+c_{4}X^{2}+c_{3}X^{2}+c_{2}X^{2}+c_{1}X^2+c_{0}X^2 \\
        &=\, (c_{7}+c_{6}+c_{5}+c_{4}+c_{3}+c_{2}+c_{1}+c_{0})X^2 \\
        &=\, 2048\left(5-2b\right)\left(2-b\right)\big(b^{3}-3b^{2}+3\big)X^2.
    \end{align*}
    Moreover, recall the inequality \eqref{eq - pos} ensuring the positivity of $C^2-B$. Hence, we find that 
    \begin{align*}
        R(x) \,:=\, \frac{c_{7}X^{7}+c_{6}X^{6}+c_{5}X^{5}+c_{4}X^{4}+c_{3}X^{3}+c_{2}X^{2}+c_{1}X+c_{0}}{\left(5-2b\right)\left(C^{2}-B\right)^{3}\left(3+X\right)^{3}} \,\geq\, 0.
    \end{align*}
    Since $\mathcal{G}''(x) = 128(b-1)R(x)$, we can therefore conclude that $\mathcal{G}''(x)$ is negative if $b\leq 1$ and positive if $b\geq 1$. Consequently, the symmetrization of $g$ is concave for all $b\in \big[\frac{1}{2},1\big]$ and convex for all $b\in [1,\gamma]$. 
\end{proof}

In the interval $\frac{1}{2} \leq b \leq 1$ (resp. $1 \leq b \leq \gamma$), $L_n(f_b)$ (resp. $R_n(f_b)$) numerically appears to be non-monotonic. Hence, let us consider only the cases of the right Riemann sum when $\frac{1}{2} \leq b \leq 1$ and of the left Riemann sum when $1 \leq b \leq \gamma$. It turns out that to show the monotonicity of these Riemann sums, it is sufficient to consider only the case of $b=1$.  

\begin{prop}
    Let $f_b(x)=\frac{1}{1-bx+x^2}$. Suppose that $L_n(f_1)=R_n(f_1)$ is increasing. Then $L_n(f_b)$ is monotonically increasing for all $b \in [1,\gamma]$ and $R_n(f_b)$ is monotonically increasing for all $b \in \big[\frac{1}{2},1\big]$.
\end{prop}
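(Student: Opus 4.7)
The plan is to reduce the proposition directly to \Cref{lem - sym_g} via the decomposition $f_b = g + t f_1$, where $g$ is the auxiliary function introduced in that lemma and $t := (f_b(0) - f_b(1/2))/(f_1(0) - f_1(1/2))$. By construction $g(0) = g(1/2)$, and by linearity of Riemann sums, $L_n(f_b) = L_n(g) + t\,L_n(f_1)$ and $R_n(f_b) = R_n(g) + t\,R_n(f_1)$. Since $f_1(0) = f_1(1) = 1$, we have $L_n(f_1) = R_n(f_1)$, so the hypothesis of the proposition makes both of these sequences increasing.

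Before applying \Cref{lem - sym_g}, I would verify the sign of $t$. A quick computation gives $f_b(1/2) = 4/(5-2b)$ and hence $f_b(0) - f_b(1/2) = (1-2b)/(5-2b)$; specialising to $b = 1$ yields $f_1(0) - f_1(1/2) = -1/3$, so that $t = 3(2b-1)/(5-2b) \geq 0$ for every $b \in [1/2,\gamma]$. It therefore suffices to show that $R_n(g)$ is increasing when $b \in [1/2,1]$ and that $L_n(g)$ is increasing when $b \in [1,\gamma]$.

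But these are exactly the situations handled by \Cref{thm - improvement1}. For $b \in [1/2,1]$, \Cref{lem - sym_g} asserts that the symmetrization of $g$ is concave, and the equality $g(0) = g(1/2)$ trivially gives $g(0) \geq g(1/2)$, so part~(1) of \Cref{thm - improvement1} yields $R_n(g)$ increasing. For $b \in [1,\gamma]$, the symmetrization of $g$ is instead convex (again by \Cref{lem - sym_g}), and $g(0) = g(1/2)$ equally satisfies $g(0) \leq g(1/2)$, so part~(4) of \Cref{thm - improvement1} yields $L_n(g)$ increasing. Combining with $t \geq 0$ and the hypothesis on $f_1$ closes both cases.

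Since \Cref{lem - sym_g} and \Cref{thm - improvement1} have already absorbed all of the analytic difficulty, the proof itself is essentially a bookkeeping exercise: all the real work sits in \Cref{lem - sym_g}, whose job is precisely to make the decomposition $f_b = g + t f_1$ useful by producing a $g$ whose symmetrization has a definite concavity type. The only genuine calculation in the proof proper is the non-negativity of $t$, which is a few lines of arithmetic, so I do not anticipate any real obstacle.
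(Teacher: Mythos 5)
Your proposal is correct and follows essentially the same route as the paper: the same decomposition $f_b = g + t f_1$ with $t = \frac{3(2b-1)}{5-2b} \geq 0$, the same appeal to \Cref{lem - sym_g} for the concavity/convexity of the symmetrization of $g$, and the same application of \Cref{thm - improvement1} using $g(0)=g\big(\tfrac{1}{2}\big)$. If anything, you are slightly more explicit than the paper in checking the boundary hypotheses of \Cref{thm - improvement1}.
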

\begin{proof}
    Consider the function 
    \[
    g(x) \,:=\, f_b(x)-\frac{f_b(0)-f_b\big(\frac{1}{2}\big)}{f_1(0)-f_1\big(\frac{1}{2}\big)} f_1(x).
    \]
    It is not hard to verify that $t:= \frac{f_b(0)-f_b(\frac{1}{2})}{f_1(0)-f_1(\frac{1}{2})} = \frac{3(2b-1)}{5-2b} \geq 0$ for all $\frac{1}{2}\leq b \leq \gamma $. Hence, since $L_n(f_b)=L_n(g)+tL_n(f_1)$ and $L_n(f_1)$ is assumed to be increasing, we only need to prove that $L_n(g)$ is also increasing. 
    Now, the symmetrization of $g$ is convex for all $b\in[1,\gamma]$ by \Cref{lem - sym_g}. Moreover, we have $g(0)=g\big(\frac{1}{2}\big)$ by construction. Hence, it follows from \Cref{thm - improvement1} that $L_n(g)$ is increasing for all $b\in[1,\gamma]$, and thus that $L_n(f_b)$ is also monotonically increasing for all $b\in[1,\gamma]$.

    Similarly, the symmetrization of $g$ is concave for all $b \in \big[\frac{1}{2},1\big]$ by \Cref{lem - sym_g} and \Cref{thm - improvement1} then ensures that $R_n(g)$ is increasing. Hence, since $R_n(f_b)=R_n(g)+tR_n(f_1)$, it follows that $R_n(f_b)$ is monotonically increasing for all $b \in \big[\frac{1}{2},1\big]$.
\end{proof}

In light of this result, all that remains to show to complete the proof of \Cref{thm - main_fb} is that $L_n(f_1)=R_n(f_1)$ is a monotonically increasing function. Since the proof of this last result is long and technical, it will be presented in the following section.

\subsection{Proof of \texorpdfstring{\Cref{thm - main_fb} when $b=1$}{Theorem 3.1 when b=1}} \label{sec - f1}

\subsubsection{Preliminary estimates}

Before addressing the statement and the proof of the monotonicity of the Riemann sums of $f_1$, we need to prove some technical inequalities.

\vspace{-5pt}
\begin{ineq}\label{lem - ineq4}
    For all $0\leq\alpha\leq 1$ and all $x\geq 0$, 
    \[
    \alpha\sinh(x)+\cosh(x) \,\le\, e^{x}
    \]
\end{ineq}
\vspace{-9pt}
\begin{proof}
    By definition, we have 
    \begin{equation*}
        \alpha\sinh(x)+\cosh(x) = \alpha\frac{e^{x}-e^{-x}}{2}+\frac{e^{x}+e^{-x}}{2} = \frac{1}{2}(1+\alpha)e^{x}+\frac{1}{2}(1-\alpha)e^{-x}.
    \end{equation*}
    Thus, we have the desired inequality if and only if 
    \begin{equation*}
        \tfrac{1}{2}(1+\alpha)+\tfrac{1}{2}(1-\alpha)e^{-2x} \,\leq\, 1.
    \end{equation*}
    Simplifying this last expression, we find that it is equivalent to having $e^{-2x} \leq 1$ (since $\alpha\geq 1$), which is obviously verified.
\end{proof}

\vspace{-5pt}
\begin{ineq}\label{lem - ineq2}
    For all $x> 0,$ we have
    \[
    \coth(x) \,\leq\, \frac{1}{x} + \frac{x}{3}.
    \]
\end{ineq}
\vspace{-9pt}
\begin{proof}
    Writing $\coth(x) = 1+\frac{2}{e^{2x}-1}$, simplifying and setting $x$ instead of $2x$, we find that the desired inequality is equivalent to having $h(x):={\frac{x^{2}+6x+12}{x^{2}-6x+12} e^{-x} \leq 1}$. Differentiating $h$ yield 
    \[
    h'(x) \,=\, -\frac{e^{-x}x^{4}}{\left(x^{2}-6x+12\right)^{2}} \,\leq\, 0
    \]
    and thus, $h(x) \leq h(0) =1$, which completes the proof.
\end{proof}

\vspace{-5pt}
\begin{ineq}\label{lem - ineq3}
    For all $x\geq 0,$ we have
    \[
    \left(x\coth\!\big(\tfrac{x}{2}\big)-2\right)\operatorname{csch}^{2}\!\big(\tfrac{x}{2}\big) \,\leq\, e^{-\frac{x}{3}}.
    \]
\end{ineq}
\vspace{-9pt}
\begin{proof}
    At $x=0$, a direct computation establishes the inequality. If $x>0$, replace $x$ by $2x$ without any loss of generality and observe that the desired inequality is equivalent to having $ 2\frac{x\coth(x)-1}{\sinh^2(x)}e^{\frac{2x}{3}} \leq 1$. Now, \Cref{lem - ineq2} ensures that
    \begin{align*}
        2\,\frac{x\coth(x)-1}{\sinh^2(x)}e^{\frac{2x}{3}} \,&\leq\, 2\,\frac{x\big(\tfrac{1}{x}+\tfrac{x}{3}\big)-1}{\sinh^2(x)} e^{\frac{2x}{3}} \,=\, \frac{2}{3}\bigg(\frac{xe^{\frac{x}{3}}}{\sinh(x)}\bigg)^{\!2}. \label{eq - simple1}
    \end{align*}
    Since $\frac{xe^{\frac{x}{3}}}{\sinh(x)}\geq 0$, the desired inequality is equivalent to having $\frac{xe^{\frac{x}{3}}}{\sinh\left(x\right)}\le\sqrt{\rule{0pt}{8.2pt}\vphantom{y}\hspace{-.5pt}\smash{\frac{3}{2}}}$. Writing $\sinh(x)$ in its exponential form, we further find that this is equivalent to $\frac{xe^{4x}}{e^{7x}-1}\le\frac{1}{\sqrt{24}}$. But it is easy to show that $\frac{xe^{4x}}{e^{7x}-1}\le\frac{xe^{3x}}{e^{6x}-1}$ and it is well known that $\sinh(x)\geq x$ for every $x\geq 0$. Therefore, we finally have
    \[
    \frac{xe^{4x}}{e^{7x}-1} \,\le\, \frac{xe^{3x}}{e^{6x}-1} \,=\, \frac{1}{6}\frac{3x}{\sinh\left(3x\right)} \,\le\, \frac{1}{6} \,<\, \frac{1}{\sqrt{24}},
    \]
    which completes the proof.
\end{proof}

\vspace{-5pt}
\begin{ineq}\label{lem - ineq1}
    For all $x\geq 0$, we have
    \[
    \left(\sinh\left(x\right)-x\right)\operatorname{csch}^{2}\!\big(\tfrac{x}{2}\big) \,\geq\, 2-2e^{-\frac{x}{3}}.
    \]
\end{ineq}
\vspace{-11pt}
\begin{proof}
    If $x=0$, a direct computation establishes the inequality. If $x>0$, the statement is equivalent to having $h(x):=\frac{x+e^{-x}-1}{\cosh(x)-1}e^{\frac{x}{3}}\leq 1$. Differentiating the latter expression yield
    \[
    h'(x) \,=\, -\frac{e^{-\frac{5x}{3}}\left(e^{x}-1\right)\big(4e^{x}\left(x+1\right)+e^{2x}\left(2x-5\right)+1\big)}{6\left(\cosh\left(x\right)-1\right)^{2}}.
    \]
    Using the Taylor expansion of $e^x$, we readily find that
    \[
    4e^{x}\left(x+1\right)+e^{2x}\left(2x-5\right)+1 = \sum_{n=4}^{\infty}\left(4n+4+n2^{n}-5\cdot2^{n}\right)\frac{x^{n}}{n!} =:\, \sum_{n=4}^{\infty}a_n\frac{x^{n}}{n!}.
    \]
    A direct calculation shows that $a_4 = 4>0$, while for $n\geq 5$
    \[
    a_n=4n+4+n2^{n}-5\cdot2^{n} \,\geq\, 4n+4+5\cdot2^{n}-5\cdot2^{n} = 4n+4 \geq 0.
    \]
    Hence, $a_n\geq 0$ for all $n\geq 4$ and it follows that $4e^{x}\left(x+1\right)+e^{2x}\left(2x-5\right)+1\geq0$. Therefore, $h'(x)\leq 0$ for $x>0$ and thus, $h(x) \leq \lim_{x\to 0^+} h(x) = 1$, which concludes the proof.
\end{proof}

\vspace{-4pt}
\smallskip
\subsubsection{Reformulating \texorpdfstring{$R_n(f_1)$}{Rn(f1)}}

We are now ready to address the proof of the monotonicity of the Riemann sums of $f_1$. For completeness, let us first state formally the result.

\vspace{-2pt}
\begin{thm}\label{thm - fb_1}
    Let $f_b(x)=\frac{1}{1-bx+x^2}$. Then $L_n(f_1)=R_n(f_1)$ is monotonically increasing.
\end{thm}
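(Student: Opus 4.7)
Since $f_1$ is symmetric about $x=\tfrac{1}{2}$, we have $R_n(f_1) = L_n(f_1)$, so it suffices to prove that $n \mapsto R_n(f_1)$ is monotonically increasing. The plan is to derive a tractable integral representation of $R_n(f_1)$ via the Laplace transform and the Residue theorem for sums, and then exploit that representation together with the preliminary Inequalities~1--4 to establish monotonicity.

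Starting from the factorization $n^2 - nk + k^2 = (k-n/2)^2 + 3n^2/4$, the Laplace identity $\tfrac{1}{a^2+y^2} = \tfrac{1}{a}\int_0^\infty e^{-at}\cos(yt)\,dt$ with $a = n\sqrt{3}/2$, and the Dirichlet-type sum $\sum_{k=1}^n \cos((k-n/2)t) = \tfrac{\cos(t/2)\sin(nt/2)}{\sin(t/2)}$, one obtains
\[
R_n(f_1) \;=\; \frac{2}{\sqrt{3}}\int_0^\infty e^{-n\sqrt{3}\,t/2}\,\frac{\cos(t/2)\sin(nt/2)}{\sin(t/2)}\,dt.
\]
The plan then is to deform the contour --- equivalently, to apply Poisson summation to $g(k) := \tfrac{n}{(k-n/2)^2+3n^2/4}$ (whose Fourier transform works out to $\tfrac{2\pi}{\sqrt{3}}e^{-i\pi n\xi}e^{-\pi n\sqrt{3}|\xi|}$), to use the symmetry $g(k) = g(n-k)$ to pass from the bilateral sum to $\sum_{k=1}^n$, and to invoke Binet's integral representation of $\psi(n\omega)$ with $\omega := e^{i\pi/3}$ --- to recast the right-hand side in the cleaner form
\[
R_n(f_1) \;=\; \frac{2\pi}{\sqrt{3}}\,H(n) \;-\; \frac{4\pi}{3\sqrt{3}} \;-\; 4\,J(n), \qquad J(n) := \int_0^\infty \frac{s\,ds}{(s^4-s^2+1)(e^{2\pi n s}-1)},
\]
where $H(n) = \coth(n\pi\sqrt{3}/2)$ if $n$ is even and $H(n) = \tanh(n\pi\sqrt{3}/2)$ if $n$ is odd. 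The parity arises naturally from $e^{2\pi i n\omega} = (-1)^n e^{-\pi n\sqrt{3}}$.

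Since the integrand defining $J(n)$ is strictly pointwise decreasing in $n$, the contribution $-4J(n)$ is strictly increasing in $n$; and when $n$ is \emph{odd}, one also has $H(n+1) - H(n) = \coth - \tanh > 0$, so monotonicity $R_{n+1}(f_1) > R_n(f_1)$ is immediate. The main obstacle is the \emph{even}-$n$ case, where $H(n+1) - H(n) < 0$ and one must establish
\[
4\bigl(J(n) - J(n+1)\bigr) \;>\; \frac{2\pi}{\sqrt{3}}\Bigl(\coth\!\bigl(\tfrac{n\pi\sqrt{3}}{2}\bigr) - \tanh\!\bigl(\tfrac{(n+1)\pi\sqrt{3}}{2}\bigr)\Bigr).
\]
The right-hand side is $O(e^{-n\pi\sqrt{3}})$, whereas the left-hand side is $O(1/n^2)$, so the inequality is amply plausible for every $n \geq 2$; what makes it rigorous is precisely the preliminary Inequalities~1--4. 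Inequality~3 furnishes the upper bound $(x\coth(x/2)-2)\operatorname{csch}^2(x/2) \leq e^{-x/3}$ used to control the $\coth-\tanh$ difference from above, while Inequality~4 gives the matching lower bound $(\sinh(x)-x)\operatorname{csch}^2(x/2) \geq 2 - 2e^{-x/3}$ needed to bound $J(n)-J(n+1)$ from below, and Inequalities~1 and~2 serve as auxiliary tools to reduce the resulting expressions to elementary polynomial/exponential comparisons.
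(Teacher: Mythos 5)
Your route differs from the paper's in a useful way: instead of extending $R_n(f_1)$ to real argument and bounding the derivatives of the two pieces (which is what the paper does, via the Residue theorem for sums, the Laplace transform, and the four preliminary inequalities), you work with the discrete difference $R_{n+1}(f_1)-R_n(f_1)$ directly. Your main term $\tfrac{2\pi}{\sqrt{3}}H(n)$ is in fact identical to the paper's $F_1(n)=\tfrac{2\pi\sinh(\sqrt{3}\pi n)}{\sqrt{3}(\cosh(\sqrt{3}\pi n)-\cos(\pi n))}$ (the parity split into $\tanh$/$\coth$ is just $\cos(\pi n)=(-1)^n$), and your remainder $\tfrac{4\pi}{3\sqrt 3}+4J(n)$ must coincide with the paper's $F_2(n)=\tfrac1n+\tfrac2n\sum_{k\ge1}f_{-1}(k/n)$; I checked both claims numerically at $n=1,2$ and they are consistent. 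The payoff of your form is real: the $n$-dependence of $J(n)$ sits entirely in the factor $(e^{2\pi ns}-1)^{-1}$, so $J(n)$ is manifestly decreasing, and for odd $n$ monotonicity is immediate since $\coth>\tanh$. This is structurally cleaner than the paper's derivative estimates.

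However, as written there are two genuine gaps. First, the central identity $R_n(f_1)=\tfrac{2\pi}{\sqrt{3}}H(n)-\tfrac{4\pi}{3\sqrt 3}-4J(n)$ is announced as a ``plan'' (contour deformation / Poisson summation / Binet) but never derived; the bilateral-sum evaluation is standard, but the conversion of the tail $\tfrac1n+\tfrac2n\sum_{k\ge1}f_{-1}(k/n)$ into the specific integral $\tfrac{4\pi}{3\sqrt 3}+4J(n)$ is a nontrivial Abel--Plana/Binet-type step that must actually be carried out. Second, and more seriously, the even-$n$ case is not proved. You correctly identify that one must show $4\bigl(J(n)-J(n+1)\bigr)$ exceeds the exponentially small quantity $\tfrac{2\pi}{\sqrt 3}\bigl(\tfrac{2}{e^{n\pi\sqrt3}-1}+\tfrac{2}{e^{(n+1)\pi\sqrt3}+1}\bigr)$, and this is true with enormous room to spare, but ``amply plausible'' is not a proof, and the appeal to the paper's Inequalities~1--4 does not close it: Inequality~3 bounds $(x\coth(x/2)-2)\operatorname{csch}^2(x/2)$ and Inequality~4 bounds $(\sinh x - x)\operatorname{csch}^2(x/2)$, which are derivative/value bounds for the kernel appearing in the paper's $F_2'$ computation; neither has any direct bearing on $\coth-\tanh$ (which is elementary to bound anyway) or on $J(n)-J(n+1)$. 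You need an explicit lower bound for $J(n)-J(n+1)$ --- e.g.\ restrict to $s\in(0,1)$ where $s^4-s^2+1\le1$, so $J(n)-J(n+1)\ge\int_0^1 s\bigl((e^{2\pi ns}-1)^{-1}-(e^{2\pi(n+1)s}-1)^{-1}\bigr)ds$, and then a clean polynomial-in-$1/n$ lower bound --- together with the elementary upper bound $\tfrac{4\pi}{\sqrt3}\cdot\tfrac{2}{e^{n\pi\sqrt3}-1}$ for the right-hand side, valid for all even $n\ge2$. Until those two steps are supplied, the argument is a promising outline rather than a proof.
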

\vspace{-2pt}

Our first step in the proof is to express $R_n(f_1)$ as a sum of functions which are somehow simpler to study. To do this, first note that we can write 
\begin{equation}
    \smash{R_n(f_1) \,=\, \frac{1}{n}\sum_{k=1}^n f_1\big( \tfrac{k}{n}\big) \,=\, \frac{1}{n}\sum_{k=1}^\infty \left[ f_1\big( \tfrac{k}{n}\big) - f_1\big( \tfrac{k}{n}+1\big) \right].}
\end{equation}
Indeed, both $\sum_{k=1}^\infty f_1\!\left( \frac{k}{n}\right)$ and $\sum_{k=1}^\infty f_1\!\left( \frac{k}{n}+1\right)$ are convergent for all $n$ and
\[
\smash{    \sum_{k=1}^\infty \left[ f_1\big( \tfrac{k}{n}\big) - f_1\big( \tfrac{k}{n}+1\big) \right] \,=\, \sum_{k=1}^\infty  f_1\big( \tfrac{k}{n}\big) -\!\! \sum_{k=n+1}^\infty \!\!f_1\big( \tfrac{k}{n}\big) \,=\, \sum_{k=1}^n f_1\big( \tfrac{k}{n}\big).}
\]
Observe that $\sum_{k=1}^\infty f_1\big( \frac{k}{n}+1\big) = \sum_{k=1}^\infty f_{-1}\big( \frac{k}{n}\big)$ and that we can thus write 
\vspace{-6pt}
\begin{align*}
    R_n(f_1) &= \frac{1}{n}\sum_{k=1}^\infty  f_1\big( \tfrac{k}{n}\big) - \frac{1}{n}\sum_{k=1}^\infty f_{-1}\big( \tfrac{k}{n}\big) \\[-2pt]
    &= \frac{1}{n}\sum_{k=1}^\infty  \left[f_1\big( \tfrac{k}{n}\big) + f_{-1}\big( \tfrac{k}{n}\big) \right] - \frac{2}{n}\sum_{k=1}^\infty f_{-1}\big( \tfrac{k}{n}\big) \\[-2pt]
    &= 2n\sum_{k=1}^\infty \frac{n^2+k^2}{n^4+n^2k^2+k^4} - \frac{2}{n}\sum_{k=1}^\infty f_{-1}\big( \tfrac{k}{n}\big) \\[-2pt]
    %
    %
    &= n\sum_{k=-\infty}^\infty \frac{n^2+k^2}{n^4+n^2k^2+k^4} - \frac{1}{n} - \frac{2}{n}\sum_{k=1}^\infty f_{-1}\big( \tfrac{k}{n}\big).
\end{align*}
The sum $\sum_{k=-\infty}^\infty \frac{n^2+k^2}{n^4+n^2k^2+k^4}$ is a sum over all integers $k$ and the singularities of the function $g_n(z) : = \frac{n^2+z^2}{n^4+n^2z^2+z^4}$ are $\frac{\pm 1 \pm \sqrt{3}i}{2}n$ (in particular, no integer is a singularity). Moreover, $g_n(z)$ is meromorphic and satisfies
\begin{align*}
    |g(z)|^2 \,&=\, \frac{n^{4}+r^{4}+2\sqrt{n^{8}-n^{4}r^{4}+r^{8}}}{3\left(n^{4}-r^{4}\right)^{2}} \,\leq\, \frac{n^{4}+r^{4}+2\sqrt{n^{8}+2n^{4}r^{4}+r^{8}}}{3\left(n^{4}-r^{4}\right)^{2}} \\
    &=\, \frac{n^{4}+r^{4}+2|n^{4}+r^{4}|}{3\left(n^{4}-r^{4}\right)^{2}} \,=\, \frac{n^{4}+r^{4}}{\left(n^{4}-r^{4}\right)^{2}},
\end{align*}
where $r=|z|$. For $r\geq n+1$, we thus have
\begin{align*}
    |g(z)|^2 \,&\leq\, \frac{n^{4}+r^{4}}{\left(n^{4}-r^{4}\right)^{2}} \,\leq\, \frac{(n^{4}+\left(n+1\right)^{4})\left(n+1\right)^{4}}{(\left(n+1\right)^{4}-n^{4})^{2}r^{4}} \,=:\, \frac{M_n^2}{r^4}.
\end{align*}
Consequently, for each $n$ we have that $|g_n(z)| \leq M_n/|z|^2$ for all $|z|\geq n+1$. Thus, by the Residue theorem for sums (see \cite[Theorem 4.4.1]{complex}), we have
\begin{equation*}
    \sum_{k=-\infty}^\infty \frac{n^2+k^2}{n^4+n^2k^2+k^4} \,= \sum_{k=-\infty}^\infty g_n(k) \,=\, - \sum_{\zeta} \mathrm{Res}\left( \pi\cot(\pi z)g_n(z), \zeta\right),
\end{equation*}
where the sum on the right-hand side varies over the singularities of $g_n(z)$. 
Since the singularities are poles of order 1, a direct computation then yield
\begin{equation*}
    - \sum_{\zeta} \mathrm{Res}\left( \pi\cot(\pi z)g_n(z), \zeta\right) \,=\, \frac{2\pi\sinh\!\left(\sqrt{3}\pi n\right)}{\sqrt{3}n\left(\cosh\!\left(\sqrt{3}\pi n\right)-\cos(\pi n)\right)}.
\end{equation*}
Therefore, we find that 
\begin{align}
        R_n(f_1) \,&=\, n\sum_{k=-\infty}^\infty \frac{n^2+k^2}{n^4+n^2k^2+k^4} - \frac{1}{n} - \frac{2}{n}\sum_{k=1}^\infty f_{-1}\big( \tfrac{k}{n}\big) \nonumber \\
        &=\, \frac{2\pi\sinh\!\left(\sqrt{3}\pi n\right)}{\sqrt{3}\left(\cosh\!\left(\sqrt{3}\pi n\right)-\cos(\pi n)\right)} - \bigg(\frac{1}{n} + \frac{2}{n}\sum_{k=1}^\infty f_{-1}\big( \tfrac{k}{n}\big) \bigg) \label{eq - simple} \\
        &=:\, F_1(n)-F_2(n). \nonumber
\end{align}
Note that \eqref{eq - simple} is defined for any $n\in\mathbb{R}$ (and not only for $n\in\mathbb{N}$) and is differentiable with
\begin{equation*}
    \frac{d}{dx} R_x(f_1) \,=\, F_1'(x)-F_2'(x).
\end{equation*}
Therefore, if the derivative of $R_x(f_1)$ is positive for every $x\geq 1$, then $R_x(f_1)$ will be increasing for all $x\geq 1$ and in particular for every integers $x=n$. To show that this is the case, we first estimate $F_1'$ and $F_2'$ separately, and then combine these to bound the derivative of $R_x(f_1)$.

\smallskip
\subsubsection{Estimation of the derivative of \texorpdfstring{$F_1$}{F1}}

Since we are only interested in the monotonicity of $R_x(f_1)$ for $x\geq 1$, we make this assumption in this section and all the following ones. Now, a direct calculation yield 
\begin{align*}
    F_1'(x) \,&=\, 2\pi^{2}\frac{1-\frac{1}{\sqrt{3}}\sin\!\left(\pi x\right)\sinh\!\left(\sqrt{3}\pi x\right)-\cos\!\left(\pi x\right)\cosh\!\left(\sqrt{3}\pi x\right)}{\left(\cosh\!\left(\sqrt{3}\pi x\right)-\cos\left(\pi x\right)\right)^{2}} \\
    &\geq\, 2\pi^{2}\frac{1-\frac{1}{\sqrt{3}}\sinh\!\left(\sqrt{3}\pi x\right)-\cosh\!\left(\sqrt{3}\pi x\right)}{\left(\cosh\!\left(\sqrt{3}\pi x\right)-1\right)^{2}}.
\end{align*}
Moreover, an application of \Cref{lem - ineq4} reveal that 
\begin{align*}
    F_1'(x) \,&\geq\, 2\pi^{2}\frac{1-\frac{1}{\sqrt{3}}\sinh\!\left(\sqrt{3}\pi x\right)-\cosh\!\left(\sqrt{3}\pi x\right)}{\left(\cosh\!\left(\sqrt{3}\pi x\right)-1\right)^{2}} \,\geq\,  \frac{2\pi^{2}(1-e^{\sqrt{3}\pi x})}{\left(\cosh\!\left(\sqrt{3}\pi x\right)-1\right)^{2}}.
\end{align*}
Therefore, if $x\geq 1$, we have
\begin{align*}
    F_1'(x) \,&\geq\,  \frac{2\pi^{2}(1-e^{\sqrt{3}\pi x})}{\left(\cosh\left(\sqrt{3}\pi x\right)-1\right)^{2}} \,\geq\, \frac{2\pi^{2}(1-e^{\sqrt{3}\pi x})}{\big(\frac{e^{\sqrt{3}\pi x}}{2}-1\big)^{2}} \,=\, \frac{-8\pi^{2}}{e^{\sqrt{3}\pi x}-2}\bigg(\frac{1}{e^{\sqrt{3}\pi x}-2}+1\bigg) \\
    &\geq\, \frac{-8\pi^{2}}{e^{\sqrt{3}\pi x}-2}\bigg(\frac{1}{e^{\sqrt{3}\pi }-2}+1\bigg) \,=\, \frac{-8\pi^{2}}{1-2e^{-\sqrt{3}\pi x}}\bigg(\frac{1}{e^{\sqrt{3}\pi }-2}+1\bigg)\cdot \frac{1}{e^{\sqrt{3}\pi x}} \\
    &\geq\, -\frac{8\pi^{2}}{1-2e^{-\sqrt{3}\pi}}\bigg(\frac{e^{\sqrt{3}\pi}}{e^{\sqrt{3}\pi}-2}+1\bigg)\cdot\frac{1}{e^{\sqrt{3}\pi x}} \,\geq\, -\frac{80}{e^{\sqrt{3}\pi x}}.
\end{align*}
The second inequality derives from the fact that $\cosh(x) = \frac{e^x+e^{-x}}{2} \geq \frac{e^x}{2}$, the third follows from the fact that $e^{\sqrt{3}\pi x}$ is increasing and the fourth from the decreasing nature of the function $\frac{1}{1-2e^{-x}}$ for $x\geq 1$. The last inequality is only present to simplify the writing. However, it is not a bad one, since $\frac{8\pi^{2}}{1-2e^{-\sqrt{3}\pi}}\Big(\frac{e^{\sqrt{3}\pi}}{e^{\sqrt{3}\pi}-2}+1\Big) \approx 79.995$.

\smallskip
\subsubsection{Estimation of the derivative of \texorpdfstring{$F_2$}{F2}}

Let us now focus on the derivative of $F_2$. Our first step is to notice, using the inverse Laplace transform or via direct computation, that 
$f_{-1}(x)=\frac{1}{1+x+x^2}$ is the Laplace transform of the function $\smash{\frac{2}{\sqrt{3}}} e^{-t/2} \sin\!\smash{\big( \frac{\sqrt{3}t}{2}\big)}$. In other words, we have
\begin{equation}
    f_{-1}(x) \,=\, \frac{2}{\sqrt{3}} \int_0^\infty e^{-t/2} \sin\!\left( \tfrac{\sqrt{3}t}{2}\right) e^{-xt} dt.
\end{equation}
It thus follows from Fubini's theorem that 
\begin{align*}
    F_2(x) \,&=\, \frac{1}{x} + \frac{2}{x}\sum_{k=1}^\infty f_{-1}\big( \tfrac{k}{x}\big) \,=\, \frac{1}{x} + \frac{2}{x}\sum_{k=1}^\infty \frac{2}{\sqrt{3}} \int_0^\infty e^{-t/2} \sin\!\left( \tfrac{\sqrt{3}t}{2}\right) e^{-\frac{tk}{x}} dt \\
    &=\, \frac{1}{x} +  \frac{4}{\sqrt{3}x} \int_0^\infty e^{-t/2} \sin\!\left( \tfrac{\sqrt{3}t}{2}\right) \!\left(\sum_{k=1}^\infty e^{-\frac{tk}{x}} \right) dt \\[-3pt]
    &=\, \frac{1}{x} +  \frac{4}{\sqrt{3}x} \int_0^\infty  \frac{e^{-t/2} \sin\big( \frac{\sqrt{3}t}{2}\big)}{e^{\frac{t}{x}}-1} dt \\
    &=\, \frac{1}{x} +  \frac{8\pi}{3x}\int_{0}^{\infty}\frac{e^{-\frac{\pi u}{\sqrt{3}}}\sin(\pi u)}{e^{\frac{2\pi u}{\sqrt{3}x}}-1}du.
\end{align*}
Moreover, it is easily verified that $\frac{4\pi}{3}\int_{0}^{\infty} e^{-\frac{\pi u}{\sqrt{3}}}\sin(\pi u)du = 1.$ Hence, it follows that
\begin{align*}
    F_2(x) \,&=\, \frac{1}{x} +  \frac{8\pi}{3x}\int_{0}^{\infty}\frac{e^{-\frac{\pi u}{\sqrt{3}}}\sin(\pi u)}{e^{\frac{2\pi u}{\sqrt{3}x}}-1}du \\
    &=\, \frac{8\pi}{3x}\int_{0}^{\infty} \frac{e^{-\frac{\pi u}{\sqrt{3}}}\sin(\pi u)}{2} du + \frac{8\pi}{3x}\int_{0}^{\infty}\frac{e^{-\frac{\pi u}{\sqrt{3}}}\sin(\pi u)}{e^{\frac{2\pi u}{\sqrt{3}x}}-1}du \\
    &=\, \frac{8\pi}{3x}\int_{0}^{\infty} \left( \frac{1}{2} +\frac{1}{e^{\frac{2\pi u}{\sqrt{3}x}}-1} \right) e^{-\frac{\pi u}{\sqrt{3}}}\sin(\pi u) du \\
    &=\, \frac{4\pi}{3x}\int_{0}^{\infty}  \frac{e^{\frac{2\pi u}{\sqrt{3}x}}+1}{e^{\frac{2\pi u}{\sqrt{3}x}}-1}  e^{-\frac{\pi u}{\sqrt{3}}}\sin(\pi u) du \\
    &=\, \frac{4\pi}{3x}\int_{0}^{\infty} \coth\!\left( \tfrac{\pi u}{\sqrt{3} x}\right) e^{-\frac{\pi u}{\sqrt{3}}}\sin(\pi u) du.
\end{align*}
The variation in the sign of the sine function in the latter expression is not convenient. Hence, let us remove this variation by writing
\begin{align*}
    F_2(x) \,&=\, \frac{4\pi}{3x}\int_{0}^{\infty} \coth\!\left( \tfrac{\pi u}{\sqrt{3} x}\right) e^{-\frac{\pi u}{\sqrt{3}}}\sin(\pi u) du \\
    &=\, \frac{4\pi}{3x}\sum_{k=0}^\infty \int_{0}^{1} \coth\!\left( \tfrac{\pi (u+k)}{\sqrt{3} x}\right) e^{-\frac{\pi (u+k)}{\sqrt{3}}}\sin(\pi (u+k)) du \\
    &=\, \frac{4\pi}{3x}\sum_{k=0}^\infty (-1)^k \!\int_{0}^{1} \coth\!\left( \tfrac{\pi (u+k)}{\sqrt{3} x}\right) e^{-\frac{\pi (u+k)}{\sqrt{3}}}\sin(\pi u) du.
\end{align*}
Now, to get rid of the oscillation from the $\pm$ sign, we write
\begin{equation*}
    F_2(x) = \frac{4\pi}{3}\sum_{k=0}^{\infty}\int_{0}^{1}\left(\frac{\coth\!\left(\frac{\pi\left(u+2k\right)}{\sqrt{3}x}\right)}{xe^{\frac{\pi\left(u+2k\right)}{\sqrt{3}}}}-\frac{\coth\!\left(\frac{\pi\left(u+2k+1\right)}{x\sqrt{3}}\right)}{xe^{\frac{\pi\left(u+2k+1\right)}{\sqrt{3}}}}\right)\sin(\pi u) du.
\end{equation*}
Finally, differentiating $F_2$ using this form yield
\begin{equation*}
    F_2'(x) = \frac{2\pi}{3x^2} \sum_{k=0}^{\infty}\int_{0}^{1}\left( g\!\left( \tfrac{2\pi\left(u+2k+1\right)}{\sqrt{3}x} \right)e^{-\frac{\pi}{\sqrt{3}}} - g\!\left( \tfrac{2\pi\left(u+2k\right)}{\sqrt{3}x}\right) \right)e^{-\frac{\pi(u+2k)}{\sqrt{3}}}\sin(\pi u)du,
\end{equation*}
where $g(z)=\left(\sinh(z)-z\right)\operatorname{csch}^{2}\!\left(\frac{z}{2}\right)$. To simplify the writing, define $\smash{\zeta := \frac{2\pi(u+2k)}{\sqrt{3}x}}$ and $\Delta:= \smash{\frac{2\pi}{\sqrt{3}x}}$, and note that we have
\begin{equation*}
    F_2'(x) = \frac{2\pi}{3x^2} \sum_{k=0}^{\infty}\int_{0}^{1} \!\!\left( \frac{g\!\left( \zeta +\Delta \right)-g\!\left( \zeta \right)}{\Delta} \Delta e^{-\frac{\pi}{\sqrt{3}}} - \big(1-e^{-\frac{\pi}{\sqrt{3}}}\big)g( \zeta) \!\right)e^{-\frac{\zeta x}{2}}\sin(\pi u)du.
\end{equation*}
By the mean value theorem, there exists a $c\in[\zeta,\zeta+\Delta]$ such that 
\begin{equation*}
    F_2'(x) = \frac{2\pi}{3x^2} \sum_{k=0}^{\infty}\int_{0}^{1}\left( g'(c) \Delta e^{-\frac{\pi}{\sqrt{3}}} - \big(1-e^{-\frac{\pi}{\sqrt{3}}}\big)g( \zeta) \right)e^{-\frac{\zeta x}{2}}\sin(\pi u)du.
\end{equation*}
Using \Cref{lem - ineq3}, we find that $g'(z) = \left(z\coth\!\left(\frac{z}{2}\right)-2\right) \operatorname{csch}^2\!\left(\frac{z}{2}\right) \leq e^{-\frac{z}{3}}$ and thus we have
\begin{equation*}
    F_2'(x) \leq \frac{2\pi}{3x^2} \sum_{k=0}^{\infty}\int_{0}^{1}\left( \Delta e^{-\frac{\pi}{\sqrt{3}}-\frac{c}{3}} - \big(1-e^{-\frac{\pi}{\sqrt{3}}}\big)g( \zeta) \right)e^{-\frac{\zeta x}{2}}\sin(\pi u)du.
\end{equation*}
Now, since $\Delta e^{-\frac{\pi}{\sqrt{3}}} =\frac{2\pi}{\sqrt{3}x}e^{-\frac{\pi}{\sqrt{3}}}>0$, $e^{-\frac{x}{3}}$ is decreasing in $x$ and $c\in[\zeta,\zeta+\Delta]$, it follows that 
\begin{equation*}
    F_2'(x) \leq \frac{2\pi}{3x^2} \sum_{k=0}^{\infty}\int_{0}^{1}\left( \Delta e^{-\frac{\pi}{\sqrt{3}}-\frac{\zeta}{3}} - \big(1-e^{-\frac{\pi}{\sqrt{3}}}\big)g( \zeta) \right)e^{-\frac{\zeta x}{2}}\sin(\pi u)du.
\end{equation*}
Furthermore, by \Cref{lem - ineq1} we also have
\begin{equation*}
    F_2'(x) \leq \frac{2\pi}{3x^2} \sum_{k=0}^{\infty}\int_{0}^{1}\left( \Delta e^{-\frac{\pi}{\sqrt{3}}-\frac{\zeta}{3}}  - \big(1-e^{-\frac{\pi}{\sqrt{3}}}\big)\big(2-2e^{-\frac{\zeta}{3}}\big) \right)e^{-\frac{\zeta x}{2}}\sin(\pi u)du.
\end{equation*}
Therefore, if we also define $\gamma := 1-e^{-\frac{\pi}{\sqrt{3}}}$ and $\omega:=\frac{\pi}{\sqrt{3}}\left(1+\frac{2}{3x}\right)$, we find by a second application of Fubini's Theorem that
\begin{align*}
    F_2'(x) &\leq \frac{2\pi}{3x^2} \sum_{k=0}^{\infty}\int_{0}^{1}\left( \Delta e^{-\frac{\pi}{\sqrt{3}}-\frac{\zeta}{3}}  - \big(1-e^{-\frac{\pi}{\sqrt{3}}}\big)\big(2-2e^{-\frac{\zeta}{3}}\big) \right)e^{-\frac{\zeta x}{2}}\sin(\pi u)du \\
    &= \frac{4\pi}{3x^{2}} \!\int_{0}^{1} \!\left(\!\left(\gamma+\tfrac{1}{2} \Delta e^{\frac{-\pi}{\sqrt{3}}}\right)\sum_{k=0}^{\infty}e^{-\frac{\zeta}{3}-\frac{\zeta x}{2}}-\gamma\sum_{k=0}^{\infty}e^{-\frac{\zeta x}{2}}\right)\sin(\pi u)du \\
    &= \frac{4\pi}{3x^{2}} \!\int_{0}^{1} \!\left(\!\left(\gamma+\tfrac{1}{2} \Delta e^{\frac{-\pi}{\sqrt{3}}}\right)e^{-\omega u}\sum_{k=0}^{\infty}e^{-2\omega k}-\gamma e^{-\frac{\pi u}{\sqrt{3}}}\sum_{k=0}^{\infty}e^{-\frac{2\pi k}{\sqrt{3}}}\right)\sin(\pi u)du \\
    &= \frac{4\pi}{3x^{2}} \!\int_{0}^{1} \!\Bigg(\frac{\gamma+\tfrac{1}{2} \Delta e^{\frac{-\pi}{\sqrt{3}}}}{1-e^{-2\omega}}e^{-\omega u}-\frac{\gamma e^{-\frac{\pi u}{\sqrt{3}}}}{1-e^{-\frac{2\pi}{\sqrt{3}}}} \Bigg)\sin(\pi u)du \\
    &= \frac{4\pi}{3x^{2}}  \Bigg(\frac{\gamma+\tfrac{1}{2} \Delta e^{\frac{-\pi}{\sqrt{3}}}}{1-e^{-2\omega}} \int_{0}^{1} e^{-\omega u}\sin(\pi u)du-\frac{\gamma}{1-e^{-\frac{2\pi}{\sqrt{3}}}} \int_{0}^{1}e^{-\frac{\pi u}{\sqrt{3}}}\sin(\pi u)du \Bigg).
\end{align*}
It is a matter of simple computations to verify that $\int_{0}^{1}e^{-au}\sin(\pi u)du=\frac{\pi\left(1+e^{-a}\right)}{a^{2}+\pi^{2}}$ for any $a\in\mathbb{R}$. Hence, we finally obtain
\begin{align*}
    F_2'(x) \,&\leq\, \frac{4\pi}{3x^{2}}  \!\left(\frac{\gamma+\tfrac{1}{2} \Delta e^{\frac{-\pi}{\sqrt{3}}}}{1-e^{-2\omega}} \int_{0}^{1} e^{-\omega u}\sin(\pi u)du-\frac{\gamma}{1-e^{-\frac{2\pi}{\sqrt{3}}}} \int_{0}^{1}e^{-\frac{\pi u}{\sqrt{3}}}\sin(\pi u)du \right) \\
    &=\, \frac{4\pi}{3x^{2}}  \!\left(\frac{\gamma+\tfrac{1}{2} \Delta e^{\frac{-\pi}{\sqrt{3}}}}{1-e^{-2\omega}} \frac{\pi\left(1+e^{-\omega}\right)}{\omega^{2}+\pi^{2}}-\frac{\gamma}{1-e^{-\frac{2\pi}{\sqrt{3}}}} \frac{\pi\big(1+e^{\frac{-\pi }{\sqrt{3}}}\big)}{\big(\frac{\pi}{\sqrt{3}}\big)^{\!2}+\pi^{2}} \right) \\
    &=\, \frac{1}{x^{2}}\Bigg(\frac{e^{\frac{\pi}{\sqrt{3}}}+\frac{\pi }{\sqrt{3}x}-1}{e^{\frac{\pi}{\sqrt{3}}}-e^{\frac{-2\pi}{3\sqrt{3}x}}}\frac{4}{\left(1+\frac{2}{3x}\right)^{2}+3}-1\Bigg).
\end{align*}

\smallskip
\subsubsection{Estimation of the derivative of \texorpdfstring{$R_x(f_1)$}{Rx(f1)}}

To show that $R_n(f_1)$ is increasing in $n$, first note that $R_1(f_1) = 1 \leq \frac{7}{6} = R_2(f_1) $. Hence, we can suppose that $n\geq 2$ and thus, in the following, we will assume that $x\geq 2$ and show that $\frac{d}{dx} R_x(f_1) \geq 0$. In the previous sections, we showed that 
\begin{gather}
    \frac{d}{dx} R_x(f_1) \,=\, F_1'(x)-F_2'(x) , \\
    F_1'(x) \,\geq\, -80e^{-\sqrt{3}\pi x}, \\
    F_2'(x) \,\leq\, \frac{1}{x^{2}}\Bigg(\frac{e^{\frac{\pi}{\sqrt{3}}}+\frac{\pi }{\sqrt{3}x}-1}{e^{\frac{\pi}{\sqrt{3}}}-e^{\frac{-2\pi}{3\sqrt{3}x}}}\frac{4}{\left(1+\frac{2}{3x}\right)^{2}+3}-1\Bigg).
\end{gather}
Therefore,
\begin{equation*}
    \frac{d}{dx} R_x(f_1) \,\geq\, -\frac{80}{e^{\sqrt{3}\pi x}} - \frac{1}{x^{2}}\Bigg(\frac{e^{\frac{\pi}{\sqrt{3}}}+\frac{\pi }{\sqrt{3}x}-1}{e^{\frac{\pi}{\sqrt{3}}}-e^{\frac{-2\pi}{3\sqrt{3}x}}}\frac{4}{\left(1+\frac{2}{3x}\right)^{2}+3}-1\Bigg).
\end{equation*}
In order to further simplify the expression on the right-hand side until we are able to show that it is positive, we will use the following numerical inequalities:
\begin{gather*}
    \mathbf{(1)}~\frac{2\sqrt{3}e^{\frac{\pi}{\sqrt{3}}}}{\sqrt{3}+\pi}>4,\quad 
    \mathbf{(2)}~\frac{9(e^{\frac{\pi}{\sqrt{3}}}-1)}{\pi(\sqrt{3}+\pi)}>3,\quad
    \mathbf{(3)}~\frac{27\cdot3^{5}\cdot\pi^{10}}{800\cdot10!}>\frac{1}{5}.
\end{gather*}
Moreover, we will also use the following well-known inequalities (valid for any $x\geq 0$)
\begin{gather*}
    \mathbf{(4)}~e^x\geq \frac{x^n}{n!},\qquad 
    \mathbf{(5)}~e^{-x}<\frac{1}{x+1}.
\end{gather*}
Thus, using $\mathbf{(5)}$ we find
\begin{align*}
    \frac{d}{dx} R_x(f_1) &\geq -\frac{80}{e^{\sqrt{3}\pi x}} - \frac{1}{x^{2}}\Bigg(\frac{e^{\frac{\pi}{\sqrt{3}}}+\frac{\pi }{\sqrt{3}x}-1}{e^{\frac{\pi}{\sqrt{3}}}-e^{\frac{-2\pi}{3\sqrt{3}x}}}\frac{4}{\left(1+\frac{2}{3x}\right)^{2}+3}-1\Bigg) \\
    &\geq -\frac{80}{e^{\sqrt{3}\pi x}} -\frac{1}{x^{2}}\!\left(\frac{e^{\frac{\pi}{\sqrt{3}}}+\frac{\pi }{\sqrt{3}x}-1}{e^{\frac{\pi}{\sqrt{3}}}-\frac{1}{\frac{2\pi}{3\sqrt{3}x}+1}}\frac{4}{\left(1+\frac{2}{3x}\right)^{2}+3}-1\right) \\
    &= -\frac{80}{e^{\sqrt{3}\pi x}}-\frac{1}{x^{2}}\!\left(\!\Bigg(1+\frac{1+\frac{2\pi}{\sqrt{3}x}}{2e^{\frac{\pi}{\sqrt{3}}}+\frac{3\sqrt{3}}{\pi}\big(e^{\frac{\pi}{\sqrt{3}}}-1\big)x}\Bigg)\frac{4}{\left(1+\frac{2}{3x}\right)^{2}+3}-1\right)\!.
\end{align*}
Moreover, since we assumed that $x\geq 2$, we also have 
\begin{align*}
    \frac{d}{dx} R_x(f_1) \,&\geq\, -\frac{80}{e^{\sqrt{3}\pi x}}-\frac{1}{x^{2}}\!\left(\!\Bigg(1+\frac{1+\frac{2\pi}{\sqrt{3}x}}{2e^{\frac{\pi}{\sqrt{3}}}+\frac{3\sqrt{3}}{\pi}\big(e^{\frac{\pi}{\sqrt{3}}}-1\big)x}\Bigg)\frac{4}{\left(1+\frac{2}{3x}\right)^{2}+3}-1\right) \\
    &\geq\, -\frac{80}{e^{\sqrt{3}\pi x}}-\frac{1}{x^{2}}\!\left(\!\Bigg(1+\frac{1+\frac{\pi}{\sqrt{3}}}{2e^{\frac{\pi}{\sqrt{3}}}+\frac{3\sqrt{3}}{\pi}\big(e^{\frac{\pi}{\sqrt{3}}}-1\big)x}\Bigg)\frac{4}{\left(1+\frac{2}{3x}\right)^{2}+3}-1\right) \\
    &=\, -\frac{80}{e^{\sqrt{3}\pi x}}-\frac{1}{x^{2}}\!\left(\!\left(1+\frac{1}{\frac{2\sqrt{3}e^{\frac{\pi}{\sqrt{3}}}}{\sqrt{3}+\pi}+\frac{9(e^{\frac{\pi}{\sqrt{3}}}-1)}{\pi(\sqrt{3}+\pi)}x}\right)\frac{4}{\left(1+\frac{2}{3x}\right)^{2}+3}-1\right)\!.
\end{align*}
Furthermore, it follows from the inequalities $\mathbf{(1)}$ and $\mathbf{(2)}$ that 
\begin{align*}
    \frac{d}{dx} R_x(f_1) \,&\geq\,  -\frac{80}{e^{\sqrt{3}\pi x}}-\frac{1}{x^{2}}\!\left(\!\left(1+\frac{1}{\frac{2\sqrt{3}e^{\frac{\pi}{\sqrt{3}}}}{\sqrt{3}+\pi}+\frac{9(e^{\frac{\pi}{\sqrt{3}}}-1)}{\pi(\sqrt{3}+\pi)}x}\right)\frac{4}{\left(1+\frac{2}{3x}\right)^{2}+3}-1\right) \\
    &\geq\, -\frac{80}{e^{\sqrt{3}\pi x}}-\frac{1}{x^{2}}\!\left(\left(1+\frac{1}{4+3x}\right)\frac{4}{\left(1+\frac{2}{3x}\right)^{2}+3}-1\right) \\
    &=\, -\frac{80}{e^{\sqrt{3}\pi x}}+\frac{15x+4}{x^{2}\left(3x+4\right)\left(9x^{2}+3x+1\right)}.
\end{align*}
Using once again the fact that $x\geq 2$ and the fact that $\frac{15x+4}{3x+4}$ is an increasing function, we find that 
\begin{align*}
    \frac{d}{dx} R_x(f_1) \,&\geq\, -\frac{80}{e^{\sqrt{3}\pi x}}+\frac{15x+4}{x^{2}\left(3x+4\right)\left(9x^{2}+3x+1\right)} \\
    &\geq\, -\frac{80}{e^{\sqrt{3}\pi x}}+\frac{15\cdot2+4}{x^{2}\left(3\cdot2+4\right)\left(9x^{2}+3x+1\right)} \\
    &=\, \frac{80}{e^{\sqrt{3}\pi x}}\left(\frac{17e^{\sqrt{3}\pi x}}{400x^{2}\left(9x^{2}+3x+1\right)}-1\right).
\end{align*}
Let us now use the inequalities $\mathbf{(3)}$ and $\mathbf{(4)}$, with $n=8$, to obtain 
\begin{align*}
    \frac{d}{dx} R_x(f_1) \,&\geq\, \frac{80}{e^{\sqrt{3}\pi x}}\left(\frac{17e^{\sqrt{3}\pi x}}{400x^{2}\left(9x^{2}+3x+1\right)}-1\right) \\
    &\geq\, \frac{80}{e^{\sqrt{3}\pi x}}\left(\frac{17\left(\sqrt{3}\pi x\right)^{8}}{400\cdot 8!\cdot x^{2}\left(9x^{2}+3x+1\right)}-1\right) \\
    &=\, \frac{80}{e^{\sqrt{3}\pi x}}\left(\frac{17\cdot3^{4}\cdot\pi^{8}x^{6}}{400\cdot8!\cdot\left(9x^{2}+3x+1\right)}-1\right) \\
    &\geq\, \frac{80}{e^{\sqrt{3}\pi x}}\left(\frac{4x^{6}}{5\left(9x^{2}+3x+1\right)}-1\right).
\end{align*}
Therefore, to show that $\frac{d}{dx} R_x(f_1)\geq 0$ for each $x\geq 2$, it suffices to prove that $\frac{4x^{6}}{5\left(9x^{2}+3x+1\right)} \geq 1$ for each $x\geq 2$. To do this, simply observe that the derivative of the rational term on the right-hand side of the above inequality is equal to
\[
\frac{12x^{2}\left(12x^{2}+5x+2\right)}{5\left(9x^{2}+3x+1\right)^{2}} \,>\,0.
\]
Hence, the rational term is increasing and we thus have
\begin{align*}
    \frac{4x^{6}}{5\left(9x^{2}+3x+1\right)} \,\geq\, \frac{4\cdot2^{6}}{5\left(9\cdot2^{2}+3\cdot2+1\right)} \,=\, \frac{256}{215} \,>\, 1.
\end{align*}
Therefore, it finally follows that
\begin{align*}
    \frac{d}{dx} R_x(f_1) \,\geq\, 
 \frac{80}{e^{\sqrt{3}\pi x}}\left(\frac{4x^{6}}{5\left(9x^{2}+3x+1\right)}-1\right) \,\geq\, 0
\end{align*}
for each $x\geq 2$. In particular, we finally find that $R_n(f_1)$ is a monotonically increasing function in $n$, for all $n\geq 1$, which concludes the proof of \Cref{thm - fb_1}.

\section{Concluding remarks}

\begin{enumerate}
    \item Statement 3 of \Cref{thm - weird} is more general than Statements 1,2 and 4. This suggests that the latter could possibly be generalized in a similar way to the former. Does such a generalization exist, and if not, why is the case of Statement 3 special?
    \item Can the general theorems about the monotonicity of the left and right Riemann sums in this paper be generalized in the same way as \Cref{thm - weird}? If so, which one?
    \item Can we prove or disprove \Cref{conj}?
\end{enumerate}

\bibliographystyle{plain}
\bibliography{ref}

\end{sloppypar}
\end{document}